\theoremstyle{plain}
\newtheorem{thm}{Theorem}[section]
\newtheorem{cor}[thm]{Corollary}
\newtheorem{lemma}[thm]{Lemma}
\newtheorem{prop}[thm]{Proposition}
\newtheorem{Example}[thm]{Example}
\theoremstyle{definition}
\theoremstyle{remark}
\newtheorem{rmk}[thm]{Remark}
\newcommand{\BC}{{\mathbb{C}}}
\newcommand{\BP}{{\mathbb{P}}}
\newcommand{\BQ}{{\mathbb{Q}}}
\newcommand{\BZ}{{\mathbb{Z}}}
\newcommand{\CC}{{\mathcal C}}
\newcommand{\CE}{{\mathcal E}}
\newcommand{\CF}{{\mathcal F}}
\newcommand{\CH}{{\mathcal H}}
\newcommand{\CI}{{\mathcal I}}
\newcommand{\CL}{{\mathcal L}}
\newcommand{\CM}{{\mathcal M}}
\newcommand{\CO}{{\mathcal O}}
\newcommand{\CP}{{\mathcal P}}
\newcommand{\CX}{{\mathcal X}}
\DeclareFontFamily{OT1}{rsfs}{}
\DeclareFontShape{OT1}{rsfs}{n}{it}{<-> rsfs10}{}
\DeclareMathAlphabet{\curly}{OT1}{rsfs}{n}{it}
\begin{document}
\title[Perverse filtrations, Hilbert schemes, and $P=W$]
{Perverse filtrations, Hilbert schemes, and the $P=W$ Conjecture for parabolic Higgs bundles}
\date{\today}

\author{Junliang Shen}
\address{MIT, Department of Mathematics}
\email{jlshen@mit.edu}

\author{Zili Zhang}
\address{University of Michigan, Department of Mathematics}
\email{ziliz@umich.edu}

\begin{abstract}
We prove de Cataldo--Hausel--Migliorini's $P=W$ conjecture in arbitrary rank for parabolic Higgs bundles labeled by the affine Dynkin diagrams $\tilde{A}_0$, $\tilde{D}_4$, $\tilde{E}_6$, $\tilde{E}_7$, and $\tilde{E}_8$. Our proof relies on the study of the tautological classes on the Hilbert scheme of points on an elliptic surface with respect to the perverse filtration.
\end{abstract}
\baselineskip=14.5pt
\maketitle

\setcounter{tocdepth}{1} 

\tableofcontents
\setcounter{section}{-1}

\section{Introduction}

\subsection{Perverse filtrations}
Let $D_c^b(Y)$ denote the bounded derived category of constructible sheaves on a nonsingular algebraic variety $Y$. The truncation functor
\[
^\mathfrak{p}\tau_{\le k}: D_c^b(Y) \rightarrow D_c^b(Y)
\]
with respect to the perverse $t$-structure on $D_c^b(Y)$ (see \cite{BBD}) induces a natural morphism
\begin{equation}\label{trun0}
^\mathfrak{p}\tau_{\le k}\CC \rightarrow \CC
\end{equation}
for any object $\CC \in D_c^b(Y)$ and $k \in \BZ$.

Let $f:  X\to Y$ be a proper morphism between two irreducible nonsingular algebraic varieties of relative dimension $r$. The morphism
\[
^\mathfrak{p}\tau_{\le k}Rf_\ast \BQ_X \rightarrow Rf_\ast \BQ_X
\]
given by (\ref{trun0}) induces a morphism of (hyper-)cohomology groups,
\begin{equation}\label{perv_filt}
H^{d-(\mathrm{dim}X -r)}\Big{(}Y, ~~~~~ ^\mathfrak{p}\tau_{\le k}(Rf_\ast \BQ_X[\mathrm{dim}X -r]) \Big{)} \rightarrow H^d(X, \BQ).
\end{equation}
Following \cite{dCHM1}, we define $P^f_kH^d(X, \BQ) \subset H^d(X, \BQ)$ to be the image of (\ref{perv_filt}),\footnote{Here the shift $[\mathrm{dim}X-r]$ is to ensure that the perverse filtration is concentrated in the degrees $[0, 2r]$; see Section 1.} and we call the increasing filtration 
\begin{equation}\label{balabalah}
    P^f_0H^d(X,\BQ) \subset P^f_1H^d(X,\BQ) \subset \dots \subset  H^d(X, \BQ)
\end{equation}
the \emph{perverse filtration} associated with the map $f$. The perverse filtration (\ref{balabalah}) is called \emph{multiplicative} if
\[
P^f_kH^\ast(X, \BQ) \cup P^f_{k'}H^\ast(X, \BQ) \subset P^f_{k+k'}H^\ast(X, \BQ)
\] 
for the cup product $\cup$ and any $k, k' \geq 0$.

We refer to \cite{BBD, dCM0, dCM1} for more details about derived categories of constructible sheaves and perverse $t-$structures. See also Section 1 for discussions on perverse filtrations.

\subsection{Hilbert schemes of points on elliptic surfaces} We first locate tautological cohomology classes for the Hilbert scheme of points on an elliptic surface with respect to the natural perverse filtration.\footnote{The assumption that the surface admits an elliptic fibration is essential. See Remark \ref{rmkkkk}.}

Let $S$ be an irreducible nonsingular projective surface fibered over a nonsingular curve $C$,
\[
\pi: S \rightarrow C,
\]
such that a general fiber of $\pi$ is an elliptic curve. We assume that
\[
\pi^{[n]}: S^{[n]} \rightarrow C^{(n)}.
\]
is the induced morphism between the Hilbert scheme $S^{[n]}$ of $n$ points on $S$ and the symmetric powers $C^{(n)}$. Let $\bar{\pi}^{[n]}$ be the Cartesian product 
\[
\bar{\pi}^{[n]}= \pi^{[n]}\times \pi: S^{[n]} \times S \rightarrow C^{(n)}\times C.
\]

\begin{thm}\label{thm1}
Assume the perverse filtration associated with $\pi^{[n]}$ is multiplicative for any $n \geq 0$. We have
\begin{equation}\label{univ_class}
c_{k}(\CO_{Z_n}) \in P^{\bar{\pi}^{[n]}}_{k}H^{2k}(S^{[n]}\times S, \BQ)
\end{equation}
where $Z_n \subset S^{[n]}\times S$ is the universal subscheme. 
\end{thm}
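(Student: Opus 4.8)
The plan is to reduce the statement to the diagonal case $n=1$ together with an estimate on Todd classes, and then to bootstrap to general $n$ by induction, using throughout the multiplicative behaviour of the perverse filtration under products. First I would record the product (Künneth) formula for the perverse filtration of $\bar{\pi}^{[n]}=\pi^{[n]}\times\pi$. Since $R\bar{\pi}^{[n]}_\ast\BQ = R\pi^{[n]}_\ast\BQ \boxtimes R\pi_\ast\BQ$ and perverse truncation is compatible with external products (the perverse degree of an external tensor product adds), one obtains
\[
P^{\bar{\pi}^{[n]}}_kH^\ast(S^{[n]}\times S,\BQ)=\sum_{a+b=k}P^{\pi^{[n]}}_aH^\ast(S^{[n]},\BQ)\boxtimes P^\pi_bH^\ast(S,\BQ).
\]
Granting that the perverse filtration of the map to the curve $\pi$ is multiplicative (a standard fact for maps to curves, where the perverse and Leray filtrations agree up to an affine reindexing) and that $\pi^{[n]}$ is multiplicative by hypothesis, this convolution formula shows that $P^{\bar{\pi}^{[n]}}$ is itself multiplicative. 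Because $c_k$ is a weighted-homogeneous polynomial of weight $k$ in $\mathrm{ch}_1,\dots,\mathrm{ch}_k$, multiplicativity then reduces the theorem to the cleaner claim that $\mathrm{ch}_k(\CO_{Z_n})\in P^{\bar{\pi}^{[n]}}_kH^{2k}(S^{[n]}\times S,\BQ)$ for all $k\ge 0$.

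Next I would treat the base case $n=1$, where $Z_1=\Delta\subset S\times S$ is the diagonal. By Grothendieck--Riemann--Roch $\mathrm{ch}(\CO_\Delta)=\Delta_\ast(\mathrm{td}(T_S)^{-1})$, so by the projection formula $\mathrm{ch}_k(\CO_\Delta)=[\Delta]\cup \mathrm{pr}_1^\ast\beta_{k-2}$ with $\beta_{k-2}=(\mathrm{td}(T_S)^{-1})_{k-2}\in H^{2(k-2)}(S,\BQ)$. Two inputs combine here. On one hand, Poincaré self-duality of the perverse filtration for $\pi$ (of relative dimension $1$) gives $\mathrm{Gr}^P_pH^d(S)\cong \mathrm{Gr}^P_{2-p}H^{4-d}(S)$; applied to the Künneth components $[\Delta]=\sum_i e_i\otimes e_i^\vee$ this places each term in $P^\pi_{p_i}\boxtimes P^\pi_{2-p_i}$, so $[\Delta]\in P^{\pi\times\pi}_2$. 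On the other hand, since the fibers of $\pi$ are elliptic, $T_S$ restricts trivially to a general fiber $E$ (the normal bundle is trivial and $T_E=\CO_E$), hence every positive-degree component of $\mathrm{td}(T_S)^{-1}$ restricts to zero on fibers; this forces $\beta_{k-2}$ into perverse level $\le k-2$. Multiplicativity then yields $\mathrm{ch}_k(\CO_\Delta)=[\Delta]\cup\mathrm{pr}_1^\ast\beta_{k-2}\in P_{2+(k-2)}=P_k$. This step is exactly where the elliptic hypothesis is used.

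For general $n$ I would argue by induction, comparing $\CO_{Z_n}$ with $\CO_{Z_{n-1}}$ and the diagonal through the nested Hilbert scheme $S^{[n-1,n]}\subset S^{[n-1]}\times S^{[n]}$ together with its residual map to $S$. The guiding picture is that over the open locus of reduced divisors in $C^{(n)}$ the fibration $\pi^{[n]}$ is, on the ordered cover, a product of copies of $\pi$, and there $Z_n$ decomposes into a disjoint union of partial diagonals, each governed by the $n=1$ computation through the convolution formula. The inductive step propagates the bound $\mathrm{ch}_k\in P_k$ by writing the Chern-character recursion in terms of the tautological correspondences $S^{[n-1,n]}$—which are compatible with the finite ``add a point'' map $C^{(n-1)}\times C\to C^{(n)}$—and invoking multiplicativity of $\pi^{[m]}$ for $m\le n$ to keep the twist and the diagonal contributions at the correct perverse level.

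I expect the main obstacle to be precisely this last point. The product structure that makes the perversity estimate transparent holds only over the reduced locus of $C^{(n)}$, and the collision strata, where points of the configuration come together inside a fiber, are exactly where the perverse level can a priori jump. Controlling these strata is what forces the use of the multiplicativity hypothesis rather than a purely generic argument, and arranging the nested-Hilbert-scheme recursion so that it interacts cleanly with the convolution filtration $P^{\bar{\pi}^{[n]}}$ is the technical heart of the proof.
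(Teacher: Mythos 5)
Your overall strategy coincides with the paper's: reduce $c_k$ to $\mathrm{ch}_k$ using multiplicativity, establish the $n=1$ case via Grothendieck--Riemann--Roch and the vanishing of $c_1(S)$ on elliptic fibers, and then induct on $n$ through Lehn's comparison of universal families on the nested Hilbert scheme $S^{[n,n+1]}$. The base case as you present it is correct; your justification that $[\Delta]\in P_2^{\pi\times\pi}$ via the orthogonality of the perverse filtration under Poincar\'e duality applied to the K\"unneth components of the diagonal is a valid (and slightly different) substitute for the paper's pushforward estimate $\mathfrak{p}^{\pi\times\pi}(\Delta_\ast\alpha)\leq \mathfrak{p}^{\pi}(\alpha)+2$.

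The genuine gap is exactly the point you defer to at the end: the inductive step requires knowing that the pullback $q_n^\ast$ along $S^{[n,n+1]}\to S^{[n]}\times S$ and the Gysin pushforward $p_{n+1\ast}$ along $S^{[n,n+1]}\to S^{[n+1]}$ are compatible with the perverse filtrations for the respective maps to (products of) symmetric powers of $C$. This is \emph{not} a formal consequence of properness or of multiplicativity --- it fails already for the blow-up of a point in $\BP^3$ over $\BP^3$, and for $\mathrm{pt}\to\BP^1$ over $\mathrm{pt}$ (Example \ref{Example}) --- so ``invoking multiplicativity of $\pi^{[m]}$ for $m\le n$'' cannot close the induction by itself. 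The paper supplies this via the semismallness of $S^{[n]}\to S^{(n)}$ and $S^{[n,n+1]}\to S^{(n)}\times S$: the canonical decompositions of G\"ottsche--S\"orgel and de Cataldo--Migliorini indexed by the strata $S^{(\nu)}$ and $S^{(\nu,j)}$ are matched under both correspondences (Propositions \ref{prop1.4} and \ref{prop1.5}, Corollary \ref{functoriality}), because perverse sheaves with distinct irreducible supports admit no nonzero morphisms. Your stratification of $C^{(n)}$ by collision type is the right intuition, but the argument has to be run through these decompositions rather than generically. A second, smaller omission: controlling the twist $\CO_{S^{[n,n+1]}}(-E_{n+1})$ in Lehn's exact sequence requires an actual perversity bound on $E_{n+1}$, which the paper obtains from the identity $E_{n+1}=\tfrac{1}{2}\bigl(p_{n+1}^\ast\partial S^{[n+1]}-p_n^\ast\partial S^{[n]}\bigr)$ together with the computation $\mathfrak{p}^{\pi^{[n]}}(\partial S^{[n]})=1$ (Lemma \ref{perv_exc}); your proposal does not address where the class of $E_{n+1}$ sits in the filtration.
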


It was shown in \cite{LQW} that the cohomology ring $H^\ast(S^{[n]}, \BQ)$ is generated by the tautological classes --- the K\"unneth factors of $c_k(\CO_{Z_n})$ in $H^\ast(S^{[n]}, \BQ)$. Hence Theorem \ref{thm1} provides a complete description of the perverse filtration associated with $\pi^{[n]}$ via the tautological classes. 

\begin{cor}\label{cor1}
Let $\pi: S\rightarrow \BP^1$ be an elliptic $K3$ surface, then (\ref{univ_class}) holds.
\end{cor}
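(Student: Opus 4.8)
The plan is to deduce the corollary from Theorem~\ref{thm1} by verifying its single hypothesis in the elliptic $K3$ case, namely that the perverse filtration associated with $\pi^{[n]}$ is multiplicative for every $n \ge 0$. The reason for restricting to an elliptic $K3$ surface is precisely that here this hypothesis can be discharged using hyperk\"ahler geometry, whereas for a general elliptic surface it must be left as an assumption.

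First I would record the geometric setup. For $S$ a $K3$ surface the Hilbert scheme $S^{[n]}$ is an irreducible holomorphic symplectic manifold of dimension $2n$; moreover, since $h^{1,0}(S)=0$, the base of any elliptic fibration on $S$ is forced to be rational, which is why the hypothesis takes the form $\pi\colon S \to \BP^1$. The induced map $\pi^{[n]}\colon S^{[n]} \to (\BP^1)^{(n)} = \BP^n$ is then a Lagrangian fibration: over a general point corresponding to $n$ distinct values $t_1,\dots,t_n \in \BP^1$ with smooth fibers $E_{t_i}$, the fiber of $\pi^{[n]}$ is the abelian variety $E_{t_1}\times\cdots\times E_{t_n}$ of dimension $n=\tfrac12\dim S^{[n]}$, on which the holomorphic symplectic form vanishes.

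Next I would invoke multiplicativity of the perverse filtration for Lagrangian fibrations of projective holomorphic symplectic manifolds, which gives $P_k \cup P_{k'} \subset P_{k+k'}$ on $H^\ast(S^{[n]},\BQ)$ for all $k,k'$ and all $n$. The mechanism I have in mind combines the Lefschetz description of the perverse filtration (it is cut out by cup products with $\eta=(\pi^{[n]})^\ast\CO_{\BP^n}(1)$ and with an ample class on $S^{[n]}$, via relative hard Lefschetz) with the extra constraints coming from the symplectic form; the symplectic structure is what rigidifies the associated graded so that the cup product respects the filtration. With this established for all $n \ge 0$, the hypothesis of Theorem~\ref{thm1} is satisfied, and applying that theorem yields $c_k(\CO_{Z_n}) \in P^{\bar{\pi}^{[n]}}_{k}H^{2k}(S^{[n]}\times S,\BQ)$, which is exactly~(\ref{univ_class}).

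The main obstacle is the multiplicativity input itself. Unlike the purely geometric identification of the Lagrangian fibration, this statement is not formal: it fails for general fibrations and genuinely uses the hyperk\"ahler structure of $S^{[n]}$. Hence the heart of the argument is to secure multiplicativity for Lagrangian fibrations of projective hyperk\"ahler manifolds, either by citing it as a known result or by supplying the Lefschetz-plus-symplectic argument sketched above; once that is in place the corollary is an immediate specialization of Theorem~\ref{thm1}.
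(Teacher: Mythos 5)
Your reduction is exactly the paper's: Corollary \ref{cor1} is deduced from Theorem \ref{thm1} by verifying its one hypothesis, namely that the perverse filtration associated with $\pi^{[n]}$ is multiplicative for every $n$. Where you diverge is in how that hypothesis is discharged, and this is where your argument has a genuine gap. The paper obtains multiplicativity by citing \cite{Z}, which proves that the perverse filtration on $H^\ast(S^{[n]},\BQ)$ associated with $\pi^{[n]}\colon S^{[n]}\to C^{(n)}$ is multiplicative whenever the fibered surface $S$ has numerically trivial canonical bundle; an elliptic $K3$ surface qualifies, so the corollary is a one-line specialization. The proof in \cite{Z} does not go through hyperk\"ahler geometry at all: it uses the explicit description of the cup product on $H^\ast(S^{[n]},\BQ)$ via the symmetric-power strata and Nakajima--Lehn operators --- the same machinery this paper upgrades in Proposition \ref{Prop1.8} to strong multiplicativity of the chosen splitting.

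Your proposed substitute --- relative hard Lefschetz for the Lagrangian fibration $S^{[n]}\to\BP^n$ plus the assertion that ``the symplectic structure rigidifies the associated graded'' --- is not a proof. The Lefschetz/flag description of the perverse filtration is available for any projective map and does not by itself yield multiplicativity, and the vanishing of the symplectic form on fibers does not supply the missing constraint in any formal way; multiplicativity for Lagrangian fibrations of hyperk\"ahler manifolds is a substantial theorem in its own right, not something one can wave in from the symplectic structure. So the ``main obstacle'' you flag is in fact the entire content of the corollary, and your sketch does not close it. The correct fix in the context of this paper is to cite the multiplicativity result of \cite{Z} for Hilbert schemes of fibered surfaces with numerically trivial canonical bundle (the relevant input for $S$ itself being \cite[Proposition 4.17]{Z}); with that citation in place, the remainder of your argument --- identifying the base as $\BP^n$, the general fiber as a product of elliptic curves, and then applying Theorem \ref{thm1} --- is correct, though note that the geometric identification of the fibration plays no role in the paper's proof.
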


\subsection{Moduli of parabolic Higgs bundles}
Our main motivation for the study of perverse filtrations for Hibert schemes is the $P=W$ conjecture \cite{dCHM1} for (parabolic) Higgs bundles; see Section 0.4. 

In \cite{Gr}, Gr\"ochenig described five infinite families of moduli spaces of parabolic Higgs bundles labeled by the affine Dynkin diagrams $\tilde{A}_0$, $\tilde{D}_4$, $\tilde{E}_6$, $\tilde{E}_7$, and $\tilde{E}_8$, which extends a result of Gorsky--Nekrasov--Rubtsov \cite{GNR}. For each Dynkin diagram above, there attached a sequence of moduli spaces
\[
\{ M_n \}_{n \geq 1}, \quad \mathrm{dim} M_n = 2n,
\]
which are associated with an elliptic curve $E$ with an action of a finite group $\Gamma$. The variety $M_n$ can be realized as either the moduli space of rank $n|\Gamma|$ stable Higgs bundles on the orbifold curve \footnote{Here $|\Gamma|$ denotes the size of the finite group $\Gamma$.}
\[
\BP_\Gamma = [E / \Gamma],
\]
or the moduli space of certain stable parabolic Higgs buundles on the coarse moduli space
\[
|\BP_\Gamma|=
\begin{cases}
 E, & \tilde{A}_0~~~~\textup{case};\\
 \BP^1, & \textup{the others}
\end{cases}
\]
of the same rank; see \cite{Gr} for more details. Gr\"ochenig further showed that $M_1$ is a nonsingular surface elliptically fibered over the affine line,
\[
\pi_1: M_1 \rightarrow \BC,
\]
and $M_n$ is the Hilbert scheme of $n$-points on $M_1$ with the Hitchin fibration
\[
\pi_n : M_n=M_1^{[n]} \xrightarrow{\pi_1^{[n]}} \BC^n.
\]
In Section 3, we introduce a canonical decomposition 
\begin{equation}\label{perverse_decomp}
H^\ast(M_n, \BQ) = \bigoplus_{i\geq 0}G_i H^\ast(M_n, \BQ)
\end{equation}
splitting the perverse filtration on $H^\ast(M_n, \BQ)$ associated with $\pi_n$, \emph{i.e.},
\[P^{\pi_n}_k H^\ast(M_n, \BQ) = \bigoplus_{i\leq k}G_i H^\ast(M_n, \BQ).
\]
We call such a splitting the perverse decomposition. 

For an orbifold (or a Deligne--Mumford stack) $\CX$, we define $\CI\CX$ to be the corresponding inertia stack equiped with the canonical morphism $\CI\CX \to \CX$. We use $H_{\mathrm{orb}}^\ast(\BP_\Gamma, \BQ)$ to denote the cohomology $H^\ast(\CI\BP_\Gamma, \BQ)$. The following theorem concerns the precise locations of the tautological classes on $M_n$ with respect to the perverse decomposition (\ref{perverse_decomp}).

\begin{thm}\label{thm0.3}
Let $(\CE, \sigma)$ be the universal Higgs bundle over $M_n \times \CI\BP_\Gamma$ pulled back from $M_n \times \BP_\Gamma$. Then the following statements hold:
\begin{enumerate}
    \item[(i)] The tautological classes 
    \[
    \int_{\alpha} c_k(\CE) \in H^\ast(M_n, \BQ), \quad \alpha \in H_{\mathrm{orb}}^\ast(\BP_\Gamma, \BQ)
    \]
    generate the cohomology ring $H^\ast(M_n, \BQ)$.
    \item[(ii)] We have
    \[
    \int_{\alpha} c_k(\CE) \in  G_kH^\ast(M_n, \BQ) \subset P_k^{\pi_n} H^\ast(M_n, \BQ)
    \]
    for any $\alpha \in H_{\mathrm{orb}}^\ast(\BP_\Gamma, \BQ)$.
\end{enumerate}
\end{thm}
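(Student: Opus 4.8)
The plan is to reduce the statement to the Hilbert scheme of an elliptic surface and apply Theorem \ref{thm1}, transporting the resulting information about $c_\bullet(\CO_{Z_n})$ to the Higgs-theoretic tautological classes $\int_\alpha c_k(\CE)$ by means of a spectral (relative Fourier--Mukai) correspondence. First I would record that $M_n = M_1^{[n]}$ with $\pi_n = \pi_1^{[n]}$ is exactly the setting of Theorem \ref{thm1} for the elliptic surface $M_1 \to \BC$. Since $M_1$ is only quasi-projective over the affine base $\BC$, I would fix a projective elliptic compactification $\bar S \to \bar C$ of the fibration attached to the given affine Dynkin diagram, together with an open embedding $M_1 \hookrightarrow \bar S$ over an affine open $\BC \subset \bar C$, so that $M_n$ becomes the open subvariety of $\bar S^{[n]}$ lying over $\BC^{(n)}$ and $\pi_n$ is the restriction of the product map $\bar\pi^{[n]}$. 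The perverse filtration is compatible with restriction to the preimage of an open subset of the base, so it suffices to work on $\bar S^{[n]} \times \bar S$. Before invoking Theorem \ref{thm1} I would verify its hypothesis --- multiplicativity of the perverse filtration for $\pi_1^{[n]}$ --- quoting Corollary \ref{cor1} in the elliptic $K3$ cases and arguing from the surface geometry otherwise; granting this, Theorem \ref{thm1} gives $c_k(\CO_{Z_n}) \in P^{\bar\pi^{[n]}}_k H^{2k}(\bar S^{[n]} \times \bar S, \BQ)$, and hence the same membership after restriction to $M_n \times M_1$.

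The core is the spectral correspondence. Via the Beauville--Narasimhan--Ramanan construction applied in families over the Hitchin base, the universal Higgs bundle $\CE$ on $M_n \times \BP_\Gamma$ (pulled back to $M_n \times \CI\BP_\Gamma$) is produced from the universal subscheme $\CO_{Z_n}$ on $M_n \times M_1$ by a relative Fourier--Mukai transform $\Phi$ along the elliptic fibration followed by the finite spectral pushforward to $\BP_\Gamma$. I would make $\Phi$ precise and extract a Grothendieck--Riemann--Roch identity relating $\ch(\CE)$ to $\ch(\CO_{Z_n})$, together with an induced isomorphism $H^\ast_{\mathrm{orb}}(\BP_\Gamma, \BQ) \cong H^\ast(M_1, \BQ)$ under which integration against $\alpha$ corresponds to integration against its transform on $M_1$. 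Two features of $\Phi$ are decisive: on Chern gradings it shifts by the relative dimension of the fibration, matching $\ch_k(\CE)$ with $\ch_{k+1}(\CO_{Z_n})$ up to lower order; and on the perversely graded cohomology of the elliptic fibration it acts as the ``flip'' exchanging perverse degree $j$ with $2-j$. These two shifts are coordinated so that the product bound of Theorem \ref{thm1} is converted into a pure perverse degree on $M_n$.

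For part (i) I would combine this dictionary with the theorem of \cite{LQW}: the K\"unneth factors of $c_\bullet(\CO_{Z_n})$ generate $H^\ast(M_n, \BQ)$ by the method of \cite{LQW}, and under $\Phi$ these factors are identified with the classes $\int_\alpha c_k(\CE)$ as $\alpha$ ranges over a basis of $H^\ast_{\mathrm{orb}}(\BP_\Gamma, \BQ)$, so the latter generate as well. For part (ii) I would decompose $c_k(\CO_{Z_n}) = \sum_i \theta_i \boxtimes \gamma_i$ into its perversely bigraded K\"unneth pieces; writing $p_\pi$ for the perverse degree with respect to $\pi$, multiplicativity together with Theorem \ref{thm1} forces $p_{\pi_n}(\theta_i) + p_{\pi_1}(\gamma_i) \le k$ for each piece, and the canonical splitting lets one promote these to equalities on the graded level. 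Pairing against $\alpha$ retains only the pieces whose $\gamma_i$ carries the perverse degree complementary to $\alpha$; the flip property of $\Phi$ then pins the surviving $\theta_i$, after the coordinated Chern shift, precisely in $G_k H^\ast(M_n, \BQ)$, which is the assertion.

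The main obstacle I expect is the perverse-degree bookkeeping of the spectral correspondence, in two respects. First, one must prove that $\Phi$ has the claimed ``flip'' effect on the perverse filtration of the elliptic fibration and that its composition with the spectral pushforward to $\CI\BP_\Gamma$ shifts perverse degrees by exactly the amount needed to land in the single graded piece $G_k$, uniformly over the untwisted and all twisted sectors of $H^\ast_{\mathrm{orb}}(\BP_\Gamma, \BQ)$. Second, the Fourier--Mukai comparison between $\CE$ and $\CO_{Z_n}$ is transparent only in the rank-one case $n=1$, and one must lift it to $M_n = M_1^{[n]}$ for all $n$ compatibly with the induced map on cohomology and with the canonical splitting $G_\bullet$; upgrading the conclusion from the filtration $P_\bullet$ to the splitting $G_\bullet$ is the most delicate point, and is where multiplicativity and the relative hard Lefschetz structure underlying $G_\bullet$ are essential.
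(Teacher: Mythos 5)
Your global strategy---reduce to $M_n=M_1^{[n]}$, run the induction behind Theorem \ref{thm1}, and transport the result to $\int_\alpha c_k(\CE)$ through a Fourier--Mukai description of the universal family---is the paper's strategy in outline, and your part (i) is essentially right (the paper factors the universal functor as $D^b([E/\Gamma])\to D^b(M_1)\to D^b(M_n)$, invokes \cite{LQW} together with surjectivity of restriction from the compactification $\overline{M}_1^{[n]}$, and checks that the boundary classes lie in the kernel of the correspondence induced by $\CO_{\Xi_n}$). But for part (ii) there are two genuine gaps. First, the asserted behaviour of your spectral transform---a ``flip'' $j\mapsto 2-j$ on perverse degrees coordinated with a Chern shift matching $\mathrm{ch}_k(\CE)$ with $\mathrm{ch}_{k+1}(\CO_{Z_n})$---is neither established nor what actually happens. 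The paper computes the $n=1$ case directly (Proposition \ref{prop3.1}): with the trivial grading on $H^\ast_{\mathrm{orb}}(\BP_\Gamma,\BQ)$, the class $\mathrm{ch}_k(\CE_1)$ lands in $G_kH^{2k}(M_1\times\CI\BP_\Gamma,\BQ)$ (it is ``balanced''), via $[\mathrm{pt}]\mapsto[M_1]\in G_0$, twisted sectors $\mapsto[E_i]\in G_1$, $[\BP_\Gamma]\mapsto[s]\in G_2$; the passage to general $n$ is the convolution $I(\mathrm{ch}(\CE_1),[\overline{M}_1])$, where integrating against $[\overline{M}_1]$ extracts the K\"unneth factor of $[\mathrm{pt}]\in G_2H^4(\overline{M}_1,\BQ)$, so balancedness is preserved with no flip and no shift. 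Your degree bookkeeping would have to be proved, and as stated I do not believe it closes up.

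Second, and more seriously, Theorem \ref{thm1} only gives membership in the filtration $P_k$, and the step ``the canonical splitting lets one promote these to equalities on the graded level'' is not a valid inference: a class in $P_kH^{2k}$ may have components in $G_i$ for every $i\le k$, and neither multiplicativity nor relative hard Lefschetz removes them (the splitting $G_\bullet$ here is not the hard Lefschetz splitting in any case; it comes from the canonical G\"ottsche--Soergel and de Cataldo--Migliorini decompositions applied to a chosen splitting of $H^\ast(M_1,\BQ)$). The paper closes this gap by rerunning the entire induction of Section 2.3 at the level of $G_\bullet$ rather than $P_\bullet$ (Theorem \ref{Thm3.3}); this requires that $q_n^\ast$ and $p_{n+1\ast}$ preserve the splittings and not merely the filtrations (Propositions \ref{prop1.4} and \ref{prop1.5}), the exact location $\partial S^{[n]}\in G_1H^2$ (Lemma \ref{perv_exc}), and the strong multiplicativity of $G_\bullet$ (Lemma \ref{lem3.2}, via Proposition \ref{Prop1.8}, using that $M_1$ has trivial canonical bundle and a compactification with surjective restriction---note that $M_1$ is not a $K3$ surface, so Corollary \ref{cor1} does not apply to verify your multiplicativity hypothesis). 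Without this graded refinement your argument yields only $\int_\alpha c_k(\CE)\in P_k^{\pi_n}H^\ast(M_n,\BQ)$, which is strictly weaker than the statement and insufficient for the $P=W$ application.
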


\subsection{The P=W Conjecture} Simpson established in \cite{Simp} the non-abelian Hodge theorem for a curve of genus $\geq 2$ and the reductive group $\mathrm{GL}_n$, which gives a canonical diffeomorphism between the moduli space $\CM_{\mathrm{Dol}}$ of rank $n$ stable Higgs bundle and the corresponding character variety $\CM_B$ of rank $n$ stable local systems. 

A striking phenomenon was discovered by de Cataldo, Hausel, and Migliorini in \cite{dCHM1}, that the canonical isomorphism 
\[
H^\ast(\CM_B, \BQ) =  H^\ast(\CM_{\mathrm{Dol}}, \BQ)
\]
induced by the Simpson correspondence is \emph{expected} to identify the weight filtration $W_{2\bullet}H^\ast(\CM_B, \BQ)$ and the perverse filtration $P_\bullet H^\ast(\CM_{\mathrm{Dol}}, \BQ)$ associated with the Hitchin fibration, \emph{i.e.}
\[
W_{2k}H^\ast(\CM_{B}, \BQ)= W_{2k+1}H^\ast(\CM_{B}, \BQ) = P_kH^\ast(\CM_{\mathrm{Dol}}, \BQ), \quad k \geq 0.
\]
Such a phenomenon is refered to as ``the $P=W$ conjecture''. The original $P=W$ conjecture was verified for $n=2$ in \cite{dCHM1}, while the $n \geq 3$ cases are still open.

The main purpose of this paper is to provide a proof of the $P=W$ conjecture for the moduli spaces $M_n$ of parabolic Higgs bundles associated with the affine Dykin diagrams $\tilde{A}_0$, $\tilde{D}_4$, $\tilde{E}_6$, $\tilde{E}_7$, and $\tilde{E}_8$, as described in Section 0.3.

Let $\CM'_n$ be the character variety of stable parabolic local systems associated with $M_n$ via the correspondence \cite{Simp90}. The parabolic non-abelian Hodge theorem was proven in \cite{B1, B2}, which induces a canonical isomorphism of the cohomology groups
\begin{equation}\label{non_abel}
    H^\ast(\CM'_n, \BQ) = H^\ast(M_n, \BQ).
\end{equation}

\begin{thm}\label{P=W}
The $P=W$ conjecture holds for the five families of moduli spaces of parabolic Higgs bundles/local systems labeled by the Dykin diagrams $\tilde{A}_0$, $\tilde{D}_4$, $\tilde{E}_6$, $\tilde{E}_7$, and $\tilde{E}_8$, \emph{i.e.}, under the identification (\ref{non_abel}) we have
\[
W_{2k}H^\ast(\CM'_{n}, \BQ)= W_{2k+1}H^\ast(\CM'_{n}, \BQ) = P_kH^\ast(M_n, \BQ), \quad k \geq 0.
\]
\end{thm}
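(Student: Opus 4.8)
The plan is to use Theorem \ref{thm0.3} as the engine on the Dolbeault side and to match it with a parallel statement on the Betti side through the ring isomorphism (\ref{non_abel}). Since the parabolic non-abelian Hodge correspondence is a homeomorphism, (\ref{non_abel}) is an isomorphism of graded $\BQ$-algebras, so the entire theorem amounts to showing that this isomorphism carries the weight grading on $H^\ast(\CM'_n,\BQ)$ to the perverse splitting $G_\bullet$ of $H^\ast(M_n,\BQ)$, with weights doubled. First I would record that both structures are compatible with cup product: the weight filtration is multiplicative because the cup product is a morphism of mixed Hodge structures, and the perverse filtration for $\pi_n$ is multiplicative for these elliptic-surface Hilbert schemes (established in the course of proving Theorem \ref{thm0.3}), so that $P_k H^\ast = \bigoplus_{i\le k} G_i H^\ast$ with $P_a H^\ast \cup P_b H^\ast \subseteq P_{a+b} H^\ast$. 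Because $H^\ast(M_n,\BQ)$ is generated by the tautological classes (Theorem \ref{thm0.3}(i)), matching two multiplicative gradings reduces to matching them on this generating set, together with identifying the induced graded ring structures.

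On the Dolbeault side the placement of generators is exactly Theorem \ref{thm0.3}(ii): the tautological class $\int_\alpha c_k(\CE)$ lies in $G_k \subset P^{\pi_n}_k$. The corresponding Betti statement is the crux. Under non-abelian Hodge the universal Higgs bundle $(\CE,\sigma)$ and the universal parabolic local system on $\CM'_n \times \CI\BP_\Gamma$ share the same underlying topological bundle, so their Chern classes are interchanged by (\ref{non_abel}); hence $\int_\alpha c_k(\CE)$ is carried to the analogous tautological class $\int_\alpha c_k$ built from the universal local system. I would then prove, on the Betti side, that these tautological classes generate $H^\ast(\CM'_n,\BQ)$ and that the class matching $\int_\alpha c_k(\CE)$ is pure of weight $2k$. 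Purity of the cohomology of these character varieties, together with the fact that all occurring weights are even, simultaneously yields $W_{2k} = W_{2k+1}$ and turns the weight filtration into a genuine ring grading splitting $W_\bullet$.

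Granting the Betti generation and purity, the proof concludes formally. Both $\bigoplus_i G_i$ and the weight grading are multiplicative gradings on the same ring via (\ref{non_abel}), with homogeneous generators $\int_\alpha c_k(\CE) \in G_k$ matched to classes of weight $2k$; two multiplicative gradings determined by the degrees of a common set of generators must coincide, so $G_k$ corresponds to weight $2k$ for every $k$. Summing, $P_k H^\ast(M_n,\BQ) = \bigoplus_{i\le k} G_i$ corresponds to $\bigoplus_{w\le k}\mathrm{Gr}^W_{2w} = W_{2k} H^\ast(\CM'_n,\BQ) = W_{2k+1} H^\ast(\CM'_n,\BQ)$, which is the asserted equality. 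If one prefers to avoid the strong multiplicativity of $G_\bullet$, the same conclusion follows from the inclusion $P_k \subseteq W_{2k}$ — immediate from multiplicativity of $W$ and the placement of generators — upgraded to an equality by the dimension identity $\dim \mathrm{Gr}^P_k H^\ast(M_n,\BQ) = \dim \mathrm{Gr}^W_{2k} H^\ast(\CM'_n,\BQ)$ forced by purity and the matching of generators.

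The main obstacle is the Betti-side input: purity of $H^\ast(\CM'_n,\BQ)$ and the exact weight computation for the tautological classes, across a correspondence that is only real-analytic and a priori respects no Hodge-theoretic structure. Matching the Chern classes of the two universal objects is robust because it is purely topological, but pinning the weight down to exactly $2k$ requires genuine knowledge of the mixed Hodge structure of $\CM'_n$. I would extract this from the explicit models attached to the affine Dynkin data $\tilde A_0, \tilde D_4, \tilde E_6, \tilde E_7, \tilde E_8$: since $M_n = M_1^{[n]}$ is a Hilbert scheme, the Betti side likewise carries a Hilbert-scheme structure, so the weight computation for $n=1$ bootstraps to all $n$ via the G\"ottsche-type description of the cohomology, and purity for the surface $\CM'_1$ propagates to every $\CM'_n$.
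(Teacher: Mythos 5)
Your overall strategy is the paper's: identify the tautological classes on the two sides via the homeomorphism (both are pulled back from $\mathrm{EGL}_n$), place them in $G_k\subset P_k$ on the Dolbeault side by Theorem \ref{thm0.3}, place them in weight $2k$ and Hodge type $(k,k)$ on the Betti side, and then match two multiplicative gradings on a common set of ring generators. The gap is precisely in the step you yourself call the crux, and your proposed way of closing it would fail. First, a point of substance disguised as terminology: the cohomology of these character varieties is \emph{not} pure. Already in the $\tilde A_0$ case $\CM'_1\cong(\BC^\ast)^2$ has $H^1$ of weight $2$ and $H^2$ of weight $4$. What is true, and what the paper proves as the decomposition (\ref{weight_decomp}), is that every class is of Hodge--Tate type $(k,k)$ for some $k$, with $2k$ in general strictly larger than the cohomological degree; ``purity'' would collapse the weight filtration to the trivial one and there would be nothing to prove. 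Second, and more seriously, your mechanism for computing the weights --- bootstrapping from $\CM'_1$ to $\CM'_n$ via a ``Hilbert-scheme structure on the Betti side'' --- does not exist: $\CM'_n$ is an affine variety and is \emph{not} the Hilbert scheme of points on $\CM'_1$. The paper flags exactly this trap in Section 0.5 (the exchange-of-filtrations phenomenon for ${\CM'_1}^{[n]}$ is explicitly \emph{not} $P=W$).

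The input you are missing is Theorem \ref{thm3.4}, proved by Shende's method: the evaluation map $\CM'_n\times\Delta_{\CI\BP_\Gamma}\to\mathrm{BGL}_n$ induced by the universal local system is \emph{algebraic} on a simplicial resolution, hence a morphism of mixed Hodge structures, and Deligne's computation $c_k(\mathrm{EGL}_n)\in{}^k\mathrm{Hdg}^{2k}(\mathrm{BGL}_n)$ pulls back to give $\int_\alpha c_k(\CE'_n)\in{}^k\mathrm{Hdg}^\ast(\CM'_n)$; combined with generation (which does transfer for free from Theorem \ref{thm0.3}(i) through the homeomorphism) this yields the splitting (\ref{weight_decomp}). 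With that input your concluding formal argument is essentially the paper's Step 5, though note that your proposed shortcut ``$P_k\subseteq W_{2k}$ plus a dimension identity'' is not really an alternative to strong multiplicativity: the dimension identity $\dim\mathrm{Gr}^P_k=\dim\mathrm{Gr}^W_{2k}$ is itself deduced from the fact that both sides are the degree-$k$ pieces of multiplicative gradings with matched homogeneous generators, which is what Lemma \ref{lem3.2} and the multiplicativity of ${}^\bullet\mathrm{Hdg}$ provide.
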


In fact, we prove a refinement of Theorem \ref{P=W} in Section 3. For every character variety $\CM'_n$ above, we consider the following sub-vector space of $H^d(\CM'_n, \BQ)$:
\[
^k\mathrm{Hdg}^d(\CM'_n) =  W_{2k}H^d(\CM'_n, \BQ) \cap F^k H^d(\CM'_n, \BC) \cap \bar{F}^k H^d(\CM'_n, \BC).
\]
Using a similar argument as in \cite{Shende}, we show in Section 3.5 that the cohomology of the character variety $\CM'_n$ admits a canonical decomposition
\begin{equation}\label{weight_decomp}
H^\ast(\CM'_n, \BQ) = \bigoplus_{k,d}  {^k\mathrm{Hdg}^d(\CM'_n)}.
\end{equation}
The decomposition (\ref{weight_decomp}) is then shown to match the perverse decomposition (\ref{perverse_decomp}), which implies Theorem \ref{P=W} as a conclusion.

Our results provide examples of moduli of parabolic Higgs bundles where the $P=W$ conjecture holds for arbitrary rank. The construction of the perverse decomposition (\ref{perverse_decomp}) and Theorem \ref{thm0.3} play an essential role in the proof.

\subsection{Relation to other work}
An interesting phenomenon has been found in \cite{dCHM3} and extended in \cite[Section 4.4]{Z}, that the perverse filtration for $M_n = M_1^{[n]}$ matches the weight filtration for the Hilbert scheme ${\CM_1'}^{[n]}$. However, this exchange of filtrations is \emph{not} $``P=W"$, since a character variety is an affine variety which cannot be realized as the Hilbert scheme of points.

\subsection{Conventions} Throughout the paper, we work over the complex numbers $\BC$. For a projective variety $X$, a quasi-projective variety $Y$, and an object $\CF \in D^b(X\times Y)$, we say that a correspondence $g: H^\ast(X, \BQ) \to H^\ast(Y, \BQ)$ is induced by $\CF$ if $g$ is given by
\[
g(\alpha) = \mathrm{pr}_{Y\ast}\left(\mathrm{pr}^\ast_X (\alpha \cup \mathrm{td}_X)\cup \mathrm{ch}(\CF) \right).
\]
For $\gamma \in H^\ast(X, \BQ)$ and $\alpha \in H^\ast(X\times Y, \BQ)$, we use $\int_{\gamma} \alpha$ to denote the class
\[
\mathrm{pr}_{Y\ast}\left(\mathrm{pr}^\ast_X \gamma \cup \alpha \right) \in H^\ast(Y, \BQ).
\]

\subsection{Acknowledgement}
We are grateful to Tim-Henrik B\"ulles, Mark de Cataldo, Davesh Maulik, Rahul Pandharipande, and Qizheng Yin for discussions about Hilbert schemes, perverse filtrations, and Higgs bundles, and to Olivier Biquard for kindly pointing out references.

\section{Perverse decompositions for Hilbert schemes}

\subsection{Perverse filtrations} We define the perverse filtration associated with any proper morphism $f: X \rightarrow Y$ between nonsingular varieties.

Let 
\[
r(f)=\dim X\times_YX-\dim X
\]
be the defect of semismallness. In particular, we have $r(f) =r$ when $f$ has relative dimension $r$, and we have $r(f)=0$ if $f$ is semismall.

Similar to Section 0.1, we define the perverse filtration $P_k^f H^\ast(X, \BQ)$ by
\begin{multline*}
    P^f_kH^d(X,\BQ)   \\
     :=\mathrm{Im}\{{H}^{d-\dim X+r(f)}\Big{(}X,~~~~~ {^\mathfrak{p}\tau_{\le k}}(Rf_*\BQ_X[\dim X-r(f)])\Big{)} \to H^d(X,\BQ)\}.
\end{multline*}

Once we choose a decomposition 
\begin{equation}\label{decomp}
Rf_*\BQ_X[\dim X-r(f)]\cong \bigoplus_{i=0}^{2r(f)}\mathcal{P}_i[-i] \in D^b_c(Y)
\end{equation}
with $\mathcal{P}_i$ perverse sheaves on $Y$ \cite{BBD, dCM1}, then the perverse filtration associated with the morphism $f$ can be computed as
\[
\displaystyle P_k^fH^*(X,\BQ)=\mathrm{Im}\{{H}^{*-\dim X+r(f)}(\oplus_{i=0}^k\mathcal{P}_i[-i])\to H^*(X,\BQ)\}.
\]
In general, a perverse filtration does not allow a natural splitting, since the decomposition (\ref{decomp}) is dependent on the choice of an isomorphism which is not canonical.

We define the \emph{perversity} $\mathfrak{p}^f(\alpha)$ of a class $\alpha \in H^d(X, \BQ)$ to be the integer $k$ satisfying $\alpha \in P_{k}^fH^d(X, \BQ)$ and $\alpha \not\in P^f_{k-1}H^d(X, \BQ)$. Since the perverse filtration is concentrated in the degrees $[0, 2r(f)]$\footnote{We call that a filtration on a vector space is concentrated in the degrees $[a, b]$, if the $k$-th graded piece of the filtration is empty when $k \not\in [a,b]$.}, we have 
\[
0 \leq \mathfrak{p}^f(\alpha) \leq 2r(f)
\]
for any class $\alpha \in H^\ast(X, \BQ)$. 

Let $f_1: X_1 \rightarrow Y_1$ and $f_2: X_2 \rightarrow Y_2$ be proper morphisms between nonsingular quasi-projective varieties. We recall the following proposition from \cite{Z} concerning the perverse filtration associated with the product
\[
f_1\times f_2: X_1 \times X_2 \rightarrow Y_1 \times Y_2.
\]

\begin{prop}{\cite[Proposition 2.1]{Z}}\label{kunneth}
For $\alpha_1 \in H^\ast(X_1, \BQ)$ and $\alpha_2 \in H^\ast(X_2, \BQ)$, we have
\[
\mathfrak{p}^{f_1 \times f_2}(\alpha_1 \boxtimes \alpha_2) = \mathfrak{p}^{f_1}(\alpha_1) + \mathfrak{p}^{f_2}(\alpha_2).
\]
\end{prop}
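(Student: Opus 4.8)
The plan is to reduce the statement to a K\"unneth computation at the level of perverse decompositions, so that the additivity of perversities becomes the additivity of the external-product bidegree. First I would record that the normalizing data is additive. Since
\[
(X_1\times X_2)\times_{Y_1\times Y_2}(X_1\times X_2)\cong (X_1\times_{Y_1}X_1)\times(X_2\times_{Y_2}X_2),
\]
one gets $r(f_1\times f_2)=r(f_1)+r(f_2)$, and hence $\dim(X_1\times X_2)-r(f_1\times f_2)=\sum_{i}\big(\dim X_i-r(f_i)\big)$. Combined with the K\"unneth isomorphism for proper pushforward over the field $\BQ$,
\[
R(f_1\times f_2)_*\BQ_{X_1\times X_2}\cong Rf_{1*}\BQ_{X_1}\boxtimes Rf_{2*}\BQ_{X_2},
\]
this identifies the normalized complex on $Y_1\times Y_2$ with the external product of the two normalized complexes on the factors.

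Next I would fix perverse decompositions as in (\ref{decomp}),
\[
Rf_{i*}\BQ_{X_i}[\dim X_i-r(f_i)]\cong\bigoplus_{a}\mathcal{P}^{(i)}_a[-a],\qquad i=1,2,
\]
with each $\mathcal{P}^{(i)}_a$ perverse on $Y_i$. The essential input is that the external product of perverse sheaves is again perverse for the middle perversity; granting this,
\[
\bigoplus_{a,b}\big(\mathcal{P}^{(1)}_a\boxtimes\mathcal{P}^{(2)}_b\big)[-a-b]
\]
is a legitimate perverse decomposition of the normalized complex for $f_1\times f_2$, whose $c$-th perverse piece is $\bigoplus_{a+b=c}\mathcal{P}^{(1)}_a\boxtimes\mathcal{P}^{(2)}_b$. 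Because the perverse filtration defined by ${^\mathfrak{p}\tau_{\le k}}$ is intrinsic and can be computed from \emph{any} such decomposition, I may read $P^{f_1\times f_2}_\bullet$ off from this one.

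Passing to (hyper-)cohomology, the K\"unneth isomorphism $H^*(X_1\times X_2,\BQ)\cong H^*(X_1,\BQ)\otimes H^*(X_2,\BQ)$ is compatible with the decomposition by perverse degree, giving a canonical identification of associated graded spaces
\[
\mathrm{Gr}^{P}_{c}H^*(X_1\times X_2,\BQ)\cong\bigoplus_{a+b=c}\mathrm{Gr}^{P}_aH^*(X_1,\BQ)\otimes\mathrm{Gr}^{P}_bH^*(X_2,\BQ),
\]
under which $\alpha_1\boxtimes\alpha_2$ maps to $[\alpha_1]\otimes[\alpha_2]$. Writing $p_i=\mathfrak{p}^{f_i}(\alpha_i)$, membership in the tensor filtration shows $\alpha_1\boxtimes\alpha_2\in P^{f_1\times f_2}_{p_1+p_2}$, i.e. $\mathfrak{p}^{f_1\times f_2}(\alpha_1\boxtimes\alpha_2)\le p_1+p_2$; and since $[\alpha_i]\neq 0$ in $\mathrm{Gr}^P_{p_i}H^*(X_i,\BQ)$, the element $[\alpha_1]\otimes[\alpha_2]$ is nonzero in the degree-$(p_1+p_2)$ graded piece (a tensor product of nonzero vectors over a field is nonzero), which forces the reverse inequality and hence equality.

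The only genuinely nontrivial ingredient, and the step I would treat most carefully, is the t-exactness of the external product for the middle perversity, namely that $\mathcal{P}^{(1)}_a\boxtimes\mathcal{P}^{(2)}_b$ is perverse; this is where the self-duality of the middle perversity and working over a field enter, and I would cite \cite{BBD}. Once this is in place the argument is essentially formal: everything else is the additivity of the external-product bidegree together with the observation that nonzero classes remain nonzero in the top graded piece. The remaining bookkeeping — that the K\"unneth decomposition in hypercohomology is compatible with the chosen splittings — is routine precisely because I compute the intrinsic filtration from a single fixed product splitting, so no commutation of K\"unneth with ${^\mathfrak{p}\tau_{\le k}}$ is required.
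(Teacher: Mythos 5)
Your proof is correct and follows the standard argument: the paper does not prove this proposition but imports it from \cite[Proposition 2.1]{Z}, and the proof there runs along essentially the same lines you propose (additivity of the defect of semismallness, the K\"unneth isomorphism for $R(f_1\times f_2)_\ast\BQ$, the fact from \cite{BBD} that external products of perverse sheaves are perverse, and the observation that the top nonvanishing bigraded component $\alpha_1^{(p_1)}\boxtimes\alpha_2^{(p_2)}$ is a tensor product of nonzero vectors over a field, hence nonzero). Your identification of the perversity of $\mathcal{P}^{(1)}_a\boxtimes\mathcal{P}^{(2)}_b$ as the one genuinely nontrivial input is accurate, and the bookkeeping with the graded pieces is handled correctly.
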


\subsection{Perverse decompositions}
In this section, we introduce perverse decompositions for symmetric powers and Hilbert schemes of points associated with a surface, which split the perverse filtrations on the corresponding cohomology groups.

Let $\pi: S \rightarrow C$ be a proper surjective morphism from a nonsingular quasi-projective surface to a nonsingular curve. Then $r(\pi)=1$, and $H^\ast(S, \BQ)$ admits a perverse filtration of length 2,
\begin{equation}\label{perv123}
P^{\pi}_0 H^\ast(S, \BQ) \subset P^{\pi}_1 H^\ast(S, \BQ)  \subset P^{\pi}_2 H^\ast(S, \BQ) = H^\ast(S, \BQ).
\end{equation}
Throughout Section 2, we \emph{fix} a decomposition 
\begin{equation}\label{fixx}
H^\ast(S, \BQ) = \bigoplus_{i\geq 0} G_i H^\ast(S, \BQ)
\end{equation}
splitting the filtration (\ref{perv123}). More precisely, we have 
\[
P^{\pi}_kH^\ast(S, \BQ)= \bigoplus_{i\leq k}G_iH^\ast(S, \BQ)
\]
for $k\in \{0,1,2\}$.

We define the decomposition of $H^\ast(S^{n}, \BQ)$ to be
\[
G_kH^\ast(S^n, \BQ)=\left\langle \alpha_1\boxtimes \dots \boxtimes \alpha_n; ~~~ \alpha_i \in G_{k_i}H^\ast(S, \BQ),~~~ \sum_{i}k_i=k \right\rangle.
\]
Due to Proposition \ref{kunneth}, this decomposition splits the perverse filtration of $\pi^n : S^n \rightarrow C^n$. Taking the $\mathfrak{S}_n$-invariant part yields the perverse decomposition for the symmetric power $S^{(n)}$ associated with $\pi^{(n)}: S^{(n)} \rightarrow C^{(n)}$,
\[
H^\ast(S^{(n)}, \BQ) = \bigoplus_{i\geq 0} G_iH^\ast (S^{(n)}, \BQ).
\]
The perverse decomposition for the product $S^{(n_1)}\times S^{(n_2)} \times \dots \times S^{(n_k)}$ is then defined by a similar way using the K\"unneth decomposition.

We consider the Hilbert scheme $S^{[n]}$ of $n$ points on $S$, which is a $2n$-dimensional nonsingular variety parametrizing length $n$ 0-dimensional subschemes in $S$. Rich structures of the cohomology of the Hilbert scheme of points have been found and studied intensively in the last decades; see \cite{Go1, Go2, Na, L, LS, LQW}.
Now we assume
\[\pi^{[n]}: S^{[n]} \rightarrow C^{(n)}\]
is the natural morphism associated with $\pi$. For a partition 
\[
\nu = 1^{a_1}2^{a_2}\cdots n^{a_n}
\]
of $n$, we use $S^{(\nu)}$ to denote the variety $S^{(a_1)} \times S^{(a_2)} \times \cdots \times S^{(a_n)}$. By \cite{Go2}, the cohomology group $H^d(S^{[n]}, \BQ)$ admits a \emph{canonical} decomposition
\begin{equation}\label{GS_decomp}
H^d(S^{[n]}, \BQ) = \bigoplus_{\nu} H^{d+2l(\nu)-2n}(S^{(\nu)}, \BQ)
\end{equation}
where $\nu$ runs through all partitions of $n$ and $l(\nu)$ is the length of $\nu$. We define \[
G_kH^d(S^{[n]}, \BQ) = \bigoplus_{\nu} G_{k+l(\nu)-n}H^{d+2l(\nu)-2n}(S^{(\nu)}, \BQ)  
\]
to be the sub-vector space of $H^d(S^{[n]}, \BQ)$ under the identification (\ref{GS_decomp}). The following proposition is obtained from \cite[Proposition 4.12]{Z}.

\begin{prop}
The decomposition
\[
H^\ast(S^{[n]}, \BQ) = \bigoplus_{i\geq 0} G_iH^\ast (S^{[n]}, \BQ)
\]
defined above splits the perverse filtration associated with $\pi^{[n]}: S^{[n]} \rightarrow C^{(n)}$.
\end{prop}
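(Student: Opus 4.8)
The plan is to deduce this proposition from the corresponding statement for symmetric powers, exploiting the fact that G\"ottsche--Soergel's decomposition (\ref{GS_decomp}) is \emph{compatible with the decomposition theorem} for $\pi^{[n]}$. First I would recall why the perverse filtration for the symmetric power $\pi^{(\nu)}: S^{(\nu)} \to C^{(\nu)}$ is correctly split by the subspaces $G_\bullet H^\ast(S^{(\nu)}, \BQ)$ defined above: this follows from Proposition \ref{kunneth} applied repeatedly, combined with the fact that taking $\mathfrak{S}_{a_i}$-invariants is exact and preserves the perverse $t$-structure, so that the splitting for $S^n$ descends to each factor $S^{(a_i)}$ and hence to the product $S^{(\nu)} = \prod_i S^{(a_i)}$. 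Thus each summand on the right-hand side of (\ref{GS_decomp}) already carries its own perverse decomposition.

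The key input is that the G\"ottsche--Soergel isomorphism (\ref{GS_decomp}) is not merely an abstract cohomological identification but arises from an isomorphism of the pushforward complexes at the level of $D^b_c(C^{(n)})$. Concretely, the decomposition theorem gives
\begin{equation}\label{pushdecomp}
R\pi^{[n]}_\ast \BQ_{S^{[n]}}[\dim] \cong \bigoplus_\nu \left( R\pi^{(\nu)}_\ast \BQ_{S^{(\nu)}}[\dim] \right)[-2(n-l(\nu))]\big|_{\text{via } \iota_\nu},
\end{equation}
where the strata $C^{(\nu)} \hookrightarrow C^{(n)}$ are glued according to the diagonal geometry of the symmetric product and the shift $2(n-l(\nu))$ records the codimension contribution. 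The crucial point is that this isomorphism is compatible with the perverse $t$-structures up to the explicit shift $l(\nu)-n$ appearing in the definition of $G_k H^d(S^{[n]},\BQ)$. Granting this, a class lying in $G_{k+l(\nu)-n} H^{d+2l(\nu)-2n}(S^{(\nu)},\BQ)$ sits in perversity exactly $k$ once one accounts for the shift $[-2(n-l(\nu))]$, which raises the perverse degree by $2(n-l(\nu))$ but the numerics are arranged so that the index shifts by $l(\nu)-n$ on the filtration. This is precisely the content of \cite[Proposition 4.12]{Z}, which I would invoke.

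The main obstacle is verifying the compatibility in (\ref{pushdecomp}) at the level of perverse filtrations rather than just Betti numbers --- that is, checking that the shift by $l(\nu)-n$ in the definition of $G_k H^d(S^{[n]},\BQ)$ is the correct normalization so that $\bigoplus_{i \le k} G_i H^\ast(S^{[n]},\BQ)$ equals $P^{\pi^{[n]}}_k H^\ast(S^{[n]},\BQ)$. The delicate part is that the relative dimension of $\pi^{[n]}$ is $n$ while the morphism is generically finite over open strata, so $r(\pi^{[n]})$ and the perverse normalization must be tracked carefully through each stratum; one must confirm that the defect of semismallness and the shifts conspire correctly. Since this normalization is exactly what \cite[Proposition 4.12]{Z} establishes, the cleanest route is to reduce the statement to that reference: I would verify that our $G_\bullet$ agrees with the grading used there (matching conventions on the shift $\dim X - r(f)$ and the length function $l(\nu)$), and then the proposition follows immediately. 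The remaining work is purely bookkeeping of shifts, which I would present as a short verification rather than a new argument.
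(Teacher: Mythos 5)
Your proposal is correct and follows essentially the same route as the paper, which gives no independent argument for this proposition but simply derives it from \cite[Proposition 4.12]{Z}; your additional discussion of the sheaf-theoretic origin of the G\"ottsche--Soergel decomposition and the shift bookkeeping is exactly the content of that reference. Nothing further is needed.
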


We have constructed perverse decompositions on the cohomology groups $H^\ast(S^{[n]}, \BQ)$. Again, by Proposition \ref{kunneth}, we obtain perverse decompositions for the product of Hilbert schemes
\[
S^{[n_1]}\times S^{[n_2]} \times \dots S^{[n_k]}.
\]

\subsection{Nested Hilbert schemes}
We consider the nested Hilbert scheme
\[
S^{[n,n+1]}=\{(\xi,\eta): \xi\in S^{[n+1]}, \eta\in S^{[n]}, \xi\subset \eta\} \subset S^{[n+1]}\times S^{[n]}
\]
which is a nonsingular and irreducible variety of dimsneion $2n+2$ \cite{Ch}. In this section, we extend perverse decompositions to the nested Hilbert schemes associated with $\pi: S \rightarrow C$, which plays a crucial role in the proofs of Theorems \ref{thm1} and \ref{P=W}.

The variety $S^{[n,n+1]}$ admits the following natural morphisms
\begin{equation}\label{incidence123}
\begin{tikzcd}
S& S^{[n,n+1]}\arrow[l, "\rho"] \arrow[d,"p_n"]\arrow[r,"p_{n+1} "]& S^{[n+1]} \\
& S^{[n]} &
\end{tikzcd}
\end{equation}
where $p_{n}$ and $p_{n+1}$ are the two projections, and $\rho$ sends $(\xi,\eta)$ to the point $\{\eta \smallsetminus \xi\} \in S$. The morphism
\[
q_n = (p_n, \rho): S^{[n, n+1]} \rightarrow  S^{[n]}\times S
\]
is realized as the blow-up of the codimension 2 incidence $Z_n \subset S^{[n]} \times S$ with the exceptional divisor
\begin{equation}\label{exceptional123}
E_{n+1} = \{(\xi, \eta) \in S^{[n+1,n]}: \mathrm{Supp}(\xi) = \mathrm{Supp}(\eta)\} \subset S^{[n, n+1]}.
\end{equation}

Let $g_n: S^{[n, n+1]} \rightarrow S^{(n)} \times S$ be the composition
\[
S^{[n,n+1]} \xrightarrow{q_n} S^{[n]}\times S \rightarrow S^{(n)}\times S,
\]
and let 
\[
\bar{g}_n = (\pi^{(n)} \times \pi)\circ g_n: S^{[n,n+1]} \to C^{(n)} \times C.
\]

Following \cite{Ch}, we define 
\[
S^{(\nu, j)}=
\begin{cases}
S^{(\nu)}\times S & j=0;\\
S^{(\nu^\flat)}\times S& j>0 \textrm{ and } a_j>0;\\
\phi & j>0 \textrm{ and } a_j=0,
\end{cases}
\]
where $\nu=1^{a_1}\cdots n^{a_n}$ is a partition of $n$ and $\nu^\flat$ is defined as a partition of $n-j$ obtained from $\nu$ by reducing $a_j$ by $1$. This definition matches the one in \cite[Section 3.3]{dCM4}. In particular $S^{(\nu,j)}$ is a Cartesian product of symmetric powers of $S$. Hence the cohomology of $S^{(\nu,j)}$ carries a natural perverse decomposition as defined in Section 1.2, 
\[
H^\ast(S^{(\nu,j)}, \BQ)=\bigoplus_{k\ge0} G_k H^\ast(S^{(\nu,j)},\BQ).
\]
By \cite[Theorem 3.3.1]{dCM4}, there is a \emph{canonical} isomorphism
\begin{equation}\label{decomp1234}
H^d(S^{[n,n+1]}, \BQ)=\bigoplus_{\nu,j} H^{d-2m(\nu,j) }(S^{(\nu,j)},\BQ)
\end{equation}
where
\[
m(\nu,j) = 
\begin{cases}
n-l(\nu), & j=0;\\
n+1-l(\nu), & j>0.
\end{cases}
\]
We define the following decomposition of $H^d(S^{[n,n+1]},\BQ)$ under the identification (\ref{decomp1234}), 
\[
G_k H^d(S^{[n,n+1]},\BQ)=\bigoplus_{\nu,j} G_{k-m(\nu,j)}H^{d-2m(\nu,j)}(S^{(\nu,j)},\BQ).
\]

Since $S^{(\nu,j)}$ is a product of symmetric powers of $S$ for every $(\nu,j)$, the method in \cite[Proposition 4.12]{Z} together with the decomposition theorem \cite[Theorem 3.3.1]{dCM4} implies the following proposition.

\begin{prop}
The decomposition 
\[
H^\ast(S^{[n,n+1]},\BQ)=\bigoplus_{i\ge 0}G_i H^\ast(S^{[n,n+1]},\BQ)
\]
defined above splits the perverse filtration associated with \[
\bar{g}_n:S^{[n,n+1]}\rightarrow C^{(n)}\times C.\]
\end{prop}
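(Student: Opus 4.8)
The strategy is to transcribe the proof of \cite[Proposition 4.12]{Z} for ordinary Hilbert schemes, using the decomposition theorem \cite[Theorem 3.3.1]{dCM4} for the nested incidence in place of G\"ottsche--Soergel \cite{Go2}. The first step is to observe that the cohomological identity (\ref{decomp1234}) underlies a canonical splitting in $D^b_c(S^{(n)}\times S)$,
\[
Rg_{n\ast}\BQ_{S^{[n,n+1]}}\;\cong\;\bigoplus_{\nu,j}\,(a_{\nu,j})_\ast\BQ_{S^{(\nu,j)}}[-2m(\nu,j)],
\]
where $a_{\nu,j}\colon S^{(\nu,j)}\to S^{(n)}\times S$ is the finite morphism attached to the stratum $(\nu,j)$. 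Since $q_n$ is a blow-up and the Hilbert--Chow morphism is semismall, the composite $g_n$ is semismall; equivalently, using $\dim S^{(\nu,j)}=\dim S^{[n,n+1]}-2m(\nu,j)$ and the $t$-exactness of the finite pushforward $(a_{\nu,j})_\ast$, each summand $(a_{\nu,j})_\ast\BQ_{S^{(\nu,j)}}[\dim S^{(\nu,j)}]$ is a genuine perverse sheaf on $S^{(n)}\times S$. This purity is what will eventually force the perverse shift to be exactly half of the cohomological shift $2m(\nu,j)$.

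Next I would push this isomorphism forward along $\pi^{(n)}\times\pi$. Writing $h_{\nu,j}=(\pi^{(n)}\times\pi)\circ a_{\nu,j}\colon S^{(\nu,j)}\to C^{(n)}\times C$, I obtain a canonical decomposition
\[
R\bar{g}_{n\ast}\BQ_{S^{[n,n+1]}}\;\cong\;\bigoplus_{\nu,j} R(h_{\nu,j})_\ast\BQ_{S^{(\nu,j)}}[-2m(\nu,j)].
\]
Because the perverse truncation functors ${}^{\mathfrak{p}}\tau_{\le k}$ commute with finite direct sums, the perverse filtration associated with $\bar{g}_n$ splits as the direct sum of the perverse filtrations associated with the maps $h_{\nu,j}$, each shifted by $[-2m(\nu,j)]$. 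It remains to identify the filtration attached to each $h_{\nu,j}$ and to pin down the shift.

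For the identification I would use that every $S^{(\nu,j)}$ is a product of symmetric powers of $S$ and that $h_{\nu,j}$ factors as a product map of the form $\prod_i\pi^{(a_i)}\times\pi$ to a product of symmetric powers of $C$, followed by the natural ``addition with multiplicities'' morphism onto $C^{(n)}\times C$. This addition morphism is proper with finite fibers, hence finite onto its image; finite morphisms are $t$-exact for the perverse $t$-structure and, by the description $r(f)=\dim X\times_Y X-\dim X$, composing with a finite morphism leaves the defect of semismallness unchanged. Therefore the perverse filtration of $h_{\nu,j}$, \emph{including its indexing}, coincides with the perverse filtration of the underlying product map, which by Proposition \ref{kunneth} is precisely the perverse decomposition $G_\bullet H^\ast(S^{(\nu,j)},\BQ)$ built in Section 1.2.

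The main obstacle is the bookkeeping of shifts: I must show that a class of $h_{\nu,j}$-perversity $p$ contributes to $\bar{g}_n$-perversity exactly $p+m(\nu,j)$, i.e. that the cohomological shift $2m(\nu,j)$ is matched by a perverse shift of only $m(\nu,j)$. Tracing the definition of $P^f_\bullet$ through the displayed decomposition, this reduces to the single identity
\[
r(\bar{g}_n)=r(h_{\nu,j})+m(\nu,j),
\]
which must hold uniformly in $(\nu,j)$. This is exactly where the semismallness of $g_n$ enters: because each $(a_{\nu,j})_\ast\BQ_{S^{(\nu,j)}}[\dim S^{(\nu,j)}]$ occupies a single perverse degree, the only re-indexing incurred on passing to the base $C^{(n)}\times C$ is the one recorded in (\ref{decomp1234}). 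The identity is then verified by additivity of $r(\cdot)$ under products together with its invariance under the finite addition maps: for $j=0$ one finds $r(h_{\nu,j})=l(\nu)+1$ and $m(\nu,j)=n-l(\nu)$, and for $j>0$ one finds $r(h_{\nu,j})=l(\nu)$ and $m(\nu,j)=n+1-l(\nu)$, so in every case $r(h_{\nu,j})+m(\nu,j)=n+1=r(\bar{g}_n)$. With this identity in hand, the decomposition $\bigoplus_i G_i H^\ast(S^{[n,n+1]},\BQ)$ splits $P^{\bar{g}_n}_\bullet H^\ast(S^{[n,n+1]},\BQ)$, as claimed.
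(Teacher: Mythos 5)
Your argument is correct and follows essentially the same route as the paper, whose proof simply invokes the method of \cite[Proposition 4.12]{Z} together with the decomposition theorem \cite[Theorem 3.3.1]{dCM4}: decompose $Rg_{n\ast}\BQ_{S^{[n,n+1]}}$ over the strata $S^{(\nu,j)}$, push forward along the maps $h_{\nu,j}$ (products of symmetric-power fibrations composed with finite addition maps, handled by Proposition \ref{kunneth} and $t$-exactness of finite pushforward), and match the cohomological shift $2m(\nu,j)$ against the perverse re-indexing $r(\bar{g}_n)-r(h_{\nu,j})=m(\nu,j)$, all of which you verify correctly. One minor caution: a composite of semismall maps need not be semismall in general, so the semismallness of $g_n$ should be taken directly from \cite[Theorem 3.3.1]{dCM4} rather than deduced from the blow-up $q_n$ and the Hilbert--Chow morphism separately --- but your dimension count $\dim S^{(\nu,j)}=\dim S^{[n,n+1]}-2m(\nu,j)$ already supplies the correct justification, so this does not affect the proof.
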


\subsection{Functoriality}
The natural morphisms \[
q_n: S^{[n,n+1]} \to S^{[n]}\times S
\]
and 
\[
p_{n+1}: S^{[n,n+1]} \rightarrow S^{[n+1]}
\]
are generically finite. In the following, we prove some functoriality results concerning the morphisms $q_n$ and $p_{n+1}$ with respect to the perverse decompositions constructed in Sections 1.2 and 1.3.

\begin{prop}\label{prop1.4}
Let $q_n^\ast: H^\ast(S^{[n]}\times S, \BQ) \to H^\ast(S^{[n,n+1]}, \BQ)$ be the pullback morphism. We have
\[
q_n^* \left(G_kH^\ast(S^{[n]}\times S, \BQ) \right) \subset G_kH^\ast(S^{[n,n+1]},\BQ).
\]
\end{prop}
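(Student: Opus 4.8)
The plan is to reduce the statement to a term-by-term comparison of the G\"ottsche--Soergel decomposition (\ref{GS_decomp}) of $H^*(S^{[n]}\times S,\BQ)$ with the de Cataldo--Migliorini decomposition (\ref{decomp1234}) of $H^*(S^{[n,n+1]},\BQ)$, exploiting that $q_n$ is the blow-up of $S^{[n]}\times S$ along the codimension-two center $Z_n$ with exceptional divisor $E_{n+1}$.

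First I would record that the two base maps are compatible: since $g_n$ factors as $S^{[n,n+1]}\xrightarrow{q_n}S^{[n]}\times S\to S^{(n)}\times S$ and $\bar{\pi}^{[n]}$ factors through the Hilbert--Chow morphism $S^{[n]}\to S^{(n)}$, one has $\bar{g}_n=\bar{\pi}^{[n]}\circ q_n$, so $q_n^*$ is a pullback over the common base $C^{(n)}\times C$. Next I would match summands. Tensoring (\ref{GS_decomp}) with $H^*(S,\BQ)$ and using $S^{(\nu,0)}=S^{(\nu)}\times S$ together with $m(\nu,0)=n-l(\nu)$ gives
\[
H^d(S^{[n]}\times S,\BQ)=\bigoplus_{\nu}H^{d-2m(\nu,0)}(S^{(\nu,0)},\BQ),
\]
which is precisely the $j=0$ part of (\ref{decomp1234}). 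Comparing the defining formulas, the $G_\bullet$-grading on each $\nu$-summand is $G_{k-m(\nu,0)}H^*(S^{(\nu,0)},\BQ)$ on both sides; hence the $j=0$ part of $G_kH^*(S^{[n,n+1]},\BQ)$ is literally identified with $G_kH^*(S^{[n]}\times S,\BQ)$. The proposition therefore follows once I show that $q_n^*$ is exactly the inclusion of this $j=0$ summand, i.e. that $q_n^*$ carries the $\nu$-summand of the source isomorphically onto the $(\nu,0)$-summand of the target under the two canonical identifications with $H^{*-2m(\nu,0)}(S^{(\nu,0)},\BQ)$.

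To establish this I would use the blow-up structure of $q_n$: the projection formula gives $q_{n*}q_n^*=\mathrm{id}$, so $q_n^*$ is a split injection and
\[
H^*(S^{[n,n+1]},\BQ)=q_n^*H^*(S^{[n]}\times S,\BQ)\ \oplus\ (\text{classes supported on }E_{n+1}).
\]
By the dimension count the first summand has the same rank as the $j=0$ part of (\ref{decomp1234}), while the exceptional classes account for the $j>0$ summands $S^{(\nu^\flat)}\times S$; this matches the construction of \cite[Theorem 3.3.1]{dCM4}, in which the $(\nu,0)$-pieces are cut out by the correspondences arising from the graph of $q_n$ (equivalently from $(p_n,\rho)$) while the $(\nu,j)$-pieces with $j>0$ are cut out by correspondences supported on $E_{n+1}$. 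Granting that the de Cataldo--Migliorini $j=0$ summand coincides with $q_n^*$ composed with the G\"ottsche--Soergel embedding, $q_n^*$ sends $G_{k-m(\nu,0)}H^*(S^{(\nu,0)},\BQ)$ in the source to the identical subspace inside the $(\nu,0)$-summand of the target, and therefore $q_n^*\big(G_kH^*(S^{[n]}\times S,\BQ)\big)\subset G_kH^*(S^{[n,n+1]},\BQ)$.

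The hard part will be this precise identification: showing that the canonical $j=0$ summand of \cite[Theorem 3.3.1]{dCM4} is exactly $q_n^*$ of the G\"ottsche--Soergel decomposition, compatibly with the $G_\bullet$-grading, rather than some summand differing by exceptional contributions. As a consistency check on the normalizations I would verify that both perverse filtrations are concentrated in $[0,2n+2]$, i.e. $r(\bar{\pi}^{[n]})=r(\bar{g}_n)=n+1$, so that the defining shift $[\dim-r(\cdot)]=[n+1]$ agrees on both sides; combined with the split injection $R(\bar{\pi}^{[n]})_*\BQ_{S^{[n]}\times S}\hookrightarrow R(\bar{g}_n)_*\BQ_{S^{[n,n+1]}}$ induced by the unit $\BQ_{S^{[n]}\times S}\to R(q_n)_*\BQ_{S^{[n,n+1]}}$ and the fact that ${}^{\mathfrak p}\tau_{\le k}$ is additive and hence preserves direct-summand inclusions, this already yields the filtration-level statement $q_n^*(P^{\bar{\pi}^{[n]}}_k)\subset P^{\bar{g}_n}_k$; upgrading it to the chosen splitting $G_\bullet$ is exactly where the explicit decomposition is needed.
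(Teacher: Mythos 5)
Your strategy is the same as the paper's: both proofs compare the G\"ottsche--Soergel decomposition of $H^\ast(S^{[n]}\times S,\BQ)$ with the de Cataldo--Migliorini decomposition of $H^\ast(S^{[n,n+1]},\BQ)$ from \cite[Theorem 3.3.1]{dCM4}, observe that $S^{(\nu)}\times S=S^{(\nu,0)}$ with matching shift $m(\nu,0)=n-l(\nu)$, and reduce to showing that $q_n^\ast$ carries the $\nu$-summand of the source onto the $(\nu,0)$-summand of the target compatibly with $G_\bullet$. The one step you defer (``granting that the $j=0$ summand of \cite{dCM4} coincides with $q_n^\ast$ of the G\"ottsche--Soergel decomposition'') is precisely what the paper supplies, and it does so more directly than your blow-up/rank-count argument: both canonical decompositions are defined as images of correspondences $\Gamma_\ast\colon H^\ast(S^{(\nu)}\times S,\BQ)\to H^\ast(S^{[n]}\times S,\BQ)$ and $\tilde\Gamma_\ast\colon H^\ast(S^{(\nu,j)},\BQ)\to H^\ast(S^{[n,n+1]},\BQ)$, where $\tilde\Gamma=\Gamma\times_{S^{[n]}\times S}S^{[n,n+1]}$ sits in a Cartesian diagram over the stratum $S^{(\nu,j)}\to S^{(n)}\times S$; base change in this diagram gives the commutativity of $q_n^\ast$ with the two embeddings for each $\nu$, which is the whole content of the proof. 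Your observation that $q_n^\ast$ is split injective with complement of the expected rank supported on $E_{n+1}$ does not by itself pin down the identification (a complement of the right rank could still fail to respect the canonical summands), so as written the argument has a gap exactly where you flagged it; the Cartesian-diagram compatibility is the missing ingredient. Your closing remark that the sheaf-level split injection already yields the filtration-level statement $q_n^\ast(P^{\bar{\pi}^{[n]}}_k)\subset P^{\bar{g}_n}_k$ is correct and is essentially how the paper phrases Corollary \ref{functoriality}, but, as you note, it does not suffice for the statement about the chosen splitting $G_\bullet$.
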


\begin{proof}
Consider the following Cartesian diagrams,
\[
\begin{tikzcd}
 \tilde{\Gamma}\arrow[d]\arrow[r] & S^{[n,n+1]}\arrow[d, "q_n"]\\
 \Gamma \arrow[d]\arrow[r] & S^{[n]}\times S \arrow[d]\\
S^{(\nu,j)}\arrow[r] & S^{(n)}\times S
\end{tikzcd}
\]
where $S^{(\nu,j)}\to  S^{(n)}\times S$ is the natural morphism to the corresponding stratum \cite[Section 3]{dCM4}. 

On one hand, by \cite[Theorem 5.4.1]{dCM3} and the K\"unneth decomposition, there is a canonical decomposition
\[
H^\ast(S^{[n]}\times S,\BQ)=\bigoplus_\nu H^\ast(S^{(\nu)}\times S,\BQ),
\]
where the direct summands are canonically identified with the image of the correspondences \[
\Gamma_\ast:H^\ast(S^{(\nu)}\times S,\BQ)\rightarrow H^\ast(S^{[n]}\times S,\BQ).
\]
On the other hand, by \cite[Theorem 3.3.1]{dCM4} there is a canonical decomposition
\[
H^\ast(S^{[n,n+1]},\BQ)=\bigoplus_{\nu,j} H^\ast(S^{(\nu,j)},\BQ),
\]
where the direct summands are identified with the images of the correspondences $\tilde{\Gamma}_\ast:H^\ast(S^{(\nu,j)},\BQ)\rightarrow H^\ast(S^{[n,n+1]},\BQ)$. Since $S^{(\nu)}\times S=S^{(\nu,0)}$, we obtain a commutative diagram
\[
\begin{tikzcd}
H^\ast(S^{[n]}\times S,\BQ)\arrow[r,"q_n^\ast"]& H^\ast(S^{[n,n+1]},\BQ)\\
H^\ast(S^{(\nu)}\times S,\BQ)\arrow[r,equal] \arrow[u,hook]& H^\ast(S^{(\nu,0)},\BQ)\arrow[u,hook]
\end{tikzcd}
\]
for each partition $\nu$ of $n$. In particular $q_n^\ast$ preserves the perverse decompositions.
\end{proof}

\begin{prop}\label{prop1.5}
Let
\[
p_{n+1\ast}: H^\ast(S^{[n,n+1]}, \BQ)\rightarrow H^\ast(S^{[n+1]}, \BQ)
\]
be the Gysin pushforward. Then we have
\[
p_{n+1\ast} \left(G_kH^\ast(S^{[n,n+1]},\BQ) \right)\subset G_kH^\ast(S^{[n+1]},\BQ).
\]
\end{prop}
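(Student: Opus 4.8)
The plan is to follow the same strategy as in the proof of Proposition \ref{prop1.4}, replacing the pullback $q_n^\ast$ with the Gysin pushforward $p_{n+1\ast}$ and tracking how the correspondences interact with the two canonical decompositions. Both $H^\ast(S^{[n,n+1]},\BQ)$ and $H^\ast(S^{[n+1]},\BQ)$ carry canonical decompositions indexed by strata: the source splits as $\bigoplus_{\nu,j} H^\ast(S^{(\nu,j)},\BQ)$ by \cite[Theorem 3.3.1]{dCM4}, and the target splits as $\bigoplus_{\mu} H^{\ast}(S^{(\mu)},\BQ)$ (up to degree shift) by Göttsche--Soergel \cite{Go2}. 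The summands on each side are the images of explicit correspondences $\tilde\Gamma_\ast$ and $\Gamma_\ast$ coming from the respective incidence varieties.

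The key step is to understand $p_{n+1\ast}\circ \tilde\Gamma_\ast$ on each summand $H^\ast(S^{(\nu,j)},\BQ)$ of the source. First I would compose the correspondence $\tilde\Gamma: S^{(\nu,j)}\rightsquigarrow S^{[n,n+1]}$ with $p_{n+1}: S^{[n,n+1]}\to S^{[n+1]}$ and identify the resulting composite correspondence $S^{(\nu,j)}\rightsquigarrow S^{[n+1]}$ with (a multiple of) the Göttsche--Soergel correspondence $\Gamma^{(\mu)}$ onto the summand indexed by some partition $\mu=\mu(\nu,j)$ of $n+1$. Concretely, I expect that $p_{n+1\ast}$ sends the summand labeled by $(\nu,j)$ into the summand labeled by the partition $\mu$ obtained by adjoining a part of size $j+1$ to $\nu$ (with $j=0$ handled separately, corresponding to adding a part of size $1$). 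Granting such a matching, the proposition reduces to a bookkeeping comparison of the perverse shifts: I must check that the defining shift $m(\nu,j)$ on the source side, combined with the degree shift in the Göttsche--Soergel identification \eqref{GS_decomp} on the target side, is compatible with the shift $l(\mu)-(n+1)$ defining $G_k H^\ast(S^{[n+1]},\BQ)$. Since each $S^{(\nu,j)}$ and $S^{(\mu)}$ is a product of symmetric powers, Proposition \ref{kunneth} reduces all perversity statements to additivity of perversities under Künneth, so no new input about the perverse $t$-structure is needed beyond what is recorded in Sections 1.2 and 1.3.

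The main obstacle will be the explicit identification of the composite correspondence $p_{n+1}\circ \tilde\Gamma$ with the Göttsche--Soergel correspondence on the target, together with the accompanying numerology relating $m(\nu,j)$, $l(\nu)$, and $l(\mu)$. This is where one must invoke the precise geometry of the nested Hilbert scheme and the strata description of \cite{dCM4}, and verify that the map on strata is the expected "glue $j+1$ points together" operation; a careful check that lengths transform as $l(\mu)=l(\nu)-1+[j=0]$ (i.e.\ the part of size $j+1$ replaces the part of size $j$ being merged into it, while for $j=0$ a new singleton is created) is what makes the shift $m(\nu,j)$ land in the correct graded piece $G_k$ of the target. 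Once this combinatorial matching is established, the commutativity of the resulting diagram
\[
\begin{tikzcd}
H^\ast(S^{[n,n+1]},\BQ)\arrow[r,"p_{n+1\ast}"]& H^\ast(S^{[n+1]},\BQ)\\
H^\ast(S^{(\nu,j)},\BQ)\arrow[r] \arrow[u,hook]& H^\ast(S^{(\mu)},\BQ)\arrow[u,hook]
\end{tikzcd}
\]
(where the bottom arrow is, up to a scalar, the Gysin map induced by the merging morphism of symmetric products, which is itself compatible with the decompositions of Section 1.2) shows that $p_{n+1\ast}$ preserves the perverse decompositions, completing the proof.
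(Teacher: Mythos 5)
Your overall strategy is the same as the paper's: decompose both sides into strata summands ($\bigoplus_{\nu,j}H^\ast(S^{(\nu,j)},\BQ)$ for the nested Hilbert scheme via \cite[Theorem 3.3.1]{dCM4}, and the G\"ottsche--Soergel decomposition (\ref{GS_decomp}) for $S^{[n+1]}$), show that $p_{n+1\ast}$ sends the $(\nu,j)$ summand into the single summand indexed by the partition $\nu'$ of $n+1$ obtained by merging (for $j>0$, a part of size $j$ is replaced by one of size $j+1$; for $j=0$ a singleton is adjoined), and then check the numerology of the shifts $m(\nu,j)$ against $l(\nu')-(n+1)$. That skeleton is correct, and the shift bookkeeping does close up exactly: $(n+1)-l(\nu')=m(\nu,j)$ in both cases, and the induced map $q_{\nu,j}\colon S^{(\nu,j)}\to S^{(\nu')}$ is finite, hence preserves perversity. (Be careful with your stated formula $l(\mu)=l(\nu)-1+[j=0]$: with $\nu$ a partition of $n$ one has $l(\nu')=l(\nu)$ for $j>0$ and $l(\nu')=l(\nu)+1$ for $j=0$, so your formula is off by one, although your verbal description of the merging is the correct one.)

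The genuine gap is the step you explicitly ``grant'': that the composite $p_{n+1\ast}\circ\tilde\Gamma_\ast$ lands in a \emph{single} summand of the target decomposition rather than spreading across several. Identifying the composite correspondence cycle with (a multiple of) the G\"ottsche--Soergel correspondence is exactly the hard part, and computing it directly in the Chow group of $S^{(\nu,j)}\times S^{[n+1]}$ is not carried out in your proposal. The paper avoids this computation entirely with a short sheaf-theoretic argument that you should adopt: writing $f=q\circ g_n\colon S^{[n,n+1]}\to S^{(n+1)}$ and $h\colon S^{[n+1]}\to S^{(n+1)}$, both $Rf_\ast\BQ$ and $Rh_\ast\BQ$ decompose into shifted perverse sheaves supported on the irreducible subvarieties $\iota'(S^{(\nu')})\subset S^{(n+1)}$ (here one uses that $q\circ\iota$ is finite, so $q_\ast\iota_\ast\BQ_{S^{(\nu,j)}}[m(\nu,j)]$ is perverse with support $\iota'(S^{(\nu')})$); since perverse sheaves with distinct irreducible supports admit no nonzero morphisms, the natural map $Rf_\ast\BQ\to Rh_\ast\BQ$ inducing $p_{n+1\ast}$ \emph{automatically} splits as $q_\ast\iota_\ast\BQ_{S^{(\nu,j)}}\to\iota'_\ast\BQ_{S^{(\nu')}}$ stratum by stratum. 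This support argument is the missing idea that turns your ``expected matching'' into a proof; without it (or an explicit cycle computation replacing it), the claim that $p_{n+1\ast}$ is diagonal with respect to the two canonical decompositions is unjustified.
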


\begin{proof}
Consider the commutative diagram
\[
\begin{tikzcd}
S^{[n,n+1]}\arrow[rr,"p_{n+1}"]\ar[rd,"f"]\ar[d,"g_n"]& & S^{[n+1]}\arrow[ld,"h"]\\
S^{(n)}\times S \arrow[r,"q"]&S^{(n+1)} & \\
S^{(\nu,j)}\arrow[u,hook,"\iota"]\arrow[r,"q_{\nu,j}"]& S^{(\nu')}\arrow[u,hook,"\iota'"],
 \end{tikzcd}
\]
where $\nu=1^{a_1}\cdots n^{a_n}$ is a partition of $n$, and 
\[
\nu'=
\begin{cases}
1^{a_1+1}2^{a_2}\cdots n^{a_n}, & j=0;\\
1^{a_1}\cdots j^{a_j-1}(j+1)^{a_{j+1}+1}\cdots n^{a_n}, &j>0.
\end{cases}
\]
By \cite[Theorem 3.3.1]{dCM4}, we have 
\[
R{g_n}_\ast \BQ_{S^{[n,n+1]}}=\bigoplus_{\nu,j}\iota_\ast \BQ_{S^{(\nu,j)}}[-2m(\nu,j)],
\]
Since $q\circ \iota:S^{(\nu,j)}\rightarrow S^{(n+1)}$ is a finite map, the object $q_\ast \iota_\ast \BQ_{S^{(\nu,j)}}[m(\nu,j)]$ is a perverse sheaf supported on the locus
\[
\iota'(S^{(\nu')}) \subset S^{(n+1)},
\]
where $\iota': S^{(\nu')} \to S^{(n+1)}$ is the natural map to the corresponding stratum as in \cite{Go2}. We compare the following two decompositions of perverse sheaves on $S^{(n+1)}$,
\begin{align*}
   Rf_\ast \BQ_{S^{[n,n+1]}} &= \bigoplus_{\nu,j}q_\ast\iota_\ast \BQ_{S^{(\nu,j)}}[-2m(\nu,j)], \\
 Rh_\ast \BQ_{S^{[n+1]}} &= \bigoplus_{\nu'}{\iota'}_\ast\BQ_{S^{(\nu')}}[2l(\nu')-2n].
\end{align*}
Since perverse sheaves on different irreducible supports do not have morphisms between them, we obtain that the pushforward morphism 
\begin{equation}\label{123333}
Rf_\ast \BQ_{S^{[n,n+1]}}\rightarrow Rh_*\BQ_{S^{[n+1]}}
\end{equation}
splits canonically to
\[
q_\ast\iota_\ast \BQ_{S^{(\nu,j)}}\rightarrow \iota'_\ast\BQ_{S^{(\nu')}}
\]
for every partition $\nu$ and $j \geq 0$.
Since the pushforward
\[
p_{n+1 \ast}: H^\ast(S^{[n,n+1]}, \BQ) \to H^\ast(S^{[n+1]}, \BQ)
\]
is induced by the morphism (\ref{123333}) of sheaves, we conclude that $p_{n+1 \ast}$ is given by 
\[
(q_{\nu,j})_\ast: H^\ast(S^{(\nu,j)},\BQ)\rightarrow H^\ast(S^{(\nu')},\BQ)
\]
under the decompositions (\ref{GS_decomp}) and (\ref{decomp1234}) for every partition $\nu$ and $j \geq 0$.  

Finally, since the morphism $q_{\nu,j}$ is finite, the induced pushforward $(q_{\nu,j})_\ast$ preserves the perverse decomposition. We complete the proof.
\end{proof}

Although the perverse decompositions depend on a choice of (\ref{fixx}), the perverse filtrations are canonical. The following corollary concerns the functoriality of perverse filtrations.

\begin{cor}\label{functoriality}
We have
\begin{align*}
     q_n^\ast \left( P^{\bar{\pi}^{[n]}}_k H^\ast(S^{[n]}\times S, \BQ) \right) & \subset  P^{\bar{g}_n}_k H^\ast(S^{[n,n+1]}, \BQ),\\
    p_{n+1 \ast}\left( P^{\bar{g}_n}_k H^\ast(S^{[n,n+1]}, \BQ) \right)  &\subset P_k^{\pi^{[n+1]}} H^\ast(S^{[n+1]}, \BQ).
\end{align*}
\end{cor}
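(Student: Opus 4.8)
The plan is to reduce the statement to the two preceding propositions, using the fact that the (non-canonical) $G$-decompositions split the (canonical) perverse filtrations. First I would recall from Sections 1.2 and 1.3 that, for each of the three varieties $S^{[n]}\times S$, $S^{[n,n+1]}$, and $S^{[n+1]}$, the perverse filtration associated with the relevant map is recovered from the $G$-decomposition as a partial sum. That is,
\[
P^{\bar{\pi}^{[n]}}_k H^\ast(S^{[n]}\times S, \BQ)=\bigoplus_{i\le k}G_iH^\ast(S^{[n]}\times S,\BQ),
\]
and analogously $P^{\bar{g}_n}_k H^\ast(S^{[n,n+1]},\BQ)=\bigoplus_{i\le k}G_iH^\ast(S^{[n,n+1]},\BQ)$ and $P^{\pi^{[n+1]}}_k H^\ast(S^{[n+1]},\BQ)=\bigoplus_{i\le k}G_iH^\ast(S^{[n+1]},\BQ)$. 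Here the decomposition on $S^{[n]}\times S$ is the K\"unneth product decomposition, which splits $P^{\bar{\pi}^{[n]}}=P^{\pi^{[n]}\times\pi}$ by Proposition \ref{kunneth}.

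With this dictionary in place, both inclusions become immediate and formal. For the first, given $\alpha\in P^{\bar{\pi}^{[n]}}_kH^\ast(S^{[n]}\times S,\BQ)$, I would write $\alpha=\sum_{i\le k}\alpha_i$ with $\alpha_i\in G_iH^\ast(S^{[n]}\times S,\BQ)$, apply Proposition \ref{prop1.4} to each summand to obtain $q_n^\ast\alpha_i\in G_iH^\ast(S^{[n,n+1]},\BQ)$, and conclude that $q_n^\ast\alpha\in\bigoplus_{i\le k}G_iH^\ast(S^{[n,n+1]},\BQ)=P^{\bar{g}_n}_kH^\ast(S^{[n,n+1]},\BQ)$. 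The second inclusion follows in exactly the same way, replacing $q_n^\ast$ and Proposition \ref{prop1.4} by $p_{n+1\ast}$ and Proposition \ref{prop1.5}.

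The genuine content is entirely contained in Propositions \ref{prop1.4} and \ref{prop1.5}, so there is no real obstacle at this stage; the corollary is the filtration-level shadow of those graded-level statements. The one point worth checking carefully is precisely the bookkeeping indicated above: that the perverse filtrations named in the corollary coincide with the partial sums of the $G$-decompositions for which the two propositions are phrased, and in particular that the product decomposition on $S^{[n]}\times S$ computes $P^{\bar{\pi}^{[n]}}$ through Proposition \ref{kunneth}. Once this is confirmed, the upshot is conceptually clean: although the chosen splittings in (\ref{fixx}) are non-canonical, the inclusions they encode at the level of the filtrations are independent of that choice, and hence canonical.
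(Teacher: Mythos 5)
Your argument is correct and is exactly the paper's (implicit) reasoning: the corollary is stated immediately after Propositions \ref{prop1.4} and \ref{prop1.5} with no separate proof, precisely because the $G$-decompositions split the named perverse filtrations (via Proposition \ref{kunneth} for the product $S^{[n]}\times S$ and the constructions of Sections 1.2--1.3 for the other two spaces), so preservation of the grading immediately gives preservation of the filtration. Your bookkeeping check and the remark that the resulting filtration-level inclusions are independent of the choice in (\ref{fixx}) match the paper's own comment preceding the corollary.
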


In general, for a commutative diagram 
\begin{equation}\label{diagram123333}
\begin{tikzcd}
X\ar[rr,"f"]\ar[rd,"h"]& & Y\ar[ld,"g"]\\
 &B& 
\end{tikzcd}
\end{equation}  
with all varieties nonsingular and all morphisms proper, the perverse filtration 
\[
P^g_kH^\ast(Y, \BQ) \quad (\textrm{resp.} ~~~~~~~~ P^h_kH^\ast(X, \BQ))
\]
may not be preserved by the pullback $f^\ast$ (resp. the pushforward $f_\ast$); see Example \ref{Example} below. Hence Corollary \ref{functoriality} relies on the geometry of (nested) Hilbert schemes.

\begin{Example}\label{Example}
\begin{enumerate}
    \item[(i)] In the diagram (\ref{diagram123333}), we let $X=\mathrm{Bl}_{\mathrm{pt}} \BP^3$ and $Y=B=\BP^3$. Assume $f,g,h$ are the natural maps. Then we have $\mathfrak{p}^h([X])=1$ and $\mathfrak{p}^g([Y])=0$; see \cite[Example 1.5]{Z}. Hence
    \[
    f^\ast P^g_0 H^0(Y,\BQ) \not\subset P^h_0H^0(X,\BQ).
    \]
    \item[(ii)] Let $X=B=\mathrm{pt}$ and $Y=\BP^1$. Assume $f,g,h$ are the natural maps.
    We have $\mathfrak{p}^h([\mathrm{pt}])=0$ and $\mathfrak{p}^{g}([\mathrm{pt}])=2$, and in particular, 
    \[f_\ast P_0^hH^\ast(X,\BQ) \not\subset P_0^gH^\ast(Y,\BQ).
    \]
\end{enumerate}
\end{Example}

\subsection{Strong multiplicativity of the perverse decomposition} \label{multiplicativity} In this section, we prove the \emph{strong multiplicativity} with respect to the canonical perverse decomposition defined in Section 1.2 for the the Hilbert scheme of points on a surface with numerically trivial canonical divisor. This is a variant of \cite[Theorem 4.18]{Z} and \cite[Theorem 5.6]{Z}, where only perverse filtrations are considered. Since the proofs are similar, we will give a sketch and focus on pointing out the differences. 

Let $\pi:S\rightarrow C$ be a proper surjective morphism from a nonsingular quasi-projective surface to a nonsingular quasi-projective curve and let 
\begin{equation}\label{decomp00}
H^\ast(S,\BQ)=\bigoplus_i G_iH^\ast(S,\BQ)
\end{equation}
be a fixed perverse decomposition associated with $\pi:S\rightarrow C$. We say that the decomposition (\ref{decomp00}) satisfies the condition $(\dagger)$ if the following two properties are satisfied:
\begin{enumerate}
    \item The perverse decomposition is \emph{strongly} multiplicative, \emph{i.e.}
    \[
    G_iH^\ast(S,\BQ)\cup G_j H^\ast(S,\BQ)\subset G_{i+j}H^\ast(S,\BQ).
    \]
    \item The pushforward morphism along the embedding $\Delta_n:S\rightarrow S^n$ of the small diagonal satisfies
    \[
    \Delta_{n\ast}:G_iH^\ast(S,\BQ)\rightarrow G_{i+2(n-1)}H^\ast(S^n,\BQ).
    \]
\end{enumerate}

\begin{prop}\label{Prop1.8}
Suppose that the perverse decomposition (\ref{decomp00}) associated with $\pi: S \to C$ satisfies the condition ($\dagger$). Suppose further that either 
\begin{enumerate}
    \item[(i)] $S$ is projective and has numerically trivial canonical bundle, or
    \item[(ii)] $S$ has trivial canonical bundle and admits a nonsingular compactification $\bar{S}$, such that the restriction map $H^\ast(\bar{S},\BQ)\rightarrow H^\ast(S,\BQ)$ is surjective.
\end{enumerate}
Then the perverse decomposition on $H^\ast(S^{[n]},\BQ)$ is strongly multiplicative, \emph{i.e.}
\[
G_iH^\ast(S^{[n]},\BQ)\cup G_jH^\ast(S^{[n]},\BQ)\subset G_{i+j}H^\ast(S^{[n]},\BQ).
\]
\end{prop}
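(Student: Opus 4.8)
The plan is to reduce the strong multiplicativity of the canonical splitting to two assertions about a convenient set of ring generators, and then to verify those assertions geometrically through the incidence correspondences of Section 1.3 together with the hypotheses $(\dagger)$.

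First I would record an elementary reduction. Because the perverse decomposition on $H^\ast(S^{[n]},\BQ)$ is defined through the \emph{canonical} G\"ottsche isomorphism (\ref{GS_decomp}), each Nakajima monomial $\mathfrak{q}_{m_1}(\gamma_1)\cdots\mathfrak{q}_{m_l}(\gamma_l)|0\rangle$, with $\gamma_i\in G_{p_i}H^\ast(S,\BQ)$ homogeneous, is itself $G$-homogeneous of perversity
\[
\mathfrak{p}^{\pi^{[n]}}\big(\mathfrak{q}_{m_1}(\gamma_1)\cdots\mathfrak{q}_{m_l}(\gamma_l)|0\rangle\big)=\sum_{i=1}^{l}\big(p_i+m_i-1\big),
\]
which follows directly from the definition of $G_kH^\ast(S^{[n]},\BQ)$ and Proposition \ref{kunneth}. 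By \cite{LQW} these classes --- equivalently the tautological classes extracted from $c_k(\CO_{Z_n})$ --- generate the cohomology ring. Hence it suffices to prove that for each such homogeneous generator $x$ of $G$-degree $d$ the cup product operator $x\cup(-)$ carries $G_kH^\ast(S^{[n]},\BQ)$ into $G_{k+d}H^\ast(S^{[n]},\BQ)$ for every $k$: every monomial in the generators is then $G$-homogeneous of its total degree, the monomials of total degree $j$ span $G_jH^\ast(S^{[n]},\BQ)$, and the asserted inclusion $G_i\cup G_j\subset G_{i+j}$ follows by expanding an arbitrary class of $G_j$ in such monomials and applying them to a class of $G_i$.

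The heart of the argument is this operator estimate. Following the description of the cup product by \cite{LQW} and Lehn, I would assemble multiplication by the tautological generators from three ingredients: the pullback $q_n^\ast$ along $q_n\colon S^{[n,n+1]}\to S^{[n]}\times S$, correspondences and cup products internal to the symmetric products $S^{(\nu,j)}$, and the Gysin pushforward $p_{n+1\ast}$. Propositions \ref{prop1.4} and \ref{prop1.5} guarantee that $q_n^\ast$ and $p_{n+1\ast}$ each respect the perverse decomposition, so all $G$-degree shifts are produced by the surface-level operations in the middle step. This is exactly where the two clauses of $(\dagger)$ are used: strong multiplicativity of $G_\bullet H^\ast(S,\BQ)$ governs the genuine cup products on the factors of $S^{(\nu,j)}$, while the prescribed shift $\Delta_{n\ast}\colon G_iH^\ast(S,\BQ)\to G_{i+2(n-1)}H^\ast(S^n,\BQ)$ governs the punctual contributions --- the terms in which several points of the subscheme collide and are transferred along a small diagonal. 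Propagating these shifts through the G\"ottsche reindexing in (\ref{GS_decomp}) reproduces precisely the additive degree $\mathfrak{p}(\gamma)+m-1$ attached to each generator.

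The main obstacle is to show that the collision terms contribute the \emph{exact} shift rather than merely an inequality, and that no anomalous terms spoil the count. In Lehn's formula the correction terms are controlled by the tangent bundle of $S$, and any contribution proportional to the canonical class $K_S$ would shift the perversity by the wrong amount; this is the one place where sub-multiplicativity of the filtration (as in \cite[Theorem 4.18]{Z} and \cite[Theorem 5.6]{Z}) is not enough, since the canonical splitting must be matched term by term. Here the hypotheses enter: with $K_S$ numerically trivial the offending classes vanish in the relevant graded pieces, so $(\dagger)(2)$ applies with equality and the bookkeeping closes. To handle the quasi-projective case (ii) I would pass to a nonsingular compactification $\bar{S}$: the surjection $H^\ast(\bar{S},\BQ)\to H^\ast(S,\BQ)$, compatible with the tautological generators and with the perverse decomposition, transports the structural and generation statements needed above, while the triviality of $K_S$ on $S$ itself supplies the vanishing of the anomalous terms. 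Combining the reduction, the operator estimate fed by $(\dagger)$, and this vanishing yields $G_iH^\ast(S^{[n]},\BQ)\cup G_jH^\ast(S^{[n]},\BQ)\subset G_{i+j}H^\ast(S^{[n]},\BQ)$.
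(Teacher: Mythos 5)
Your reduction to ring generators is valid as far as it goes (and your formula for the $G$-degree of a Nakajima monomial is correct), but the central step --- the ``operator estimate'' showing that cup product by a tautological generator of $G$-degree $d$ carries $G_k$ into exactly $G_{k+d}$ --- is hung on the wrong machinery, and this is where the proof has a genuine gap. The maps $q_n^\ast$ and $p_{n+1\ast}$ of Propositions \ref{prop1.4} and \ref{prop1.5} relate $H^\ast(S^{[n]}\times S,\BQ)$, $H^\ast(S^{[n,n+1]},\BQ)$ and $H^\ast(S^{[n+1]},\BQ)$; their composite is the correspondence that \emph{adds a point} (this is exactly what drives the induction on $n$ in Section 2.3 for locating $c_k(\CO_{Z_n})$). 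It is not the operator of cup product by a tautological class on a fixed $H^\ast(S^{[n]},\BQ)$, so ``assembling'' the multiplication from $q_n^\ast$, operations on $S^{(\nu,j)}$, and $p_{n+1\ast}$ does not compute the products you need. Moreover, the exact-shift property for the generator operators is essentially the strong multiplicativity being proven (restricted to products with generators), so the reduction does not lower the difficulty; an independent verification via Lehn's commutator formulas would amount to redoing the Lehn--Sorger/Li--Qin--Wang computation of the whole ring.

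The paper's proof takes that latter route directly: it works with the wreath-product model $H^\ast(S,\BQ)\{\mathfrak{S}_n\}$ of the cohomology ring of $S^{[n]}$ (valid under the stated hypotheses on $K_S$, with case (ii) handled through the compactification $\bar S$ as in \cite[Theorem 5.6]{Z}), in which the cup product of \emph{arbitrary} classes is expressed through generalized diagonal pullbacks $\varphi^\ast\colon H^\ast(S,\BQ)^I\to H^\ast(S,\BQ)^J$ and pushforwards $\varphi_\ast$ between tensor powers of $H^\ast(S,\BQ)$. The entire content of the argument is then to upgrade \cite[Lemmas 4.19 and 4.20]{Z} from filtration statements to decomposition statements: condition $(\dagger)(1)$ gives that $\varphi^\ast$ preserves the $G$-grading, and condition $(\dagger)(2)$ on the small-diagonal pushforward gives that $\varphi_\ast$ shifts it by exactly $2(|I|-|J|)$. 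Your instinct about where $(\dagger)$ and the triviality of $K_S$ enter is correct in spirit, but to close the proof you need to route the argument through these $\varphi^\ast,\varphi_\ast$ maps on powers of $S$ (or an equivalent complete description of the ring), not through the nested Hilbert scheme correspondences.
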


\begin{proof}
We closely follow the notation in \cite[Section 4]{Z}. First of all, a chosen basis of $G_iH^\ast(S,\BQ)$ forms a basis of $H^\ast(S,\BQ)$. We use this basis to define the abstract perversity on $H^\ast(S,\BQ)\{\mathfrak{S}_n\}$ as \cite[Definition 4.15]{Z}. Then $H^\ast(S,\BQ)\{\mathfrak{S}_n\}$ is endowed with a canonical perverse decomposition compatible with the one on $H^\ast(S^{[n]},\BQ)$. Furthermore, by \cite[Definitions 4.1 and 4.3]{Z}, the group $H^\ast(S,\BQ)^I$ has a canonical perverse decomposition for any set $I$.

The strong multiplicativity of $H^\ast(S,\BQ)$ implies that the strengthened conclusion of \cite[Lemma 4.19]{Z} holds, that the morphism
\[
\varphi^\ast:H^\ast(S,\BQ)^I\rightarrow H^\ast(S,\BQ)^J
\]
preserves the perverse decompositions, \emph{i.e.}
\[
\varphi^\ast G_iH^\ast(S,\BQ)^I\subset G_iH^\ast(S,\BQ)^J.
\]
By the condition (2) on small diagonal embeddings, the conclusion of \cite[Lemma 4.20]{Z} can be strengthened that the pushforward
\[
\varphi_\ast: H^\ast(S,\BQ)^J\rightarrow H^\ast(S,\BQ)^I
\]
increases the perversity by $2(|I|-|J|)$ in the perverse decompositions, \emph{i.e.}
\[
\varphi_\ast G_iH^\ast(S,\BQ)^J\subset  G_{i+2(n-1)}H^\ast(S,\BQ)^I.
\]
Now the proposition follows from the proofs of \cite[Theorems 4.18 and 5.6]{Z} together with the strengthened versions of Lemmas 4.19 and 4.20 in \cite{Z}.
\end{proof}

\section{Tautological classes on Hilbert schemes}
\subsection{Overview} We assume that $\pi: S \rightarrow C$ is a fibration from a nonsingular irreducible projective surface to a nonsingular curve such that a general fiber is an elliptic curve. We use the tools developed in Section 1 to analyze the perverse filtration of the Hilbert scheme $S^{[n]}$ associated with the morphism
\begin{equation}\label{hilb_mor}
\pi^{[n]}: S^{[n]} \rightarrow C^{(n)},
\end{equation}
and prove Theorem \ref{thm1}.

By \cite{LQW}, the tautological classes
\begin{equation}
\int_{\gamma} c_k(\CO_{Z_n}) \in H^\ast(S^{[n]}, \BQ), \quad \gamma \in H^\ast(S, \BQ)
\end{equation}
are the generators of the ring $H^\ast(S^{[n]}, \BQ)$. Theorem \ref{thm1} together with Proposition \ref{kunneth} calculates the perversity of these generators on the Hilbert scheme $S^{[n]}$. 

Since \cite{Z} shows that the perverse filtration of $H^\ast(S^{[n]}, \BQ)$ associated with (\ref{hilb_mor}) is multiplicative when $S$ has numerically trivial canonical bundle, we deduce Corollary \ref{cor1} from Theorem \ref{thm1}.

\subsection{Exceptional divisors}
In this section, we assume that $\pi: S\to C$ is a proper surjective morphism from a nonsingular quasi-projective surface to a nonsingular curve $C$. 

Let $\partial S^{[n]}$ be the boundary divisor given by the locus of $S^{[n]}$ where the subschemes have length $\leq n-1$. For any choice  (\ref{fixx}), we obtain a perverse decomposition 
\[
H^\ast(S^{[n]}, \BQ) = \bigoplus_{i\geq 0} G_iH^\ast (S^{[n]}, \BQ)
\]
by the discussion in Section 1. The following lemma calculates the perversity of the boundary divisor
\[
\partial S^{[n]} \in H^2(S^{[n]}, \BQ).
\]

\begin{lemma}\label{perv_exc}
For any choice of (\ref{fixx}), We have
\[
\partial S^{[n]}\in G_1H^2(S^{[n]}, \BQ).
\]
In particular $\mathfrak{p}^{\pi^{[n]}}(\partial S^{[n]}) =1$.
\end{lemma}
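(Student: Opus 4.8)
The plan is to compute the perversity of the boundary divisor $\partial S^{[n]} \in H^2(S^{[n]}, \BQ)$ by locating it precisely within the canonical Göttsche--Soergel decomposition (\ref{GS_decomp}) and then reading off the perverse grading defined in Section 1.2. Recall that $G_kH^d(S^{[n]}, \BQ) = \bigoplus_{\nu} G_{k+l(\nu)-n}H^{d+2l(\nu)-2n}(S^{(\nu)}, \BQ)$, so to prove $\partial S^{[n]} \in G_1H^2(S^{[n]}, \BQ)$ it suffices to identify which Künneth/symmetric-power summands $\partial S^{[n]}$ occupies and check that in each occupied summand $S^{(\nu)}$ the relevant class lands in $G_{1+l(\nu)-n}H^{2+2l(\nu)-2n}(S^{(\nu)}, \BQ)$.

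The first step is to recall the well-known class description of the boundary divisor. The divisor $\partial S^{[n]}$ is (up to a factor of $2$) the exceptional divisor of the Hilbert--Chow morphism $S^{[n]} \to S^{(n)}$, and its support is the locus of non-reduced subschemes, whose generic point corresponds to the partition $\nu = 2\,1^{n-2}$ (one length-two fat point together with $n-2$ simple points). Under the decomposition (\ref{GS_decomp}), the fundamental class of the boundary therefore lives in the top cohomological degree of the summand indexed by $\nu = 2\,1^{n-2}$, together with possibly the ``diagonal'' part of the main summand $\nu = 1^n$ (i.e. $S^{(n)}$ itself). I would make this explicit by writing $\partial S^{[n]}$ in terms of the standard generators: up to scalars it equals the image of a class supported on the partial diagonal, which under Grojnowski--Nakajima/Göttsche's identification is exactly the summand for $\nu = 2\,1^{n-2}$.

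The second step is the perversity bookkeeping. For the summand $\nu = 2\,1^{n-2}$ one has $l(\nu) = n-1$, so $G_1H^2(S^{[n]})$ receives from this summand the piece $G_{1 + (n-1) - n}H^{2 + 2(n-1) - 2n}(S^{(\nu)}) = G_0H^0(S^{(\nu)})$; the fundamental class $1 \in H^0(S^{(\nu)})$ of the stratum indeed has perversity $0$, since the constant class always lies in $G_0$. Hence this contribution sits in $G_1H^2(S^{[n]})$, as required. For the remaining (diagonal) contribution in the $\nu = 1^n$ summand — which is the pushforward under $\Delta \colon S \to S^{(n)}$ of a class, or more precisely the image of the small/partial-diagonal class in $H^2(S^{(n)})$ — I would invoke Proposition \ref{kunneth} and the structure of the fixed splitting (\ref{fixx}): the diagonal in $S \times S$ decomposes via the Künneth splitting of $H^\ast(S)$ into terms whose total perversity is at most $1 + 1 = 2$ divided appropriately, and one checks each Künneth piece of the diagonal contributes perversity exactly $1$ after accounting for the degree shift. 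This is a direct computation using the multiplicativity assumptions built into $(\dagger)$ and the fact that the diagonal class $[\Delta] \in H^2(S \times S)$ has perversity $1$ with respect to $\pi \times \pi$ (its Künneth components pair a perversity-$0$ class with a perversity-$2$ class, or two perversity-$1$ classes, all summing to $2$, but the relevant degree-$2$ component has the correct single shift).

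The main obstacle, I expect, is pinning down the perversity of the diagonal contribution rather than the $\nu = 2\,1^{n-2}$ part, which is essentially combinatorial. Concretely, the delicate point is to verify that under the Künneth decomposition of the diagonal class the various graded pieces all lie in perversity $\leq 1$ in the appropriate summand, which requires knowing the perversities of a basis of $H^\ast(S)$ with respect to $\pi$ and applying Proposition \ref{kunneth} correctly through the degree shifts in (\ref{GS_decomp}). I would handle this by reducing to the single-point case $n=1$ (where $\partial S^{[1]} = 0$ and the statement is vacuous) and the $n=2$ base case, where $S^{[2]}$ blows up $S^{(2)}$ along the diagonal and one can compute the perversity of the exceptional divisor directly from the definition, and then argue that the general case follows by the same stratified analysis. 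The cleanest formulation uses the nested Hilbert scheme and Proposition \ref{prop1.5}: the boundary divisor is (a multiple of) $p_{n,\ast}[E_n]$ for the exceptional divisor of $q_{n-1}$, and since $E_n$ sits over the diagonal one tracks its perversity through the functoriality of Corollary \ref{functoriality}.
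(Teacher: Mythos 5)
Your overall strategy --- locate $\partial S^{[n]}$ inside the canonical decomposition (\ref{GS_decomp}) and read off the grading --- is the same as the paper's, and your bookkeeping for the summand $\nu=1^{n-2}2^1$ (where $l(\nu)=n-1$, so its contribution to $G_1H^2(S^{[n]},\BQ)$ is $G_0H^0(S^{(\nu)},\BQ)\ni 1$) is correct. The gap is in the second half: you leave open the possibility of a component of $\partial S^{[n]}$ in the main summand $\nu=1^n$, i.e.\ in $H^2(S^{(n)},\BQ)$, and your argument that such a component would be harmless does not work. First, $H^2(S^{(n)},\BQ)$ contains classes of perversity $0$, $1$ and $2$, so a nonzero component there is not automatically in $G_1$; it must be ruled out, not bounded. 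Second, the object you analyze for this, ``the diagonal class $[\Delta]\in H^2(S\times S)$,'' does not live in that degree ($\Delta\subset S\times S$ has codimension $2$, so $[\Delta]\in H^4$), and the surrounding discussion of K\"unneth components ``summing to $2$ \dots divided appropriately'' is not a computation. You also invoke condition $(\dagger)$, which is not among the hypotheses here (the lemma holds for \emph{any} choice of (\ref{fixx}), with no multiplicativity assumed). Finally, the fallback via $E_{n+1}$ and Proposition \ref{prop1.5} is circular as stated, since (\ref{exceptional}) expresses $E_{n+1}$ in terms of the very boundary divisors you are trying to locate.

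The missing idea that closes the gap is that the summands of (\ref{GS_decomp}) are, by the de Cataldo--Migliorini description of the decomposition theorem for the semismall Hilbert--Chow morphism, exactly the images of the correspondences $\Gamma_\ast\colon H^\ast(S^{(\nu)},\BQ)\to H^\ast(S^{[n]},\BQ)$ induced by the fiber products $\Gamma=S^{[n]}\times_{S^{(n)}}S^{(\nu)}$. For $\nu=1^{n-2}2^1$ the map $\Gamma\to S^{[n]}$ is generically injective onto $\partial S^{[n]}$, so $[\partial S^{[n]}]=\Gamma_\ast(1)$ \emph{exactly}; in particular the class has no component in any other summand, and the lemma then follows from your degree bookkeeping alone. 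This is precisely the paper's short proof.
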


\begin{proof}
Consider the following Cartesian diagram,
\[
\begin{tikzcd}
\Gamma\arrow[r]\arrow[d] & S^{[n]}\arrow[d]\\
S^{(1^{n-2}2^1)}\arrow[r, "\iota"] & S^{(n)}
\end{tikzcd}
\]
where $\iota=\iota_{(1^{n-2}2^1)}: S^{(1^{n-2}2^1)} \to S^{(n)}$ as in the proof of Proposition \ref{prop1.5}. We have 
\[
[\partial S^{[n]}] =  \Gamma_*([S^{(1^{n-2}2^1)}])
\]
where $\Gamma_\ast:H^\ast(S^{(1^{n-2}2^1)}, \BQ)\rightarrow H^\ast(S^{[n]}, \BQ)$ is viewed as a correspondence. 
Hence $\partial S^{[n]}$ is identified with the fundamental class
\[
1\in H^\ast(S^{(1^{(n-2)}2^1)}, \BQ)[-2]
\]
in the canonical decomposition (\ref{GS_decomp}). We conclude the lemma by the definition of the perverse decomposition for $S^{[n]}$.
\end{proof}

We recall that the nested Hilbert scheme is realized as the blow-up of $Z_n \subset S^{[n]} \times S$ with the exceptional divisor $E_{n+1}$ given by (\ref{exceptional123}). The following relation from \cite[Lemma 3.7]{L} expresses $E_{n+1}$ as boundary divisors of Hilbert schemes:
\begin{equation}\label{exceptional}
    E_{n+1} = \frac{1}{2}\cdot \Big{(}p_{n+1}^\ast \partial S^{[n+1]} - p_n^\ast \partial S^{[n]}\Big{)} \in H^2(S^{[n, n+1]}, \BQ).
\end{equation}
Here we use the notation as in the diagram (\ref{incidence123}). 

For convenience, we always write 
\[
f\times \mathrm{id}: X_1 \times S \to  X_2 \times S
\]
as $\tilde{f}: X_1\times S \to X_2\times S$. The following short exact sequence from \cite[Page 193]{L} compares the universal families of $S^{[n]}$ and $S^{[n+1]}$:
\begin{equation} \label{compare_uni_family}
    0 \to \tilde{\rho}^\ast \CO_{\Delta_S} \otimes \mathrm{pr}^\ast\CO_{S^{[n,n+1]}}(-E_{n+1}) \to {\tilde{p}_{n+1}}^\ast \CO_{Z_{n+1}} \to {\tilde{p}_n}^\ast \CO_{Z_n} \to 0
\end{equation}
where $\mathrm{pr}: S^{[n,n+1]} \times S \to S^{[n, n+1]}$ is the projection.

\subsection{Proof of Theorem \ref{thm1}}
We prove Theorem \ref{thm1} by induction on $n$. The induction base is given by the following lemma.

\begin{lemma}\label{induction_base}
Let $\pi: S \to C$ be a fibration as in Section 2.1. Then
\[
c_k(\CO_{\Delta_S}) \in P^{\pi \times \pi}_kH^{2k}(S\times S , \BQ).
\]
\end{lemma}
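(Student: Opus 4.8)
The plan is to reduce the statement to the Chern character and ultimately to the single class $K_S$, which is where the elliptic hypothesis enters. Recall that under the hypothesis of Theorem \ref{thm1} the perverse filtration associated with $\pi=\pi^{[1]}$ on $H^\ast(S,\BQ)$ is multiplicative, so by Proposition \ref{kunneth} the product filtration associated with $\pi\times\pi$ on $H^\ast(S\times S,\BQ)$ is multiplicative as well. It therefore suffices to prove the sharper statement $\ch_k(\CO_{\Delta_S})\in P^{\pi\times\pi}_kH^{2k}(S\times S,\BQ)$ for every $k$: since $c_k$ is an isobaric weight-$k$ universal polynomial in $\ch_1,\dots,\ch_k$ (each monomial $\ch_{j_1}\cdots\ch_{j_m}$ has $\sum_i j_i=k$), multiplicativity then places $c_k$ in $P^{\pi\times\pi}_k$. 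First I would compute the Chern character by Grothendieck--Riemann--Roch: writing $\Delta\colon S\hookrightarrow S\times S$ for the diagonal embedding, whose normal bundle is $T_S$, one obtains
\[
\ch(\CO_{\Delta_S})=\Delta_\ast\big(\mathrm{td}(T_S)^{-1}\big),\qquad \ch_k(\CO_{\Delta_S})=\Delta_\ast(\tau_{k-2}),
\]
where $\tau_j\in H^{2j}(S,\BQ)$ denotes the degree-$j$ component of $\mathrm{td}(T_S)^{-1}$.

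The next step is the key estimate $\mathfrak{p}^{\pi\times\pi}(\Delta_\ast\beta)\le 2+\mathfrak{p}^{\pi}(\beta)$ for $\beta\in H^\ast(S,\BQ)$. Indeed, by the projection formula $\Delta_\ast\beta=[\Delta_S]\cup\mathrm{pr}_1^\ast\beta$, and $\mathrm{pr}_1^\ast\beta=\beta\boxtimes 1$ has perversity $\mathfrak{p}^\pi(\beta)$ by Proposition \ref{kunneth}; granting $[\Delta_S]\in P^{\pi\times\pi}_2$ and multiplicativity, the estimate follows. To establish $[\Delta_S]\in P^{\pi\times\pi}_2H^4$, I would pick a homogeneous basis $\{e_a\}$ of $H^\ast(S,\BQ)$ adapted to the perverse filtration, with perversities $p_a$, and expand $[\Delta_S]=\sum_{a,b}g^{ab}\,e_a\boxtimes e_b$, where $(g^{ab})$ is the inverse of the intersection matrix $g_{ab}=\langle e_a,e_b\rangle$. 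By relative hard Lefschetz and Poincar\'e--Verdier duality \cite{BBD,dCM1}, the induced pairing $\mathrm{Gr}^P_i\times\mathrm{Gr}^P_j\to\BQ$ is nondegenerate exactly when $i+j=2r(\pi)=2$ and is zero otherwise; hence $g_{ab}\ne 0$ forces $p_a+p_b=2$, the matrix is block-anti-diagonal in the perversity grading, and so is its inverse. Thus every surviving summand satisfies $\mathfrak{p}^{\pi\times\pi}(e_a\boxtimes e_b)=p_a+p_b=2$ by Proposition \ref{kunneth}, giving $[\Delta_S]\in P^{\pi\times\pi}_2$.

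Combining these, $\mathfrak{p}^{\pi\times\pi}(\ch_k(\CO_{\Delta_S}))\le 2+\mathfrak{p}^\pi(\tau_{k-2})$, so it remains to bound $\mathfrak{p}^\pi(\tau_{k-2})$. Here $\tau_{-1}$ and $\tau_{-2}$ vanish (indeed $\ch_0=\ch_1=0$ as $\CO_{\Delta_S}$ is a torsion sheaf of codimension $2$), $\tau_0=1$ has perversity $0$, every $\tau_2\in H^4(S)$ has perversity $\le 2r(\pi)=2$, and $\tau_j=0$ for $j>2$ since $S$ is a surface; these cases yield $\mathfrak{p}^{\pi\times\pi}(\ch_k)\le k$ immediately for $k\ne 3$. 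The only case requiring the geometry is $k=3$, where $\tau_1=-\tfrac12 c_1(T_S)=\tfrac12 K_S$, so the claim reduces to $\mathfrak{p}^\pi(K_S)\le 1$. By the duality above, $P^\pi_1H^2(S)=\big(P^\pi_0H^2(S)\big)^\perp$, and $P^\pi_0H^2(S)=\BQ\cdot[F]$ is spanned by the class of a fiber $F=\pi^\ast[\mathrm{pt}]$; hence $\mathfrak{p}^\pi(K_S)\le 1$ is equivalent to $K_S\cdot F=0$. Taking $F$ a general (smooth, genus-$1$) fiber, adjunction together with $F^2=0$ gives $K_S\cdot F=\deg K_F=2g(F)-2=0$, which is precisely where ellipticity is used. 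I expect this adjunction step to be the crux; the remaining bookkeeping---GRR, the conversion from $\ch_k$ to $c_k$, and the normalization of the duality pairing---is routine. For a non-elliptic fibration $K_S\cdot F=2g-2\ne 0$ and the argument genuinely breaks, consistent with Remark \ref{rmkkkk}.
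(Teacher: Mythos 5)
Your proof is correct and its skeleton coincides with the paper's: reduce via GRR to $\mathrm{ch}_k(\CO_{\Delta_S})=\Delta_\ast\tau_{k-2}$, show that $\Delta_\ast$ raises perversity by at most $2$, and isolate $k=3$ as the only case where ellipticity enters, through the class $K_S=-c_1(S)$. The differences are in how two sub-steps are handled. For the diagonal estimate the paper simply cites \cite[Proposition 3.8]{Z} (after invoking \cite[Proposition 4.17]{Z} for the multiplicativity of $P^\pi$, which is an unconditional fact for surface fibrations --- note your appeal to ``the hypothesis of Theorem \ref{thm1}'' is unnecessary and slightly off, since Lemma \ref{induction_base} carries no such hypothesis); you reprove it via the K\"unneth expansion of $[\Delta_S]$ with the inverse intersection matrix. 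That argument works, but be careful with the claim that the pairing on $\mathrm{Gr}^P_i\times\mathrm{Gr}^P_j$ ``is zero'' for $i+j>2$: for a basis adapted only to the filtration one can only assert vanishing of $g_{ab}$ for $p_a+p_b<2$, i.e.\ the matrix is anti-\emph{triangular} rather than anti-diagonal; this still forces the inverse to be supported on $p_a+p_b\le 2$, which is all you need, and the surviving summands satisfy $p_a+p_b\le 2$ rather than $=2$. For the crux $\mathfrak{p}^\pi(K_S)\le 1$, the paper argues that $c_1(S)$ is \emph{supported on fibers} of $\pi$ (the canonical bundle formula for elliptic fibrations), whereas you use the duality characterization $P^\pi_1H^2(S)=[F]^\perp$ together with adjunction $K_S\cdot F=2g(F)-2=0$. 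Your route is more elementary (no canonical bundle formula) at the cost of relying on the self-duality of the perverse filtration; both correctly identify where the elliptic hypothesis is used, consistently with Remark \ref{rmkkkk}.
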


\begin{proof}
The cases of $k=0,1,4$ are obvious. By \cite[Proposition 4.17]{Z}, the perverse filtration associated with $\pi$ is multiplicative. Hence we can apply \cite[Proposition 3.8]{Z} and obtain that
\[
\mathfrak{p}^{\pi\times \pi}{[\Delta]} = \mathfrak{p}^{\pi\times \pi}(\Delta_\ast [S])  \leq 2.
\]
This concludes the $k=2$ case. It remains to show that 
\begin{equation}\label{eqn123}
c_3(\CO_{\Delta_S}) \in P^{\pi \times \pi}_3H^6(S\times S , \BQ).
\end{equation}
By the Grothendieck--Riemann--Roch formula, the class $c_3(\CO_{\Delta_S})$ is proportional to $\Delta_\ast c_1(S)$. Since a general fiber of $\pi: S \rightarrow C$ is an elliptic curve, the class $c_1(S)$ is supported on fibers of $\pi$, and therefore
\[
\mathfrak{p}^\pi(c_1(S)) \leq 1.
\]
Hence (\ref{eqn123}) is again deduced from \cite[Proposition 3.8]{Z}.
\end{proof}

\begin{rmk}\label{rmkkkk}
The assumption that a general fiber of $\pi: S\to C$ is an elliptic curve is essential for Theorem \ref{thm1} to hold. For example, if we take $S = \BP^1\times \BP^1$, $C=\BP^1$, and $\pi: \BP^1\times \BP^1 \rightarrow \BP^1$ the natural projection, then the perversity of $c_3(\CO_{\Delta_S})$ is 4. Hence Theorem \ref{thm1} breaks down even when $n=1$.
\end{rmk}

Since we suppose that the perverse filtration on $S^{[n]}$ associated with $\pi^{[n]}$ is multiplicative, the K\"unneth decomposition and Proposition \ref{kunneth} imply that the perverse filtration on $S^{[n]}\times S$ associated with 
\[
\bar{\pi}^{[n]}: S^{[n]} \times S \rightarrow C^{(n)}\times C
\]
is also multiplicative. Hence Theorem \ref{thm1} is equivalent to 
\begin{equation*}
    \mathrm{ch}_k(\CO_{Z_{n}}) \in P^{\bar{\pi}^{[n]}}_{k}H^{2k}(S^{[n]}\times S, \BQ).
\end{equation*}
 
 Now we assume that Theorem \ref{thm1} holds for $S^{[n]}$, and we need to prove it for $S^{[n+1]}$. The short exact sequence (\ref{compare_uni_family}) implies that the class 
 \[
 \tilde{p}_{n+1}^\ast \mathrm{ch}_k(\CO_{Z_{n+1}}) \in H^{2k}(S^{[n,n+1]}, \BQ)
 \]
 can be expressed as
 \begin{multline*}
      \tilde{q}_n^\ast \mathrm{ch}_k(\CO_{Z_n}\boxtimes  \CO_S)  + 
      \\  \sum_{k_1+k_2=k} \tilde{q}_n^\ast\mathrm{ch}_{k_1}(\CO_{S^{[n]}}\boxtimes \CO_{\Delta_S})\cup \tilde{q}_{n+1}^\ast \mathrm{ch}_{k_2}(\CO_{S^{[n,n+1]}}(-E_{n+1})\boxtimes \CO_S).
 \end{multline*}

In the following, we analyze every term above with respect to the perverse filtrations. First we have
\begin{equation}\label{ch1}
\mathrm{ch}_k(\CO_{Z_n}\boxtimes  \CO_S) \in P^{\bar{\pi}^{[n]}}_k  H^{2k} (S^{[n]}\times S ,\BQ)
\end{equation}
due to the induction hypothesis and Proposition \ref{kunneth}. Also, Lemma \ref{induction_base} yields
\begin{equation}\label{ch2}
\mathrm{ch}_{k_1}(\CO_{S^{[n]}}\boxtimes \CO_{\Delta_S}) \in P^{\bar{\pi}^{[n]}}_{k_1}  H^{2k_1} (S^{[n]}\times S ,\BQ).
\end{equation}
By the equation (\ref{exceptional}), the class of the exceptional divisor 
\[
E_{n+1} \in H^2(S^{[n,n+1]}, \BQ)
\]
can be written as
\begin{equation} \label{exc3}
{p^\ast_{n+1}} \partial S^{[n+1]} - {q^\ast_n}( \partial S^{[n]} \times [S])
\end{equation}
Hence we obtain from (\ref{ch1}), (\ref{ch2}), (\ref{exc3}), and Lemma \ref{perv_exc} that
\begin{equation}\label{eqn2018}
\tilde{p}_{n+1}^\ast \mathrm{ch}_k(\CO_{Z_{n+1}}) = \tilde{q}_n^\ast \gamma + \sum_i\tilde{p}_{n+1}^\ast \gamma_i' \cup \tilde{q}_n^\ast \gamma_i'' 
\end{equation}
where 
\[
\gamma, \gamma_i'' \in \bigoplus_l P^{\bar{\pi}^{[n]}}_{l} H^{2l} (S^{[n]}\times S ,\BQ), \quad \gamma'_i \in  \bigoplus_l P^{\bar{\pi}^{[n+1]}}_{l} H^{2l} (S^{[n+1]}\times S ,\BQ).
\]
Applying $\tilde{p}_{{n+1}\ast}$ to (\ref{eqn2018}), the projection formula yields
\begin{equation}\label{eqn22222}
(n+1) \mathrm{ch}_k(\CO_{Z_{n+1}}) = \tilde{p}_{{n+1}\ast} (\tilde{q}_n^\ast \gamma) + \sum_i  \gamma_i' \cup \tilde{p}_{{n+1}\ast}(\tilde{q}_n^\ast  \gamma_i'').
\end{equation}

Finally, by Corollary \ref{functoriality} we have
\[
\tilde{q}_n^\ast \alpha \in P^{\bar{g}_n}_m H^l(S^{[n, n+1]}\times S, \BQ)
\]
for any class $\alpha \in P^{\bar{\pi}_n}_mH^l(S^{[n]}\times S, \BQ)$. Therefore
\[
\tilde{p}_{{n+1}\ast}(\tilde{q}_n^\ast \alpha) \in P^{\bar{\pi}_{n+1}}_m H^l(S^{[n+1]}\times S, \BQ).
\]
In particular, we conclude that the right-hand side of (\ref{eqn22222}) lies in 
\[
P^{\bar{\pi}_{n+1}}_k H^{2k}(S^{[n+1]}\times S , \BQ)
\]
due to the multiplicativity of the perverse filtration. We have completed the induction argument. \qed

\section{Parabolic Higgs bundles and the P=W conjecture}

\subsection{Overview}
In this section, we prove the $P=W$ conjecture for the five families of moduli spaces of parabolic Higgs bundles/local systems studied in \cite[Theorems 4.1 and 5.1]{Gr} and \cite[Section 5]{Z}. Our proof of Theorem \ref{P=W} proceeds by the following steps:
\begin{enumerate}
    \item[1.] We show that there exists a {\it canonical} perverse decomposition on the cohomology $H^\ast(M_1,\BQ)$ defined by the explicit geometry of the morphism $\pi:M_1\rightarrow \BC$. We then prove Theorem \ref{thm0.3} for $n=1$.
    \item[2.] We deduce that the tautological classes are the ring generators of $H^\ast(M_n, \BQ)$. This proves Theorem \ref{thm0.3} (i).
    \item[3.] The perverse decomposition on $H^\ast(M_1,\BQ)$ induces a \emph{canonical} perverse decomposition on $H^\ast(M_n, \BQ)$ for every $n$. Following (a refinement of) the proof of Theorem 0.1, we locate the tautological classes in the perverse decomposition for $M_n$ and complete the proof of Theorem \ref{thm0.3}.  
    \item[4.] We calculate the weights of the tautological classes on the character variety $\CM_n'$. As a consequence, we deduce the decomposition (\ref{weight_decomp}).
     \item[5.] Combining the results above, we prove the $P=W$ conjecture for the moduli spaces $M_n$ and $\CM_n'$ as a match of the decompositions
     \[
     \bigoplus_{k,d} G_kH^d(M_n, \BQ) \xlongequal{P=W} \bigoplus_{k,d}  {^k\mathrm{Hdg}^d(\CM'_n)}.
     \]
\end{enumerate}

\subsection{Step 1: 2D moduli spaces and perverse decompositions}    
The two dimensional moduli spaces associated with affine Dynkin diagrams $\tilde{A}_0$, $\tilde{D}_4$, $\tilde{E}_6$, $\tilde{E}_7$, and $\tilde{E}_8$ are constructed in \cite[Theorem 1.1]{Gr}. See also \cite[Page 669-670]{Z} for a summary. For each Dynkin diagram above, there is a finite group $\Gamma$ acting on an elliptic curve $E$. The induced action on the total cotangent bundle $T^\ast E$ gives a quotient stack $[T^\ast E / \Gamma]$, and the moduli space $M_1$ is given by the crepant resolution of the coarse moduli space of this quotient stack
\begin{equation}\label{resolution}
M_1 \to T^\ast E / \Gamma.
\end{equation}
The cohomology groups of $M_1$ and their perverse filtrations associated with $\pi_1: M_1 \to \BC$ are described in \cite[Sections 5.2 and 5.3]{Z}.

In the $\tilde{A}_0$ case, we have
\[
M_1 \simeq T^\ast E = E \times \BC \xrightarrow{\pi_1} \BC
\]
where $\pi_1$ is the projection to the second factor. The perverse filtration associated with $\pi_1$ admits a natural splitting
\[
H^\ast(M_1, \BQ) = \bigoplus_{i=0}^2 G_iH^i(T^\ast E, \BQ)
\]
with $G_iH^i(T^\ast E, \BQ)=H^i(T^\ast E, \BQ)$.

For $\tilde{D}_4$, $\tilde{E}_6$, $\tilde{E}_7$, and $\tilde{E}_8$, the moduli space $M_1$ is nonsingular with trivial canonical bundle. The restriction of the Hitchin fibration $\pi_1: M_1 \to \BC$ to the nonsingular part of $\pi_1$ is 
\[\
\pi_1: \pi_1^{-1}(\BC^\ast) = E \times \BC^\ast \rightarrow \BC^\ast \subset \BC,
\]
while the closed fiber over $0\in \BC$ has the dual graph given by the corresponding affine Dynkin diagram. All the non-trivial cohomology groups of $M_1$ are
\begin{align*}
    H^0(M_1, \BQ) & =  \langle [M_1] \rangle,\\
    H^2(M_1, \BQ) & =  \langle [s], [E_1], \dots, [E_K] \rangle.
\end{align*}
Here $[s]$ is class of a section of $\pi_1$, $[E_i]$ are the exceptional divisors of the resolution (\ref{resolution}), and
\begin{equation}\label{K}
K= 
\begin{cases}
 4, \quad \tilde{D}_4~~~~~ \textrm{case};\\
 6, \quad \tilde{E}_6~~~~~ \textrm{case};\\
 7, \quad \tilde{E}_7~~~~~ \textrm{case};\\
 8, \quad \tilde{E}_8~~~~~ \textrm{case}.
\end{cases}
\end{equation}
We define the following perverse decomposition associated with $\pi_1$:
\begin{align*}
G_0H^0(M_1, \BQ) &= \langle [M_1] \rangle, \\
G_1H^2(M_1, \BQ) &= \langle [E_1], \dots, [E_K] \rangle, \quad G_2H^2(M_1, \BQ) = \langle [s] \rangle.
\end{align*}

In the $\tilde{A}_0$ case, the group $\Gamma$ is trivial, and 
\[
\BP_\Gamma = E = \CI E.
\]

In the $\tilde{D}_4$, $\tilde{E}_6$, $\tilde{E}_7$, and $\tilde{E}_8$ cases, the (ungraded) cohomology groups of the inertia stacks $\CI \BP_\Gamma$, $\CI[T^\ast E/\Gamma]$, and the moduli space $M_1$ can be canonically identified,
\begin{equation} \label{corresp111}
H_{\mathrm{orb}}^\ast(\BP_\Gamma, \BQ) = H^\ast(\CI[T^\ast E/\Gamma], \BQ) = H^\ast(M_1, \BQ).
\end{equation}
Here the first equality is induced by the projection $\CI[T^\ast E/\Gamma]\rightarrow \CI\BP_\Gamma$ and the second equality is given by the McKay correspondence.\footnote{In this case we can also view the second equality as an identification in the $K$-theory with $\BQ$-coefficients.} By definition, we have
\[
H_{\mathrm{orb}}^\ast(\BP_\Gamma, \BQ) = H^\ast(\BP_\Gamma, \BQ) \oplus \bigoplus_{i=1}^{K} \BQ[e_i],
\]
where every $[e_i] \in H_{\mathrm{orb}}^0(\BP_\Gamma, \BQ)$ is given by an element of the isotropy group of a $\Gamma$-fixed point corresponding to the exceptional divisor $E_i \in H^2(M_1, \BQ)$ via $(\ref{corresp111})$. Assume $(\CE_1, \sigma_1)$ is the universal family on $M_1 \times \CI \BP_\Gamma$ obtained as the pullback of the universal Higgs bundle on $M_1 \times \BP_\Gamma$. We first prove Theorem \ref{thm0.3} in the $n=1$ case.

\begin{prop}\label{prop3.1}
Theorem \ref{thm0.3} holds for $n=1$.
\end{prop}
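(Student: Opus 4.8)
The plan is to treat the $\tilde{A}_0$ case and the $\tilde{D}_4,\tilde{E}_6,\tilde{E}_7,\tilde{E}_8$ cases separately, in each computing the tautological classes $\int_\alpha c_k(\CE_1)$ against a basis of $H^\ast_{\mathrm{orb}}(\BP_\Gamma,\BQ)$ and reading off both the generation statement (i) and the perversity statement (ii) of Theorem \ref{thm0.3} directly from the computation. Since $H^\ast(M_1,\BQ)$ is concentrated in degrees $0$ and $1,2$ with $H^{\ge 3}(M_1)=0$, once the tautological classes are shown to span the relevant graded pieces, generation as a ring is immediate.

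For $\tilde{A}_0$, where $\Gamma$ is trivial, $\BP_\Gamma=\CI\BP_\Gamma=E$ and $M_1=T^\ast E=E\times\BC$, the universal Higgs bundle restricts to a pullback of the Poincar\'e line bundle $\CP$ on $E\times E$, with first factor $\mathrm{Pic}^0(E)\cong E$ and second factor $\BP_\Gamma=E$. I would compute $\mathrm{ch}(\CP)=1+c_1(\CP)+\mathrm{ch}_2(\CP)$ in a symplectic basis, where $c_1(\CP)\in H^1\boxtimes H^1$ and $\mathrm{ch}_2(\CP)$ is a nonzero multiple of $[\mathrm{pt}]\boxtimes[\mathrm{pt}]$. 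Integrating against $1$, against $H^1(E)$, and against $[\mathrm{pt}]\in H^2(E)$ then produces exactly the fundamental class in $H^0$, the full $H^1(M_1)$, and the point class in $H^2(M_1)$, while every degree-mismatched integral vanishes. The surviving class $\int_\alpha\mathrm{ch}_k$ has cohomological degree $2k+\deg\alpha-2=k$, and since the perverse decomposition here is $G_i=H^i$, it lies in $G_k$; this gives (ii), and the produced classes span $H^\ast(E)=H^\ast(M_1)$, giving (i).

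For the remaining four diagrams, $M_1$ is the crepant resolution of $T^\ast E/\Gamma$ and the universal bundle descends from the $\Gamma$-equivariant Poincar\'e bundle, so I would split the computation along $\CI\BP_\Gamma=\BP_\Gamma\sqcup(\text{twisted sectors})$ and use the McKay identification (\ref{corresp111}). On the untwisted sector the computation is the $\Gamma$-invariant analogue of the $\tilde{A}_0$ one: integrating $c_0$ against $[\mathrm{pt}]\in H^2(\BP_\Gamma)$ yields $[M_1]\in G_0$, while integrating $\mathrm{ch}_2$ against $1\in H^0(\BP_\Gamma)$ yields a nonzero multiple of the section class $[s]\in G_2H^2$; here I must check that the degree-zero integral $\int_1 c_1$ vanishes and that $\int_{[\mathrm{pt}]}c_1$ carries no section component, so that no class of the wrong perversity appears. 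On each twisted sector, a point with nontrivial isotropy, the integral $\int_{[e_i]}c_1(\CE_1)$ is an evaluation of the orbifold Chern character, and by the definition of $[e_i]$ together with (\ref{corresp111}) it equals, up to a nonzero scalar, the exceptional divisor class $[E_i]\in G_1H^2$, with higher Chern classes vanishing by degree. Collecting these, the classes $[M_1],[s],[E_1],\dots,[E_K]$ form a basis of $H^\ast(M_1,\BQ)$, proving (i), and each arises as some $\int_\alpha c_k$ of perversity exactly $k$, proving (ii).

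The hard part will be the second half of the $\tilde{D}/\tilde{E}$ computation: identifying the twisted-sector tautological classes with the exceptional divisors $[E_i]$ through the McKay correspondence at the level of the universal bundle, while simultaneously verifying that the untwisted contributions yield the section class $[s]$ in perversity $2$ without contaminating the perversity-$1$ piece spanned by the $[E_i]$. This amounts to making the isomorphism (\ref{corresp111}) compatible with the explicitly defined perverse decomposition of Step~1, which is precisely where the geometry of the crepant resolution $M_1\to T^\ast E/\Gamma$ must be used.
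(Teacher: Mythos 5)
Your proposal is correct and follows essentially the same route as the paper: the $\tilde{A}_0$ case is handled by the explicit Chern character of the Poincar\'e line bundle, and the $\tilde{D}_4,\tilde{E}_6,\tilde{E}_7,\tilde{E}_8$ cases by factoring the correspondence $\mathrm{ch}(\CE_1)$ through the $\Gamma$-equivariant Fourier--Mukai transform and the McKay isomorphism, then evaluating on the basis $[\mathrm{pt}]$, $[e_i]$, $[\BP_\Gamma]$ of $H^\ast_{\mathrm{orb}}(\BP_\Gamma,\BQ)$ and matching cohomological degrees to pin down which $\mathrm{ch}_k$ produces $[M_1]$, $[E_i]$, and $[s]$. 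The ``hard part'' you flag (compatibility of the McKay identification with the perverse decomposition) is exactly what the paper resolves by \emph{defining} $G_\bullet H^\ast(M_1,\BQ)$ via the exceptional divisors and the section class and then performing the same direct calculation.
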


\begin{proof}
We first treat the $\tilde{A}_0$ case. The universal line bundle $\CE_1$ on $T^\ast E \times E$ is the pullback of the Poincar\'e line bundle 
\[
\CP = \CO_{E\times E}(\Delta - \mathrm{pt} \times E - E \times \mathrm{pt})
\]
from the projection $M_1 \times E \rightarrow E \times E$. Hence the K\"unneth factor of $c_0(\CP)$ on $M_1$ is the fundamental class $[M_1] \in G_0H^0(M_1, \BQ)$, and the K\"unneth factors of $c_1(\CP)$ are the odd degree classes in $G_1H^1(M_1, \BQ)$. They all lie in the correct pieces of the decomposition $G_\bullet$ and generate the total cohomology $H^\ast(M_1, \BQ)$.

Now we consider the $\tilde{D}_4$, $\tilde{E}_6$, $\tilde{E}_7$, and $\tilde{E}_8$ cases, where the $\Gamma$-group actions are nontrivial. We view $\mathrm{ch}(\CE_1)$ as a correspondence on the cohomology
\begin{equation}\label{corr1234}
\mathrm{ch}(\CE_1): H^\ast(\CI\BP_\Gamma, \BQ) \rightarrow H^\ast(M_1, \BQ).
\end{equation}
By the construction of \cite{Gr}, the correspondence (\ref{corr1234}) can be factorized as
\begin{equation}\label{factor}
H^\ast(\CI \BP_\Gamma, \BQ) \xrightarrow{\Phi_1} H^\ast(\CI [T^\ast E/\Gamma], \BQ) \xrightarrow{\Phi_2} H^\ast(M_1, \BQ). 
\end{equation}
Here $\Phi_1$ is given by the $\Gamma$-equivariant Fourier--Mukai functor induced by the Poincar\'e line bundle
\[
\CP: D^b(E) \rightarrow D^b(T^\ast E),
\]
and $\Phi_2$ is an isomorphism given by the McKay correspondence. In particular, we obtain that (\ref{corr1234}) is an isomorphism. This implies that the tautological classes generate $H^\ast(M_1, \BQ)$, and proves the first part of Theorem \ref{thm0.3}.

For the part (ii), a direct calculation using the factorization (\ref{factor}) gives the following equations: 
\begin{align*}
    \Phi_2 \circ \Phi_1 ([\mathrm{pt} \in \BP_\Gamma]) & = [M_1] \in G_0H^0(M_1, \BQ),\\
    \Phi_2 \circ \Phi_1 \langle [e_1], [e_2] , \dots, [e_K] \rangle & = \langle [E_1], [E_2] , \dots, [E_K] \rangle = G_1H^2(M_1, \BQ), \\
    \Phi_2 \circ \Phi_1 ([\BP_\Gamma]) & = [s] \in G_2 H^2(M_1, \BQ).
\end{align*}
By looking at the cohomological degrees, we see that the three identities above are induced by the factors $\mathrm{ch}_0(\CE_1)$, $\mathrm{ch}_1(\CE_1)$, and $\mathrm{ch}_2(\CE_1)$ respectively. In conclusion, the K\"unneth factors of $\mathrm{ch}_k(\CE_1)$ on $M_1$ lies in $G_kH^\ast(M_1, \BQ)$. This completes the proof of the proposition in the $\tilde{D}_4$, $\tilde{E}_6$, $\tilde{E}_7$, and $\tilde{E}_8$ cases.
\end{proof}

\subsection{Step 2: tautological classes on $M_n$}  We show in this section that the tautological classes 
\begin{equation}\label{eqn34}
\int_{\alpha} c_k(\CE) \in H^\ast(M_n, \BQ), \quad \alpha \in H_{\mathrm{orb}}^\ast(\BP_\Gamma, \BQ)
\end{equation}
generate the cohomology ring $H^\ast(M_n, \BQ)$. It is parallel to a result of Markman \cite{Markman} in the case of (non-parabolic) Higgs bundles on a compact Riemann surface of genus $\geq 2$. However, our proof relies on the structure for the cohomology of the Hilbert scheme of points on a projective surface \cite{LQW}, which is different with the approach in \cite{Markman}.

\begin{proof}[Proof of Theorem \ref{thm0.3} (i)]
{\bf 1. The $\tilde{A}_0$ case.} We first treat the $\tilde{A}_0$ case. For a projective nonsingular surface $S$, recall that the result of Li--Qin--Wang \cite{LQW} implies that the image of the correspondence 
\begin{equation*}
    \mathrm{ch}(\CO_{Z_n}): H^\ast(S, \BQ) \rightarrow H^\ast(S^{[n]}, \BQ)
\end{equation*}
generates the \emph{ring} $H^\ast(S^{[n]}, \BQ)$, where $Z_n$ is the universal subscheme. For our purpose, we take a compactification of $M_1$,
\[
M_1 = T^\ast E \subset E\times \BP^1 = \overline{M}_1,
\]
which induces a natural compactification of $M_n$,
\[
M_n \subset \overline{M}_n=\overline{M}_1^{[n]}.
\]
The restriction map on the cohomology groups
\begin{equation}\label{restr}
H^\ast(\overline{M}_n, \BQ) \xrightarrow{\mathrm{res.}} H^\ast(M_n, \BQ)
\end{equation}
is surjective by the decomposition (\ref{GS_decomp}). We define ${\Xi}'_n \subset \overline{M}_n \times \overline{M}_1$ to be the be the universal subscheme, and let ${\Xi}_n$ be its restriction on $M_n \times \overline{M}_1$. 

By the construction of \cite{Gr}, the derived functor $D^b(E) \to D^b(M_n)$ given by the universal family $\CE_n \in D^b(M_n\times E)$ can be factorized as
\begin{equation}\label{factor31}
D^b(E) \to D^b(M_1) \to D^b(M_n)
\end{equation}
where the first morphism is induced by the universal family $\CE_1$ on $M_1 \times E$, and the second morphism is induced by the structure sheaf of the universal subscheme in $M_n \times M_1 = M_1^{[n]}\times M_1$.\footnote{Although $M_1$ is not proper, the Fourier--Mukai transform $D^b(M_1) \to D^b(M_n)$ is well-defined. This is because the support of $\CO_{\Xi_n}$ in $M_n \times M_1$ is proper over $M_n$.}

We consider the following commutative diagram of correspondences for cohomology groups,
\begin{equation*}
\begin{tikzcd}
H^\ast(E, \BQ) \ar [rr,"\phi_n"]\ar[rd,"\bar{\phi}_1"]& & H^\ast(M_n, \BQ)\\
 &H^\ast(\overline{M}_1, \BQ) \ar[ru,"g_n"]&. 
\end{tikzcd}
\end{equation*}  
Here $\phi_n$ is induced by the functor (\ref{factor31}), and $\bar{\phi}_1$ is induced by the pullback of the Poincar\'e line bundle on $E \times E$ via the natural projection $\overline{M}_1 \to E\times E$, and $g_n$ is induced by $\CO_{{\Xi}_n} \in \mathrm{Coh}(M_n \times \overline{M}_1)$.\footnote{Here the diagram is commutative since the Todd class of $\overline{M}_1$ is supported on the divisor $\overline{M}_1\backslash M_1$.} For the proof of Theorem \ref{thm0.3} (i), it suffices to show that the image of $\phi_n$ generates the cohomology ring $H^\ast(M_n, \BQ)$.

By \cite{LQW} and the surjectivity of the restriction map (\ref{restr}), we observe that the image of $g_n$ generates $H^\ast(M_n, \BQ)$. Moreover, a direct calculation implies that any class in the image of $\bar{\phi}_1$ does not intersect with the divisor 
\[
E \times \{\infty\} = \overline{M}_1 \backslash M_1.
\]
Since the support of the class $\mathrm{ch}(\CO_{{\Xi}_n})$ does not intersect with the locus
\[
(\overline{M}_1 \backslash M_1) \times M_n,
\]
we obtain from the projective bundle formula that 
\[
H^\ast(\overline{M}_1, \BQ) = H^\ast(M_1) \oplus N = \mathrm{Im}(\bar{\phi}_1)\oplus N
\]
where $N$ are the classes supported on $\overline{M}_1\backslash M_1$, and therefore $N \subset  \mathrm{Ker}(g_n)$. As a consequence, we have 
\[
\mathrm{Im}(\phi_n) = \mathrm{Im}(g_n).
\]
This completes the proof in the $\tilde{A}_0$ case.

{\bf 2. The other cases.} The cases of $\tilde{D}_4$, $\tilde{E}_6$, $\tilde{E}_7$, and $\tilde{E}_8$ are similar. The action of $\Gamma$ on an elliptic curve $E$ can be lifted to the total space of the cotangent bundle $T^\ast E$. We consider the induced $\Gamma$-action on the surface $E\times \BP^1$ (viewed as a compactification of the open surface $T^\ast E$). Since $M_1$ is the crepant resolution of the singular surface $T^\ast E/\Gamma$, a nonsingular resolution of the quotient $E\times \mathbb{P}^1/\Gamma$, denoted by $\overline{M}_1$, is then a compactification of $M_1$ with the surjective restriction map
\begin{equation}\label{eqn}
H^\ast(\overline{M}_1, \BQ) \xrightarrow{\mathrm{res.}} H^\ast(M_1, \BQ).
\end{equation}
Geometrically $\overline{M}_1$ contains $M_1$ as an open sub-surface and the boundary is
\[
\overline{M}_{1,{\infty}} = \cup_i^l D_i \cup F
\]
with $D_i$ the exceptional curves. Hence we have
\begin{equation}\label{eqnn35}
H^\ast(\overline{M}_1,\BQ) = H^\ast(M_1, \BQ) \oplus N, \quad N= \langle [D_1], \dots, [D_l], [F], [\mathrm{pt}] \rangle. 
\end{equation}

Similar to the $\tilde{A}_0$ case, we consider the Hilbert scheme $\overline{M}_n= \overline{M}_1^{[n]}$ as a nonsingular compactification of $M_n$. By the surjectivity of (\ref{eqn}) and the decomposition (\ref{GS_decomp}), we obtain that the restriction map 
\[
H^\ast(\overline{M}_n, \BQ) \xrightarrow{\mathrm{res.}} H^\ast(M_n, \BQ)
\]
is also surjective. Since $M_n$ is the moduli space of $\Gamma$-equivariant stable parabolic Higgs bundles on the elliptic curve $(E, o_E)$ \cite{Gr}, the derived functor $D^b([E/\Gamma]) \to D^b(M_n)$ induced by the universal family can be factorized as
\begin{equation}\label{factorization}
D^b([E/\Gamma]) \to D^b(M_1) \to D^b(M_n)
\end{equation}
similar as (\ref{factor31}).

On the level of cohomology, we consider the following commutative diagram of correspondences,
\begin{equation*}
\begin{tikzcd}
H^\ast(\CI\BP_\Gamma, \BQ) \ar [rr,"\phi_n"]\ar[rd,"\bar{\phi}_1"]& & H^\ast(M_n, \BQ)\\
 &H^\ast(\overline{M}_1, \BQ) \ar[ru,"g_n"]&. 
\end{tikzcd}
\end{equation*}  
Here $\phi_n$ is induced by the pullback of the universal family on $M_n\times [E/\Gamma]$ to $M_n\times \CI\BP_\Gamma$, the morphism
\[
\bar{\phi}_1: H^\ast(\CI\BP_\Gamma, \BQ) \rightarrow H^\ast(\CI[T^\ast E/\Gamma], \BQ)=H^\ast(M_1, \BQ) \hookrightarrow H^\ast(\overline{M}_1, \BQ)
\]
is induced by the $\Gamma$-equivariant universal Poincar\'e sheaf on $E\times T^\ast E$ (see Proposition \ref{prop3.1}), and $g_n$ is induced by the structure sheaf $\CO_{\Xi_n}$.

We obtain from (\ref{eqnn35}) that
\[
H^\ast(\overline{M}_1, \BQ) = H^\ast(M_1, \BQ) \oplus N = \mathrm{Im}(\bar{\phi}_1) \oplus N,
\]
where $N$ is generated by the classes supported on $\overline{M}_1 \backslash M_1$. The same proof as in the $\tilde{A}_0$ case implies that
\[
N \subset  \mathrm{Ker}(g_n).
\]
Hence $\mathrm{Im}(\phi_n)= \mathrm{Im}(g_n)$. In particular, we conclude that the image of $\phi_n$ generates $H^\ast(M_n, \BQ)$.
\end{proof}

\subsection{Step 3: perverse decompositions and tautological classes}
By Section 2, the decomposition
\begin{equation}\label{eqn2020}
H^\ast(M_1, \BQ) = \bigoplus_{i=0}^2 G_iH^\ast(M_1, \BQ)
\end{equation}
of Section 3.2 induces the perverse decomposition (\ref{perverse_decomp}) for every moduli space $M_n= M_1^{[n]}$. The purpose of this section is to calculate the location of the tautological classes
(\ref{eqn34}) in the perverse decomposition of $H^\ast(M_n, \BQ)$ and complete the proof of Theorem \ref{thm0.3}.

\begin{lemma}\label{lem3.2}
The perverse decomposition $G_\bullet H^\ast(M_n, \BQ)$ is strongly multiplicative, \emph{i.e.},
\[
G_iH^\ast(M_n,\BQ)\cup G_jH^\ast(M_n,\BQ)\subset G_{i+j}H^\ast(M_n,\BQ).
\]
\end{lemma}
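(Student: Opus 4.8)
The plan is to reduce the strong multiplicativity of the perverse decomposition on $H^\ast(M_n,\BQ)=H^\ast(M_1^{[n]},\BQ)$ to a direct application of Proposition \ref{Prop1.8}. That proposition says precisely that if the chosen perverse decomposition on the surface $H^\ast(M_1,\BQ)$ satisfies condition ($\dagger$) and $M_1$ meets one of the two geometric hypotheses (projective with numerically trivial canonical bundle, or trivial canonical bundle with a nonsingular compactification whose restriction map is surjective), then the induced perverse decomposition on the Hilbert scheme is strongly multiplicative. So the entire task becomes: verify the surface-level input for the explicit $M_1$ attached to each Dynkin diagram.

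First I would handle the geometric hypothesis. In the $\tilde{A}_0$ case, $M_1=T^\ast E=E\times\BC$, which has trivial canonical bundle and admits the nonsingular compactification $\overline{M}_1=E\times\BP^1$ with surjective restriction map (already used in the proof of Theorem \ref{thm0.3}(i)); this is exactly hypothesis (ii). In the $\tilde{D}_4,\tilde{E}_6,\tilde{E}_7,\tilde{E}_8$ cases, Section 3.2 records that $M_1$ is nonsingular with trivial canonical bundle, and Step 2 constructs the nonsingular compactification $\overline{M}_1$ (a resolution of $E\times\BP^1/\Gamma$) with surjective restriction $H^\ast(\overline{M}_1,\BQ)\to H^\ast(M_1,\BQ)$ by (\ref{eqn}), again giving hypothesis (ii). Thus in all five cases the geometric condition of Proposition \ref{Prop1.8} holds.

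Next I would verify condition ($\dagger$) for the explicit decomposition (\ref{eqn2020}) chosen in Section 3.2. For strong multiplicativity at the surface level, I would use the explicit bases: the only classes are $[M_1]\in G_0$, the exceptional divisors $[E_i]\in G_1$, and the section $[s]\in G_2$ (and in the $\tilde A_0$ case the odd classes $H^1\subset G_1$). Since $M_1$ has dimension $2$, all products landing in degree $>4$ vanish, and the surviving products (e.g.\ intersections of divisor classes) land in $H^4=G_2$ with the correct total perversity $\le 2$; this is a finite check class by class, using that exceptional curves and the section sit in fibers or meet fibers in controlled ways. For the small-diagonal condition (2), I would observe that $\Delta_{n\ast}$ raises degree by $2(n-1)$ and, by the definition of the $G_\bullet$ filtration together with Proposition \ref{kunneth}, correspondingly raises perversity by $2(n-1)$; concretely the image of $G_iH^\ast(M_1)$ under $\Delta_{n\ast}$ is detected through the Künneth/stratum description and lands in $G_{i+2(n-1)}H^\ast(M_1^n)$.

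The main obstacle I expect is condition (2), the behavior of the small-diagonal pushforward, rather than the multiplicativity in (1). Ordinary cup-product multiplicativity is nearly automatic here because $\dim M_1=2$ forces most products to vanish for degree reasons, so only a handful of divisor intersections need checking. By contrast, controlling $\Delta_{n\ast}$ requires identifying how the fundamental class of the small diagonal decomposes in the stratified/Künneth description underlying the perverse decomposition of $M_1^n$, and confirming the precise perversity shift of $2(n-1)$. Once condition ($\dagger$) is fully established, the lemma follows immediately by invoking Proposition \ref{Prop1.8}, since $M_n=M_1^{[n]}$ and the perverse decomposition on $H^\ast(M_n,\BQ)$ is exactly the one induced from (\ref{eqn2020}).
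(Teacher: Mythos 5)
Your proposal follows essentially the same route as the paper: the paper's proof is a one-line reduction to Proposition \ref{Prop1.8}, observing that condition ($\dagger$)(1) and hypothesis (ii) are clearly satisfied for $M_1$. The only difference is that for the small-diagonal condition ($\dagger$)(2) — which you correctly identify as the nontrivial input — the paper simply cites \cite[Proposition 0.5]{Z} rather than verifying it directly.
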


\begin{proof}
It suffices to check that the conditions of Proposition \ref{Prop1.8} hold for the surface $M_1$. The conditions ($\dagger$) (1) and (ii) are clearly satisfied, and ($\dagger$) (2) follows from \cite[Proposition 0.5]{Z}.
\end{proof}

Let $\pi': \overline{M}_1 \rightarrow \BP^1$ be the natural projection which compactifies the Hitchin fibration $\pi_1: M_1 \to \BC$. By the description (\ref{eqnn35}), the perverse filtration associated with $\pi'$ admits a canonical splitting 
\begin{equation}\label{splitting12}
H^\ast(\overline{M}_1, \BQ) = \bigoplus_{i=0}^2 G_iH^\ast(\overline{M}_1, \BQ)
\end{equation}
satisfying that 
\[
[\textup{pt}]\in G_2H^4(\overline{M}_1, \BQ),~~~~~~ [F]\in G_0H^2(\overline{M}_1,\BQ),~~~~~~  [D_i] \in G_1H^2(\overline{M}_1, \BQ),
\]
and its restriction to $H^\ast(M_1, \BQ)$ coincides with (\ref{eqn2020}).\footnote{Here we omit the discussion for the $\widetilde{A}_0$ case, since the decomposition (\ref{splitting12}) is obviously obtained for $\pi':\overline{M}_1= E\times \BP^1 \to \BP^1$.}

For notational convenience, if the cohomology of each variety $X_i$ admits a perverse decomposition $G_\bullet H^\ast(X_i, \BQ)$ with respect to a morphism $X_i\to Y_i$, we define 
\begin{multline}
    G_kH^\ast(X_1\times X_2\times \dots \times X_m, \BQ)\\
    =\left\langle \alpha_1\boxtimes \dots \boxtimes \alpha_m; ~~~ \alpha_i \in G_{k_i}H^\ast(X_i, \BQ),~~~ \sum_{i}k_i=m \right\rangle,
\end{multline}
which gives a perverse decomposition for $X_1\times X_2\times \dots \times X_m$ with respect to the Cartesian product
\[
X_1\times X_2\times \dots \times X_m \to Y_1\times Y_2\times \dots \times Y_m
\]
by Proposition \ref{kunneth}. In particular (\ref{splitting12}) induces a perverse decomposition for $M_n \times \overline{M}_1$ with respect to the morphism
\[
\pi_n \times \pi' : M_n \times \overline{M}_1 \to \BC^n \times \BP^1.
\]

Recall that $\Xi_n$ is the restricted universal subscheme of $M_n \times \overline{M}_1$. The following theorem is parallel to Theorem \ref{thm1}. We show it by the induction scheme of Section 2.3.

\begin{thm}\label{Thm3.3}
We have
\[
\mathrm{ch}_k(\CO_{\Xi_n}) \in G_k H^{2k}(M_n \times \overline{M}_1, \BQ).
\]
\end{thm}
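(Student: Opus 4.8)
<br>

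The plan is to mirror the induction scheme used in the proof of Theorem~\ref{thm1}, but working directly with the canonical perverse \emph{decomposition} $G_\bullet$ rather than merely the filtration $P_\bullet$. The induction is on $n$. For the base case $n=1$, the statement $\mathrm{ch}_k(\CO_{\Xi_1}) \in G_kH^{2k}(M_1 \times \overline{M}_1, \BQ)$ must be extracted from Proposition~\ref{prop3.1}, which already computes the K\"unneth factors of $\mathrm{ch}_k(\CE_1)$ and places them in $G_kH^\ast(M_1,\BQ)$; here one uses the compatibility of the splitting (\ref{splitting12}) on $\overline{M}_1$ with the decomposition (\ref{eqn2020}) on $M_1$, together with the observation that $\CO_{\Xi_1}$ is related to $\CE_1$ by the Fourier--Mukai/Poincar\'e geometry of Section~3.2.

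For the inductive step, I would transplant the entire apparatus of Section~2.3 from the surface $S$ to the surface $M_1$, with $\overline{M}_1$ playing the role of the ambient target factor. The key inputs carry over verbatim once their hypotheses are checked for $M_1$: Lemma~\ref{perv_exc} on the perversity of the boundary divisor $\partial M_1^{[n]}$, the relation (\ref{exceptional}) expressing the exceptional divisor $E_{n+1}$ of the nested Hilbert scheme $M_1^{[n,n+1]}$ in terms of boundary divisors, and the short exact sequence (\ref{compare_uni_family}) comparing the universal subschemes $\Xi_{n+1}$ and $\Xi_n$. Pulling back $\mathrm{ch}_k(\CO_{\Xi_{n+1}})$ along $p_{n+1}$ and applying (\ref{compare_uni_family}) expresses $\tilde{p}_{n+1}^\ast\mathrm{ch}_k(\CO_{\Xi_{n+1}})$ as a sum of products of classes pulled back via $\tilde{q}_n$ and $\tilde{p}_{n+1}$, each lying (by the induction hypothesis, the $n=1$ case, and Lemma~\ref{perv_exc}) in the appropriate $G_\bullet$ piece. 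One then applies the Gysin pushforward $\tilde{p}_{n+1\ast}$ and the projection formula to recover $(n+1)\,\mathrm{ch}_k(\CO_{\Xi_{n+1}})$.

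The crucial difference from Theorem~\ref{thm1}, and the reason the statement is phrased with the \emph{decomposition} $G_\bullet$ rather than the filtration $P_\bullet$, is that the final assembly requires the functoriality results of Section~1.4 in their \emph{graded} form. Specifically, I would invoke Propositions~\ref{prop1.4} and \ref{prop1.5} directly (rather than their filtered consequence, Corollary~\ref{functoriality}): the pullback $\tilde{q}_n^\ast$ preserves the $G_\bullet$-decomposition and the pushforward $\tilde{p}_{n+1\ast}$ preserves it, so the composite $\tilde{p}_{n+1\ast}\tilde{q}_n^\ast$ sends $G_k$ to $G_k$. Combined with the \emph{strong} multiplicativity of $G_\bullet H^\ast(M_n,\BQ)$ established in Lemma~\ref{lem3.2}, this forces each term in the expansion of $(n+1)\,\mathrm{ch}_k(\CO_{\Xi_{n+1}})$ into $G_k H^{2k}(M_{n+1}\times\overline{M}_1,\BQ)$, which closes the induction.

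The main obstacle I anticipate is \emph{bookkeeping the graded rather than filtered statement consistently through the nested Hilbert scheme of $\overline{M}_1$}. The decomposition-preserving versions of Propositions~\ref{prop1.4} and \ref{prop1.5} were proven for a general $\pi\colon S\to C$, and one must verify that the fixed splitting (\ref{splitting12}) on $\overline{M}_1$ is precisely the data of (\ref{fixx}) needed to make those propositions applicable, and that it induces the same $G_\bullet$ on $M_n\times\overline{M}_1$ used in the statement. A secondary subtlety is the non-properness of $M_1$: the Fourier--Mukai functor (\ref{factor31}) and the correspondence $\CO_{\Xi_n}$ are defined on the compactification $\overline{M}_1$, and one must ensure that the strong multiplicativity and functoriality inputs—originally formulated for proper or suitably compactifiable surfaces—genuinely apply, using case~(ii) of Proposition~\ref{Prop1.8} and the surjectivity of the restriction map $H^\ast(\overline{M}_n,\BQ)\to H^\ast(M_n,\BQ)$.
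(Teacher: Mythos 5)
Your inductive step is essentially the paper's: transplant the Section~2.3 argument to $M_1^{[n]}\times\overline{M}_1$, replacing filtered statements by graded ones via Propositions~\ref{prop1.4} and \ref{prop1.5}, Lemma~\ref{perv_exc}, and the strong multiplicativity of Lemma~\ref{lem3.2}. The gap is in the base case. Proposition~\ref{prop3.1} concerns the universal Higgs bundle $\CE_1$ on $M_1\times\CI\BP_\Gamma$, i.e.\ the ($\Gamma$-equivariant) Poincar\'e-type kernel, whereas the object you must control is $\CO_{\Xi_1}$, the structure sheaf of the restricted diagonal $\Delta|_{M_1\times\overline{M}_1}$. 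These are different Fourier--Mukai kernels living on different products, and the factorization (\ref{factor31}) relates $\CE_n$ to $\CE_1$ \emph{through} $\CO_{\Xi_n}$; for $n=1$ it degenerates to $\CE_1=\CE_1\circ\mathrm{id}$ and yields no information about $\CO_\Delta$. Knowing the K\"unneth factors of $\mathrm{ch}(\CE_1)$ does not place $\mathrm{ch}_k(\CO_\Delta)$ in $G_k$.

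The base case is precisely where the special geometry enters (compare Remark~\ref{rmkkkk}: for $S=\BP^1\times\BP^1$ the class $c_3(\CO_{\Delta_S})$ has perversity $4$, so the analogous claim already fails at $n=1$ for a general fibered surface). The paper's argument is a direct computation: $c_1(\CO_{\Xi_1})=c_4(\CO_{\Xi_1})=0$; the class $[\Delta]$ is checked to lie in $G_2H^4(\overline{M}_1\times\overline{M}_1,\BQ)$ by pairing against the basis coming from (\ref{eqnn35}); and, by Grothendieck--Riemann--Roch, $c_3(\CO_\Delta)$ is proportional to $\mathrm{pr}_1^\ast c_1(\overline{M}_1)\cup[\Delta]$, which \emph{vanishes} after restriction to $M_1\times\overline{M}_1$ because $c_1(\overline{M}_1)$ is supported on the boundary $\overline{M}_1\smallsetminus M_1$ --- this is where the triviality of $K_{M_1}$ is used. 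Without this input the induction has nothing to start from. The remainder of your outline, including the two cautionary points about the compatibility of (\ref{splitting12}) with the data (\ref{fixx}) and about non-properness of $M_1$, is sound and matches the paper's treatment.
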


\begin{proof}
We first verify the theorem for $n=1$.  The subscheme $Z_1\subset M_1 \times \overline{M}_1$ is the restriction of the diagonal
\[
\Delta \subset \overline{M_1} \times \overline{M_1}.
\]
A direct calculation of its class using the basis (\ref{eqnn35}) implies that
\[
[\Delta] \in G_2H^4(\overline{M_1} \times \overline{M_1}, \BQ).
\]
Hence we obtain via the restriction map (\ref{eqn}) that
\[
c_2(\CO_{Z_1}) \in G_2H^4({M_1} \times \overline{M_1}, \BQ).
\]
Now since 
\[
c_1(\CO_{Z_1}) = c_4(\CO_{Z_1}) =0,
\]
it suffices to show that $c_3(\CO_{Z_1}) \in G_3H^6({M_1} \times \overline{M_1}, \BQ)$. 

In fact, by the Grothendieck--Riemann--Roch formula, the class $c_3(\CO_\Delta)$ is proportional to the class
\[
\mathrm{pr}_1^\ast c_1(\overline{M}_1) \cup [\Delta]  \in H^6(\overline{M_1} \times \overline{M_1}, \BQ).
\]
Its restriction to ${M_1} \times \overline{M_1}$ vanishes due to the fact that $c_1(\overline{M}_1)$ is supported on $\overline{M_1} \backslash M_1$. 

The rest of the proof is exactly the same as the induction argument in Section 2.3. The only minor difference is that, instead of considering the perverse filtration $P_\bullet H^\ast(M_n \times \overline{M}_1)$, we work with the perverse decomposition $G_\bullet H^\ast(M_n \times \overline{M}_1)$. The corresponding comparison results for the pushforward morphism along
\[
M_1^{[n,n+1]} \to M_{n+1}
\]
and the pullback morphism along
\[
M_1^{[n,n+1]} \to M_n \times M_1
\]
are given by Propositions \ref{prop1.4} and \ref{prop1.5}. The precise location of the exceptional divisor in the decomposition $G_\bullet H^\ast(M_n, \BQ)$ is calculated in Lemma \ref{perv_exc}. Finally, the \emph{multiplicativity} of the perverse filtration used in Section 2.3 is replaced by the \emph{strong multicplicativity} for the decomposition $G_\bullet H^\ast(M_n, \BQ)$ (Lemma \ref{lem3.2}).
\end{proof}

For convenience, we introduce the trivial perverse decomposition on the inertia orbifold $\CI \BP_\Gamma$,
\[
G_0 H^\ast(\CI \BP_\Gamma, \BQ) = H^\ast(\CI \BP_\Gamma. \BQ).
\]
By the K\"unneth decomposition, it induces the perverse decompositions $G_\bullet H^\ast(M_n \times \CI\BP_\Gamma, \BQ)$ and $G_\bullet H^\ast(M_n \times \overline{M}_1\times \CI\BP_\Gamma, \BQ)$ which split the perverse filtrations associated with the morphisms
\[
M_n \times \CI\BP_\Gamma \to \BC^n \times \CI\BP_\Gamma
\]
and
\[
M_n \times \overline{M}_1\times \CI\BP_\Gamma \to \BC^n \times \BP^1 \times \CI\BP_\Gamma.
\]
We define the class
\begin{equation*}
    I(\alpha, \gamma) = \int_{\gamma} \mathrm{pr}_{12}^\ast \mathrm{ch}(\CO_{\Xi_n}) \cup \mathrm{pr}_{23}^\ast \alpha \in H^\ast(M_n \times \CI\BP_\Gamma, \BQ)
\end{equation*}
with $\gamma \in H^\ast(\overline{M}_1, \BQ)$, $\alpha \in H^\ast(\overline{M}_1\times \CI\BP_\Gamma, \BQ)$, and $\mathrm{pr}_{ij}$ the projections. Since the Chern character $\mathrm{ch}(\CO_{\Xi_n})$ is supported on the open subset
\[
M_n \times M_1 \subset M_n \times \overline{M}_1,
\]
the class $I(\alpha, \gamma)$ only depends on the restriction of $\alpha$ to $H^\ast(M_1\times \CI\BP_\Gamma, \BQ)$ by the K\"unneth decomposition. Hence $I(\alpha, \gamma)$ is well-defined for \[\alpha \in H^\ast(M_1\times \CI\BP_\Gamma, \BQ).
\]

Let $(\CE_n ,\theta_n)$ denote the universal family on $M_n\times \BP_\Gamma$. For our purpose, it suffices to show that\footnote{For notational convenience, we also use $\mathrm{ch}(\CE_n)$ to denote the pullback of its corresponding class on $M_n\times \BP_\Gamma$ to $M_n\times \CI\BP_\Gamma$.}
\begin{equation}\label{mainformula}
\mathrm{ch}(\CE_n) \in \bigoplus_i G_i H^{2i}(M_n\times \CI\BP_\Gamma, \BQ).
\end{equation}
Since the derived functor induced by the universal family $\CE_n$ on $M_n\times \BP_\Gamma$ is factorized as (\ref{factorization}), we obtain that (\ref{mainformula}) is equivalent to the following condition:
\begin{equation}\label{Integral}
I(\mathrm{ch}(\CE_1), [\overline{M}_1]) \in \bigoplus_i G_i H^{2i}(M_n\times \CI\BP_\Gamma, \BQ).
\end{equation}

In the following, we prove Theorem \ref{thm0.3} (ii) by verifying (\ref{Integral}).

\begin{proof}[Proof of Theorem \ref{thm0.3} (i)]
Assume a variety $X$ admits a perverse decomposition $G_\bullet H^\ast(X, \BQ)$. We call a class $\alpha \in H^\ast(X, \BQ)$ \emph{balanced}, if 
\[
\alpha \in \bigoplus_i G_iH^{2i}(X, \BQ).
\]
By the calculation in the proof of Proposition \ref{prop3.1}, the class
\[
\mathrm{ch}(\CE_1) \in H^\ast(M_1 \times \CI\BP_\Gamma, \BQ)
\]
is balanced. The Chern character $\mathrm{ch}(\CO_{\Xi_n})$ is also balanced by Theorem \ref{Thm3.3}. 

Since $[M_n] \in G_0H^0(M_n,\BQ)$ and $[\CI\BP_\Gamma]\in G_0H^0(\CI\BP_\Gamma,\BQ)$, Proposition \ref{kunneth} implies that the pullbacks $\mathrm{pr}^\ast_{12}$ and $\mathrm{pr}^\ast_{23}$ preserve the perverse decompositions. Hence we deduce from Lemma \ref{lem3.2} that the class
\[
\mathrm{pr}_{12}^\ast \mathrm{ch}(\CO_{\Xi_n}) \cup \mathrm{pr}_{23}^\ast \mathrm{ch}(\CE_1)
\]
is balanced.\footnote{We view $\mathrm{ch}(\CE_1) \in H^\ast({M}_1\times \CI\BP_\Gamma, \BQ)$ as a class in $H^\ast(\overline{M}_1\times \CI\BP_\Gamma, \BQ)$ by the K\"unneth decomposition and $H^\ast(\overline{M}_1, \BQ)= H^\ast({M}_1, \BQ)\oplus N$.} Note that the integration over the fundamental class $[\overline{M}_1]$ is equivalent to picking up the K\"unneth factor corresponding to the point class \[
[\mathrm{pt}] \in G_2H^4(\overline{M}_1, \BQ).
\]
We find that $I(\mathrm{ch}(\CE_1), [\overline{M}_1])$ is balanced.
\end{proof}

\subsection{Step 4: weight filtrations and tautological classes}
In this section we compute the weights of the tautological classes following the method in \cite{Shende}. Let $\CM'_n$ denote the parabolic character variety corresponding to the parabolic Higgs moduli $M_n$ via the nonabelian Hodge theorem; see \cite[Section 5.1]{GNR}, \cite[Theorem 5.1]{Gr}, and \cite[Section 5.1]{Z} for details. By the parabolic nonabelian Hodge theorem \cite[Theorem 7.10]{B2}, the character variety $\CM'_n$ is canonically homeomorphic to $M_n$. The following result is parallel to Theorem \ref{thm0.3} for Higgs bundles.

\begin{thm}\label{thm3.4}
Let $\CE'_n$ be the universal family on $\CM_n'\times \CI\BP_\Gamma$ which is obtained as the pullback of the universal family on $\CM_n'\times\BP_\Gamma$.\footnote{Here the universal family is given by the flat bundles corresponding to the local systems via the Riemann--Hilbert correspondence.} The following statements hold:
\begin{enumerate}
    \item[(i)] The tautological classes
    \[
    \int_\alpha{\mathrm{c}_k(\CE'_n)}\in H^\ast(\CM'_n,\BQ),\quad \alpha \in H_{\mathrm{orb}}^\ast(\BP_\Gamma, \BQ)
    \]
    generate the cohomology ring $H^\ast(\CM_n',\BQ)$.
    \item[(ii)] We have
    \[
    \int_\alpha{\mathrm{c}_k(\CE'_n)}\in {^k\mathrm{Hdg}^{\ast}}(\CM'_n,\BQ)
    \]
    for any $\alpha\in H^\ast_{\mathrm{orb}}(\BP_\Gamma,\BQ)$.
\end{enumerate}

\end{thm}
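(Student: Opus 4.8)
The plan is to prove (i) by transporting the generation statement from the Higgs side, and to prove (ii) intrinsically on the Betti side by a weight computation following the strategy of \cite{Shende}. These two inputs are logically independent and must remain so: part (i) may freely invoke the already-established Theorem \ref{thm0.3}(i), but part (ii) has to be proved \emph{without} reference to the perverse filtration, since matching the weight and perverse data is exactly the content of $P=W$ (Theorem \ref{P=W}) that we are after.

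For part (i), I would argue by transfer. The parabolic non-abelian Hodge correspondence gives a homeomorphism $\CM'_n \cong M_n$, hence a \emph{ring} isomorphism $H^\ast(\CM'_n,\BQ)\cong H^\ast(M_n,\BQ)$. Under the Riemann--Hilbert correspondence the universal flat bundle $\CE'_n$ and the universal Higgs bundle $\CE$ have the same underlying $C^\infty$ bundle on the common total space $\CM'_n\times\BP_\Gamma\cong M_n\times\BP_\Gamma$, so their topological Chern classes agree, and consequently the tautological classes $\int_\alpha c_k(\CE'_n)$ and $\int_\alpha c_k(\CE)$ correspond under the isomorphism. Thus (i) is immediate from Theorem \ref{thm0.3}(i).

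For part (ii), the key Hodge-theoretic input is the structure of $H^\ast(GL_N,\BQ)$ for $N=n|\Gamma|$. As the cohomology of a complex reductive group it is an exterior algebra $\Lambda(x_1,x_3,\dots,x_{2N-1})$, and — this is the heart of \cite{Shende} — the primitive generator $x_{2i-1}\in H^{2i-1}(GL_N)$ has weight $2i$ and Hodge type $(i,i)$, i.e.\ $x_{2i-1}\in {^i\mathrm{Hdg}^{2i-1}}(GL_N)$; already $GL_1=\BC^\ast$, with $H^1(\BC^\ast)$ of weight $2$ and type $(1,1)$, exhibits the weight shift responsible for the non-purity of character varieties. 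Since $\CM'_n$ is a character variety of $\pi_1^{\mathrm{orb}}(\BP_\Gamma)$ into $GL_N$, the universal local system is recorded by its monodromy, which for each generator of $\pi_1^{\mathrm{orb}}(\BP_\Gamma)$ is a \emph{tautologically algebraic} morphism $\mu\colon \CM'_n\to GL_N$. I would then expand the Künneth components of $c_k(\CE'_n)\in H^{2k}(\CM'_n\times\CI\BP_\Gamma)$ along $\CI\BP_\Gamma$: the $H^1(\BP_\Gamma)$-components are pullbacks $\mu^\ast x_{2k-1}$, the $H^2(\BP_\Gamma)$-component is a sum of cup products $\mu_1^\ast x_{2i-1}\cup \mu_2^\ast x_{2j-1}$ with $i+j=k$, and the twisted sectors of $\CI\BP_\Gamma$ contribute only the isotropy eigenvalue data, which are roots of unity and hence weight-$0$, type-$(0,0)$ factors. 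Because pullback and cup product are morphisms of mixed Hodge structures preserving the subspaces ${^\bullet\mathrm{Hdg}}$, and ${^i\mathrm{Hdg}}\cup{^j\mathrm{Hdg}}\subset {^{i+j}\mathrm{Hdg}}$, every such term lands in ${^k\mathrm{Hdg}}$; integrating against $\alpha$ then preserves the membership, giving $\int_\alpha c_k(\CE'_n)\in {^k\mathrm{Hdg}^\ast}(\CM'_n)$. Note that, unlike the Higgs side, this proceeds uniformly in $n$ with no induction via nested Hilbert schemes, since the weight bounds come for free from the target $GL_N$ and are insensitive to the complexity of $\CM'_n$.

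The hard part will be the rigorous identification of the tautological Chern classes with these MHS-pullbacks in the \emph{orbifold and $\Gamma$-equivariant} setting. Concretely, I expect the obstacles to be: (a) the possible non-existence of a strict universal family, i.e.\ the $PGL_N$-gerbe arising when passing from the representation variety to the GIT quotient, which must be handled so that the monodromy morphisms $\mu\colon \CM'_n\to GL_N$ and the class $c_k(\CE'_n)$ are well defined; (b) bookkeeping the $\Gamma$-equivariant structure and the twisted-sector contributions of $\CI\BP_\Gamma$, and checking that they only introduce weight-$0$ factors; and (c) verifying that the monodromy description of the universal local system is genuinely compatible with the mixed Hodge structures, so that the weight and Hodge-type bounds from $H^\ast(GL_N)$ propagate to $H^\ast(\CM'_n\times\CI\BP_\Gamma)$. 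Once these are settled, combining Theorem \ref{thm3.4} with Theorem \ref{thm0.3} yields the decomposition (\ref{weight_decomp}) and its match with the perverse decomposition (\ref{perverse_decomp}), and hence Theorem \ref{P=W}.
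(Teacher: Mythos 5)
Your part (i) is essentially the paper's argument: both proofs transport the generation statement of Theorem \ref{thm0.3}(i) through the non-abelian Hodge homeomorphism after identifying the tautological classes topologically (the paper does this by noting that $\CE'_n$ and $\CE_n$ are pullbacks of $\mathrm{EGL}_n$ along homotopic classifying maps). That part is fine.

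Part (ii), however, has a genuine gap, and it is created by the route you chose. You want to realize the K\"unneth components of $c_k(\CE'_n)$ as pullbacks $\mu^\ast x_{2i-1}$ of the primitive generators of $H^\ast(GL_N,\BQ)$ along monodromy morphisms $\mu\colon \CM'_n\to GL_N$, and then invoke functoriality of mixed Hodge structures. But no such algebraic morphism exists on the character variety: the monodromy of the universal local system at a point of $\CM'_n$ is only a conjugacy class, so $\mu$ is defined at best on the representation variety, not on its quotient $\CM'_n$. Conjugation acts trivially on $H^\ast(GL_N)$, so the \emph{class} $\mu^\ast x_{2i-1}$ can be made sense of, but the MHS functoriality you need requires an actual algebraic map, and descending the weight bound from the representation variety to $\CM'_n$ is an additional step you do not supply. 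On top of this, the explicit transgression/K\"unneth formulas you assert (the $H^1$-components being $\mu^\ast x_{2k-1}$, the $H^2$-component being $\sum\mu_1^\ast x_{2i-1}\cup\mu_2^\ast x_{2j-1}$) are nontrivial claims, delicate in the orbifold/$\Gamma$-equivariant setting, and you list them among the ``obstacles'' without resolving them. The paper avoids all of this by following \cite{Shende} more literally: the universal family defines a map $\CM'_n\times\Delta_{\CI\BP_\Gamma}\to \mathrm{BGL}_n$ of simplicial schemes, which is algebraic because it factors through the moduli stack $\CL oc_n(\BP_\Gamma)$; it is therefore a morphism of mixed Hodge structures, and since $c_k(\mathrm{EGL}_n)\in{}^k\mathrm{Hdg}^{2k}(\mathrm{BGL}_n)$ by Deligne's computation and $H^\ast(\Delta_{\CI\BP_\Gamma},\BQ)$ is pure of weight $0$, the conclusion $\int_\alpha c_k(\CE'_n)\in{}^k\mathrm{Hdg}^\ast(\CM'_n)$ follows with no monodromy maps, no transgression formulas, and no descent argument. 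I would recommend replacing your $GL_N$-based computation by this $\mathrm{BGL}_n$ argument; your observation that the proof is uniform in $n$ with no induction is correct and is indeed a feature of the Betti side.
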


\begin{proof}
Recall from \cite{Gr} that the moduli space $M_n$ parametrizes all triples $(\CF,\theta,v)$, where $(\CF,\theta)$ is a semistable degree 0 Higgs bundle on the orbifold $\BP_\Gamma$ and $v$ is a vector in the fiber $\CF_0$. Here the stability condition refers to that any Higgs sub-bundle containing $v$ has negative degree. 

By forgetting parabolic structures of the objects parametrized by $M_n$, we obtain a morphism $M_n\rightarrow \CH iggs_n(\BP_\Gamma)$, where $\CH iggs_n(\BP_\Gamma)$ denotes
the moduli stack of degree 0 rank $n$ Higgs bundles on $\BP_\Gamma$, or equivalently, the moduli stack of $\Gamma$-equivariant Higgs bundles on $E$ of degree 0 and rank $n$. 


Similarly, we denote by $\CL oc(\BP_\Gamma)$ the moduli stack of $\Gamma$-equivariant rank $n$ local systems on $E$. Let $\Delta_{\CI\BP_\Gamma}$ be a simplicial resolution of the inertia stack $\CI\BP_\Gamma$. We have the following commutative diagrams. 

\[
\begin{tikzcd}
\CM_n'\times \Delta_{\CI\BP_\Gamma}\arrow[r, "homotopic"]\arrow{d} & \CM_n'\times \CI\BP_\Gamma\arrow[r,"homeomorphic"]\arrow[d]&  M_n\times\CI\BP_\Gamma\arrow[d]\\
\CL oc_n(\BP_\Gamma)\times \Delta_{\CI\BP_\Gamma}\arrow{r}\arrow{dr}&\CL oc_n(\BP_\Gamma)\times \CI\BP_\Gamma\arrow[d,"cont."]&  \CH iggs_n(\BP_\Gamma)\times \CI\BP_\Gamma\arrow[dl,"cont."]\\
 &\mathrm{BGL}_n &
\end{tikzcd}
\]
Here the canonical homeomorphism on the top is induced by the parabolic nonabelian Hodge theorem, and the two continuous maps are induced by the universal families. It follows immediately from the construction that $\CE_n'$ (resp. $\CE_n$) is the pullback of $\mathrm{EGL}_n$ from $\mathrm{BGL}_n$ to $\CM'_n\times \CI\BP_\Gamma$ (resp. $M_n\times \CI\BP_\Gamma$). Therefore, the tautological classes are identified via the canonical homeomorphism $\CM'_n \cong M_n$. In particular, the statement (i) follows directly from the identification \[
H^\ast(\CM'_n,\BQ)=H^\ast(M_n,\BQ)
\]
and Theorem \ref{thm0.3} (i).

Now we prove (ii).
The argument in \cite{Shende} proves that the morphism
\[
\CL oc_n(\BP_\Gamma)\times \Delta_{\CI\BP_\Gamma}\rightarrow \textrm{BGL}_n
\]
is algebraic. Note that all the top vertical arrows in the diagrams are forgetful maps, which are also algebraic. Hence we obtain the algebraicity of the morphism
\begin{equation}\label{ALG}
\mathrm{ev}: \CM_n' \times \Delta_{\CI\BP_\Gamma}\rightarrow \textrm{BGL}_n.
\end{equation}

Since simplicial schemes carry mixed Hodge structures which are functorial with respect to algebraic morphisms \cite[Definition 8.3.4, Proposition 8.3.9]{D}, we get the following morphism of mixed Hodge structures from (\ref{ALG}),
\[
H^\ast(\mathrm{BGL}_n,\BQ)\rightarrow H^\ast(\CM'_n,\BQ)\otimes H^\ast(\Delta_{\CI\BP_\Gamma},\BQ).
\]

The cohomology of $\mathrm{BGL}_n$ was calculated in \cite[Theorem 9.1.1]{D},  
\[
H^\ast(\mathrm{BGL}_n,\BQ)=\bigoplus{^k\mathrm{Hdg}^{2k}(\mathrm{BGL}_n)},
\]
with ring generators given by the tautological classes
\[
\mathrm{c}_k(\mathrm{EGL}_n)\in {^k\mathrm{Hdg}^{2k}(\mathrm{BGL}_n)}.
\]
Since $H^\ast(\Delta_{\CI\BP_\Gamma},\BQ)$ is isomorphic to $H^\ast_{\mathrm{orb}}(\BP_\Gamma, \BQ)$ as a $\BQ$-vector space and carries the Hodge structure in which all classes have weight 0, we have
\[
\int_\alpha{\mathrm{c}_k(\CE'_n)} = \int_\alpha \mathrm{ev}^\ast \mathrm{c}_k(\mathrm{EGL}_n)  \in {^k\mathrm{Hdg}^{\ast}}(\CM'_n,\BQ)
\]
for any $\alpha\in H^\ast_{\mathrm{orb}}(\BP_\Gamma,\BQ)$.
\end{proof}

As a corollary, we have the following structural result for the cohomology of $\CM_n'$ which proves (\ref{weight_decomp}).

\begin{cor}
The cohomology group of $\CM_n'$ admits a canonical bi-grading decomposition
\[
H^\ast(\CM'_n, \BQ) = \bigoplus_{k,d}  {^k\mathrm{Hdg}^d(\CM'_n)}.
\]
\end{cor}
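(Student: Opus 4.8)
The plan is to deduce the corollary formally from Theorem \ref{thm3.4} together with the standard structure theory of mixed Hodge structures, so that no further geometric input is needed. Writing ${^k\mathrm{Hdg}^d(\CM'_n)} = W_{2k}H^d(\CM'_n,\BQ)\cap F^kH^d(\CM'_n,\BC)\cap \bar F^kH^d(\CM'_n,\BC)$, the goal is to show that for each fixed $d$ the subspaces ${^k\mathrm{Hdg}^d(\CM'_n)}$ span $H^d(\CM'_n,\BQ)$ and are in direct sum; summing over $d$ then gives the assertion. The first ingredient I would record is that these subspaces are multiplicative: since the cup product on a smooth variety is a morphism of mixed Hodge structures, it respects all three filtrations, giving $W_{2a}\cup W_{2b}\subset W_{2(a+b)}$, $F^a\cup F^b\subset F^{a+b}$, and likewise for $\bar F$. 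Hence
\[
{^a\mathrm{Hdg}^{d_1}(\CM'_n)}\cup {^b\mathrm{Hdg}^{d_2}(\CM'_n)}\subset {^{a+b}\mathrm{Hdg}^{d_1+d_2}(\CM'_n)}.
\]

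Next I would establish spanning. By Theorem \ref{thm3.4}(i) the tautological classes generate $H^\ast(\CM'_n,\BQ)$ as a ring, and by Theorem \ref{thm3.4}(ii) each of them lies in some ${^k\mathrm{Hdg}^d(\CM'_n)}$. Combined with the multiplicativity above, every monomial in the generators, and therefore every cohomology class, lies in $\sum_{k,d}{^k\mathrm{Hdg}^d(\CM'_n)}$. This yields $H^\ast(\CM'_n,\BQ)=\sum_{k,d}{^k\mathrm{Hdg}^d(\CM'_n)}$; in other words the mixed Hodge structure on $H^\ast(\CM'_n,\BQ)$ is generated by Tate classes and is of Hodge--Tate type.

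To promote the sum to a direct sum I would run the dimension count in the spirit of \cite{Shende}. For any mixed Hodge structure, a nonzero class $x\in W_{2k-1}\cap F^k\cap \bar F^k$ leads to a contradiction: taking the top weight $j\le 2k-1$ with $x\in W_j\smallsetminus W_{j-1}$, its image $0\ne\bar x\in \mathrm{Gr}^W_j$ lies in $F^k\cap\bar F^k$ of a \emph{pure} Hodge structure of weight $j$, forcing $j\ge k+k=2k$, which is absurd. Hence $W_{2k-1}\cap F^k\cap\bar F^k=0$, so the natural map ${^k\mathrm{Hdg}^d(\CM'_n)}\to \mathrm{Gr}^W_{2k}H^d(\CM'_n,\BQ)$ is injective and $\dim {^k\mathrm{Hdg}^d}\le \dim \mathrm{Gr}^W_{2k}H^d$. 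Using the spanning of the previous paragraph I then get
\[
\dim H^d = \dim\sum_k {^k\mathrm{Hdg}^d}\le \sum_k\dim {^k\mathrm{Hdg}^d}\le \sum_k\dim \mathrm{Gr}^W_{2k}H^d\le \dim H^d,
\]
where the final inequality holds because the associated graded of the weight filtration has total dimension $\dim H^d$. Every inequality is therefore an equality; equality in the first inequality forces $\sum_k{^k\mathrm{Hdg}^d}$ to be a direct sum equal to $H^d$, and summing over $d$ completes the proof.

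I expect no serious obstacle here: the multiplicativity and the weight-reason injectivity are routine facts about mixed Hodge structures, and the entire substance has already been supplied by Theorem \ref{thm3.4}, which furnishes generators that are simultaneously ring generators and of pure Tate type. The only point requiring care is the directness, where one must check that the chain of dimension inequalities closes up; this implicitly records that the spanning by Tate classes already pins down the full weight filtration (in particular that the odd-weight graded pieces vanish), so that nothing is lost in passing from the sum to the direct sum.
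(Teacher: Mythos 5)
Your proof is correct and follows essentially the same route the paper intends: the corollary is deduced from Theorem \ref{thm3.4} exactly as in \cite{Shende}, by combining ring generation by classes lying in $^k\mathrm{Hdg}^d$ with the multiplicativity of the weight and Hodge filtrations under cup product, and then using the standard vanishing $W_{2k-1}\cap F^k\cap\bar F^k=0$ to get directness. The paper leaves these details implicit, so your write-up is simply a careful expansion of the argument it gestures at.
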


\subsection{Step 5: the P=W conjecture}

\begin{proof}[Proof of Theorem \ref{P=W}]
By Theorems \ref{thm0.3} and \ref{thm3.4}, the tautological classes are the generators of the cohomology ring
\[
H^\ast(\CM'_n,\BQ)=H^\ast(M_n,\BQ),
\]
and they lie in the same piece of the decompositions (\ref{perverse_decomp}) and (\ref{weight_decomp}). Hence Lemma \ref{lem3.2} yields the canonical isomorphism of the decompositions
\[
 \bigoplus_{k,d} G_kH^d(M_n, \BQ) \xlongequal{P=W} \bigoplus_{k,d}  {^k\mathrm{Hdg}^d(\CM'_n)}
 \]
 via the non-abelian Hodge theorem.
\end{proof}

\subsection{Applications: mixed Hodge numbers of character varieties}
For any complex algebraic variety $X$, we define the mixed Hodge polynomial of $X$ as
\[
P(X;x,y,t)=\sum_{p,q,d}\dim (\mathrm{Gr}^W_{p+q}H^d(X,\BC))^{p,q}x^py^qt^d.
\]    
The following result follows immediately from Theorem \ref{P=W} and \cite[Theorem 5.11]{Z}, which concerns the generating series of $P(\CM_n';x,y,t)$. 
\begin{thm}   
Let $q=xy$. In the $\tilde{A}_0$ case, we have
\[
\sum_{n=0}^\infty s^nP(\CM'_n;x,y,t)=\prod_{m=1}^\infty\frac{(1+s^mq^{m}t^{2m-1})^2}{(1-s^mq^{m-1}t^{2m-2})(1-s^mq^{m+1}t^{2m})}.
\]
In the $\tilde{D}_4$, $\tilde{E}_6$, $\tilde{E}_7$, or $\tilde{E}_8$ cases, we have
\begin{multline*}
\sum_{n=0}^\infty s^nP(\CM'_n;x,y,t)\\
=\prod_{m=1}^\infty\frac{1}{(1-s^mq^{m-1}t^{2m-2})(1-s^mq^mt^{2m})^K(1-s^mq^{m+1}t^{2m})},
\end{multline*}
where $K$ is defined in (\ref{K}).
\end{thm}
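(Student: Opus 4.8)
The plan is to transport the computation from the character variety $\CM'_n$ over to the Higgs moduli space $M_n$ by means of the two structural results already established, and then to read the answer off the generating series in \cite{Z}.

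First I would observe that the bi-grading decomposition $H^\ast(\CM'_n,\BQ)=\bigoplus_{k,d}{^k\mathrm{Hdg}^d(\CM'_n)}$ from the preceding corollary forces the mixed Hodge structure on $H^\ast(\CM'_n,\BQ)$ to be of Hodge--Tate type. Indeed, a class in ${^k\mathrm{Hdg}^d(\CM'_n)}=W_{2k}H^d\cap F^kH^d\cap\bar F^kH^d$ has Hodge type $(p,q)$ with $p,q\ge k$, hence weight $\ge 2k$, while membership in $W_{2k}$ bounds its weight by $2k$; thus every such class is pure of weight $2k$ and of Hodge type exactly $(k,k)$. Consequently the only nonzero mixed Hodge numbers of $\CM'_n$ lie on the diagonal $p=q=k$, with
\[
\dim\big(\mathrm{Gr}^W_{2k}H^d(\CM'_n,\BC)\big)^{k,k}=\dim{^k\mathrm{Hdg}^d(\CM'_n)},
\]
and, setting $q=xy$, the mixed Hodge polynomial collapses to
\[
P(\CM'_n;x,y,t)=\sum_{k,d}\dim{^k\mathrm{Hdg}^d(\CM'_n)}\,q^kt^d.
\]

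Next I would substitute the $P=W$ identity of Theorem \ref{P=W}, which gives $\dim{^k\mathrm{Hdg}^d(\CM'_n)}=\dim G_kH^d(M_n,\BQ)$ for all $k$ and $d$. After this substitution $P(\CM'_n;x,y,t)$ becomes precisely the perverse Poincar\'e polynomial of $M_n$,
\[
\sum_{k,d}\dim G_kH^d(M_n,\BQ)\,q^kt^d,
\]
recorded with respect to the perverse decomposition (\ref{perverse_decomp}) for the Hitchin map $\pi_n$. At this point the problem concerns only the perverse numbers of the Hilbert schemes $M_n=M_1^{[n]}$. Finally I would invoke \cite[Theorem 5.11]{Z}, which evaluates the generating series $\sum_{n\ge0}s^n\sum_{k,d}\dim G_kH^d(M_n,\BQ)\,q^kt^d$ for these five families through a perverse refinement of G\"ottsche's formula: in the $m$-th Euler factor each cohomology class of $M_1$ contributes with its degree shifted by $2(m-1)$ and its perversity shifted by $m-1$, even classes entering the denominator and odd classes the numerator. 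Reading off the perverse decomposition of $M_1$ from Step 1, the class $[M_1]\in G_0H^0$ yields $(1-s^mq^{m-1}t^{2m-2})$, the section class $[s]\in G_2H^2$ yields $(1-s^mq^{m+1}t^{2m})$, the two odd classes spanning $G_1H^1(M_1)\cong H^1(E,\BQ)$ yield the numerator $(1+s^mq^mt^{2m-1})^2$ in the $\tilde A_0$ case, and the $K$ exceptional classes $[E_1],\dots,[E_K]\in G_1H^2$ yield the factor $(1-s^mq^mt^{2m})^K$ in the remaining four cases. This reproduces the two stated product formulas and completes the proof.

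Since every ingredient is already in hand, no genuine obstacle remains; the present theorem is the numerical shadow of Theorem \ref{P=W} and the perverse G\"ottsche computation of \cite{Z}. The one step that must be \emph{verified} rather than quoted is the Hodge--Tate collapse of the first paragraph — that the direct-sum statement of the corollary annihilates all off-diagonal Hodge numbers and all odd weights — and this is immediate from the shape of the definition of ${^k\mathrm{Hdg}^d}$.
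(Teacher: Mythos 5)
Your proposal is correct and follows the same route as the paper, which simply derives this theorem from Theorem \ref{P=W} together with the perverse--G\"ottsche generating series of \cite[Theorem 5.11]{Z}; the Hodge--Tate collapse you verify in your first paragraph is exactly the (implicit) content of the decomposition (\ref{weight_decomp}), and your reading of the Euler factors from the perverse decomposition of $H^\ast(M_1,\BQ)$ in Step 1 matches the stated products. No gaps.
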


In general, mixed Hodge numbers for character varieties are difficult to compute, since the description of a character variety as an affine GIT quotient is not adapted to the computation of mixed Hodge structures. As a corollary of the $P=W$ identity, we are able to obtain closed formulas for the mixed Hodge numbers in the five families of parabolic character varieties using the corresponding perverse filtrations. Our result matches the conjecture proposed by Hausel, Letellier, and Rodriguez-Villegas \cite[Conjecture 1.2.1]{HLR} for the first few values of $n$.\footnote{This is verified numerically by the computer program. However there are still combinatorial difficulties to prove that our formulas coincide with the conjecture in \cite{HLR} as closed formulas; see \cite[Remark 5.13]{Z}.}

\end{document}